\documentclass[11pt]{article}

\usepackage{amssymb}
\usepackage{amsfonts}
\usepackage{amsmath}
\usepackage[authoryear,round]{natbib}
\usepackage[dvipsnames]{xcolor}
\usepackage{authblk}
\usepackage[a4paper,textwidth=16cm,textheight=24cm]{geometry}
\usepackage{graphicx}
\usepackage{enumerate}

\newtheorem{theorem}{Theorem}[section]
\newtheorem{proposition}[theorem]{Proposition}
\newtheorem{definition}[theorem]{Definition}
\newtheorem{corollary}[theorem]{Corollary}

\newtheorem{lemma}[theorem]{Lemma}
\newtheorem{remark}[theorem]{Remark}
\newtheorem{example}[theorem]{Example}
\newenvironment{proof}[1][Proof]{\textbf{#1. }}{\hfill \rule{0.55em}{0.5em}\newline}

\newcommand{\stkout}[1]{\ifmmode\text{\sout{\ensuremath{#1}}}\else\sout{#1}\fi}
\newcommand{\Fbar}{\overline{F}}

\newcommand{\Tbar}{\overline{T}}

\newcommand{\nsG}{G_{\beta,\theta}}

\newcommand{\leqst}{\leq_{st}}

\newcommand{\geqst}{\geq_{st}}

\newcommand{\PR}{\mathcal{P}(\mathbb{R})}
\newcommand{\ToMO}{T_o}
\newcommand{\balpha}{\boldsymbol{\alpha}}
\newcommand{\bbeta}{\boldsymbol{\beta}}

\begin{document}

\title{Stochastic Ordering of Dependent Systems under Transformation Models and Archimedean Copulas}
\date{}
\author[1]{Idir Arab\thanks{idir.bhh@gmail.com, ORCID: 0000-0002-9408-684X}}

\author[2]{Milto Hadjikyriakou\thanks{MHadjikyriakou@uclan.ac.uk, ORCID: 0000-0001-5672-7792}}

\author[1]{Paulo Eduardo Oliveira\thanks{Email: paulo@mat.uc.pt, ORCID: 0000-0001-7217-5705}\thanks{P.E.O. acknowledges financial support by the Centre for Mathematics of the University of Coimbra (CMUC, https://doi.org/10.54499/UID/00324/2025)
under the Portuguese Foundation for Science and Technology (FCT),
Grants UID/00324/2025 and UID/PRR/00324/2025.}
}

\affil[1]{CMUC, Department of Mathematics, University of Coimbra, Portugal}

\affil[2]{School of Sciences, University of Central Lancashire, Cyprus}

\maketitle

\begin{abstract}
We study stochastic ordering of system lifetimes with dependent and heterogeneous components whose marginal distributions are obtained through transformations of a common baseline. The dependence structure is modeled via Archimedean copulas, allowing for a unified treatment of several transformation-based models, including proportional hazard, proportional reversed hazard rate and proportional odds families. For parallel, series and $(n-k)$-out-of-$n$ systems, we derive conditions for stochastic dominance based on monotonicity of the transformation and structural properties of the copula generators, formulated through super-additivity and Schur-type arguments.

The results provide tractable criteria that extend existing comparisons beyond independence and illustrate the combined effect of dependence and parameter heterogeneity on system reliability.
\end{abstract}

\noindent
\textbf{Keywords}: Stochastic dominance, Archimedean copula, Majorization, Complex system

\smallskip

\noindent
\textbf{MSC 2020 Classification}: 60E15, 60E05, 62N05

\section{Introduction}

Comparing the behaviour of lifetime distributions between complex systems has attracted considerable interest due to its applications in reliability and the overall performance of devices constructed from multiple components. It is clear that the lifetime of each component reflects on the collective behaviour of the system. Moreover, the way components interact with each other—either due to the system’s physical properties or statistical dependence between components—is crucial for characterising the system itself. Since lifetimes are random, any comparison must capture the stochastic nature of the problem and be based on an appropriate version of a stochastic order between the random variables representing the lifetimes of interest.

The basic notions and properties of stochastic orders may be found in the monograph by \cite{shaked2007}. Depending on the structure of a particular complex system, its lifetime may be described using order statistics derived from the component lifetimes. Consequently, one is often interested in the stochastic comparison of order statistics. A review of relevant results can be found in \cite{kochar2012} and \cite{KocharXu2013}, with the latter studying the effect of dependence between components on the overall lifetime and how to exert some control over that dependence.

The physical properties of a system are typically reflected in how the lifetimes of its individual components determine the overall system lifetime. Among the most widely studied configurations are parallel and series systems, where the system operates if at least one component is functioning (parallel) or only if all components are functioning (series). Intermediate configurations lead to the class of $(n-k)$-out-of-$n$ systems, in which the system continues to operate as long as at least $n-k$ of the $n$ components remain alive. There is an extensive literature on $(n-k)$-out-of-$n$ systems, which is closely tied to the properties of $(n-k)$-th order statistics. Most of this work assumes independent component lifetimes, due to the mathematical challenges associated with deriving order statistic distributions in the presence of dependence.

Focusing on parallel or series systems, it is not surprising that the first results in the literature compared such systems with respect to several stochastic orderings, assuming independent components with specific lifetime distributions, as in \cite{DykstraKochar1997} or \cite{KhalediKochar2000} for exponentially distributed components. More recently, \cite{ZhaoZhangQiao2016} and \cite{NaqviDingZhao2022} considered other distribution families for the component lifetimes. A different type of result, characterising non-comparability for a specific criterion, may be found in \cite{NonComp2020}, which provides results for the proportional hazard rates (PHR) and proportional reversed hazard rates (PRHR) models that we will consider later.

The stochastic comparison of systems with dependent, possibly heterogeneous, components has been addressed using characterisations of joint distributions described through specific copula functions, in particular Archimedean copulas. In this direction, \cite{LF2015} proved stochastic comparison results when component lifetimes follow a PHR model; \cite{PanjaKunduPradhan2021} considered stochastic domination with components lifetime distributions from a proportional odds model; and the PRHR model has been studied in \cite{Datta_Gupta2024}, among other models for the individual distributions, with emphasis in comparing systems with respect to dispersive or hazard rate orders. Further related results for distorted distributions and coherent systems may be found in \cite{navarro2017}, while \cite{AmiriBalaJama2022} and \cite{A-S-Kh_Iza2024} provide results for more specific families of component lifetimes. 

In this paper, we develop a general framework for stochastic comparison of system lifetimes with dependent and heterogeneous components, whose marginal distributions arise from transformations of a common baseline and whose dependence structure is described by Archimedean copulas. This setting includes several widely used models, such as the PHR and PRHR families, within a unified formulation. For parallel and series systems, we establish stochastic ordering results based on monotonicity properties of the transformation and structural conditions on the copula generators. We further consider $(n-k)$-out-of-$n$ systems, for which a different approach is required, deriving explicit representations of the system lifetime distribution under Archimedean dependence and obtaining stochastic ordering results when the joint distribution is described through the same copula. The applicability of the proposed framework is illustrated through several transformation models, highlighting the combined effect of dependence and parameter heterogeneity on system reliability.

\section{Notations and some preliminary results}
We now introduce some general concepts and terminology to be used in the sequel. The set of all cumulative distribution functions in $\mathbb{R}$ is denoted as $\PR$. Given a random variable $X$ we represent with $F_X(\in\PR)$ and $\Fbar_X$ its cumulative distribution function and survival function, respectively, and may denote this as $X\sim F_X$. 
Analogously, given a $n$-dimensional random vector $\mathbf{X}=(X_1,\ldots,X_n)$ the distribution function is represented by $F_{(X_1,\ldots,X_n)}$, while the marginals are denoted as $F_{X_i}$, $i=1,\ldots,n$. Further, we shall represent as $\Fbar_{(X_1,\ldots,X_n)}(x_1,\ldots,x_n)={\rm P}(X_1>x_1,\ldots,X_n>x_n)$. Recall that a real valued function $g$ is super-additive if, for every $x,\,y\geq 0$, $g(x+y)\geq g(x)+g(y)$. It is well known that if $g(0)\leq 0$, then the convexity of $g$ implies its super-additivity.

We recall the definition of stochastic dominance, the key comparison notion between random variables that will be studied.

\begin{definition}
\label{def:st}
Consider two random variables $X_1\sim F_{X_1}$ and $X_2\sim F_{X_2}$. We say that $X_1$ is smaller than $X_2$ in the usual stochastic order, denoted by $X_1\leqst X_2$, if $\Fbar_{X_1}(x) \leq \Fbar_{X_2}(x)$ for every $x \in \mathbb{R}$. 
\end{definition}

\subsection{Joint distributions and copula representations}
\label{subsec:copulas}
{To study systems with dependent components, we introduce a copula-based representation of joint distribution functions}. It is well known, from the Sklar's Theorem (see \cite{Sklar59}), that, given a random vector $\mathbf{X}=(X_1,\ldots,X_n)$, there exist $n$-dimensional distribution functions, $C$ and $\widehat{C}$, with uniform marginals such that
\begin{equation}
\label{eq:cop-rep}
\begin{array}{l}
\displaystyle F_{(X_1,\ldots,X_n)}\left(x_1, \ldots, x_n\right)=C\left(F_{X_1}(x_1), \ldots, F_{X_n}(x_n)\right), \\[2ex]
\displaystyle \Fbar_{(X_1,\ldots,X_n)}\left(x_1, \ldots, x_n\right)=\widehat{C}\left(\Fbar_{X_1}(x_1), \ldots, \Fbar_{X_n}(x_n)\right).
\end{array}
\end{equation}
The functions $C$ and $\widehat{C}$ are the \textit{copula} and \textit{survival copula}, respectively, of the vector $\mathbf{X}$. A particular class of copulas, introduced next, is of special interest. We need to recall a preparatory notion, before the actual definition of the class of interest.
\begin{definition}
A real function $f$ is called $d$-monotone in $(a, b)$, where $a, b \in \mathbb{R}$ and $d \geq 2$, if it is differentiable there up to the order $d-2$ and the derivatives satisfy $(-1)^{i} f^{(i)}(x) \geq 0$, $i=0,1, \ldots, d-2$ and $(-1)^{d-2} f^{(d-2)}(x)$ is nonincreasing and convex in $(a, b)$.
\end{definition}

\begin{definition}
\label{def:arch}
Suppose $\psi:[0, \infty) \rightarrow[0,1]$ is a continuous $d$-monotone function with $\psi(0)=1$ and $\lim_{x\to \infty}\psi(x)=0$. 
Then, a copula $C_{\psi}$ is called an Archimedean copula with generator $\psi$ if it can be written as $C_{\psi}\left(v_{1}, \ldots, v_{n}\right)=\psi\left(\phi\left(v_{1}\right)+\cdots+\phi\left(v_{n}\right)\right)$ for all $v_{i} \in[0,1]$, $i =1, \ldots, n$, 
where $\phi=$ $\psi^{-1}=\sup \{x \in \mathbb{R}: \psi(x)>v\}$ is the right continuous inverse of the generator $\psi$.
\end{definition}
\noindent In the sequel, we focus on several well–known Archimedean copula families that are widely used in dependence modelling and reliability analysis. For convenience, we summarise their generators, inverse generators and parameter ranges in Table~\ref{tab:archimedean}. 
\begin{table}[h!]
	\centering
	\caption{Generators, inverse generators and parameter ranges of selected Archimedean copulas.}
	\label{tab:archimedean}
	\medskip
	\begin{tabular}{p{3cm}lll}
		\hline
		Copula & Generator $\psi(t)$ & Inverse $\phi(u)=\psi^{-1}(u)$ & Parameter range \\
		\hline
		Independence 
		& $e^{-t}$ 
		& $-\log u$ 
		& none \\[6pt]
		
		Clayton 
		& $(1+\gamma t)^{-1/\gamma}$ 
		& $\dfrac{1}{\gamma}(u^{-\gamma}-1)$ 
		& $\gamma \ge -1,\ \gamma \neq 0$ \\[10pt]

		Frank
        & $-\frac1\gamma\log\left(e^{-t}(e^{-\gamma}-1)+1\right)$
        & $-\log\left(\tfrac{e^{-\gamma u}-1}{e^{-\gamma}-1}\right)$
		& $\gamma\not=0$
        \\[10pt]
		
		Gumbel 
		& $\exp(-t^{1/\gamma})$ 
		& $(-\log u)^{\gamma}$ 
		& $\gamma \ge 1$ \\[10pt]
		
		Ali--Mikhail--Haq (AMH) 
		& $\dfrac{1-\gamma}{e^{t}-\gamma}$ 
		& $\log\left(\dfrac{1-\gamma(1-u)}{u}\right)$ 
		& $\gamma \in (-1,1)$ \\
		\hline
	\end{tabular}
\end{table}

The following ordering result {for copulas is} of interest.
\begin{lemma}[\cite{LF2015}, Lemma~A.1]
\label{cop}
For two $n$-dimensional Archimedean copulas $C_{\psi_1}$ and $C_{\psi_2}$, if $\phi_2 \circ \psi_1$ is super-additive, then
$C_{\psi_1}(\boldsymbol{u}) \leq C_{\psi_2}(\boldsymbol{u})$, for $\boldsymbol{u} \in [0,1]^n$.
\end{lemma}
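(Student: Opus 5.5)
The plan is to reduce the comparison of the two copulas to a single inequality between the scalar arguments fed into one generator, using super-additivity of $\phi_2\circ\psi_1$ applied repeatedly. Fix $\boldsymbol{u}=(u_1,\ldots,u_n)\in[0,1]^n$. Write $x_i=\phi_1(u_i)\in[0,\infty]$, so that $C_{\psi_1}(\boldsymbol{u})=\psi_1(x_1+\cdots+x_n)$. If some $u_i=0$, both copulas vanish, since every copula is grounded. We may therefore assume all $u_i>0$, so that $x_i<\infty$. Since $\psi_1$ is continuous and strictly decreasing where it is positive, $\psi_1(\phi_1(u_i))=u_i$ for $u_i\in(0,1]$.

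Put $g=\phi_2\circ\psi_1:[0,\infty)\to[0,\infty]$. Super-additivity, extended by induction on the number of summands, gives
\begin{equation*}
g(x_1+\cdots+x_n)\;\geq\; g(x_1)+\cdots+g(x_n)=\phi_2(u_1)+\cdots+\phi_2(u_n).
\end{equation*}
The induction step reads $g(\sum_{i\le m+1}x_i)\ge g(\sum_{i\le m}x_i)+g(x_{m+1})$.

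Now apply $\psi_2$ to both sides. Since $\psi_2$ is nonincreasing, the inequality reverses:
\begin{equation*}
\psi_2\bigl(\phi_2(\psi_1(x_1+\cdots+x_n))\bigr)\;\leq\;\psi_2\bigl(\phi_2(u_1)+\cdots+\phi_2(u_n)\bigr)=C_{\psi_2}(\boldsymbol{u}).
\end{equation*}
It remains to check that the left-hand side equals $\psi_1(\sum x_i)=C_{\psi_1}(\boldsymbol{u})$. This uses $\psi_2\circ\phi_2(v)=v$ for $v\in[0,1]$. That identity holds because $\psi_2$ is continuous, maps onto $[0,1]$, and $\phi_2$ is its right-continuous generalized inverse.

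The only delicate point is the boundary behaviour of the generalized inverses. When $\psi_2$ has bounded support, $\phi_2(0)$ equals the left endpoint of the set where $\psi_2$ vanishes, and the identity $\psi_2(\phi_2(0))=0$ still holds by continuity. When $C_{\psi_1}(\boldsymbol{u})=0$, the claimed inequality is trivial anyway. With this boundary check, the result follows for all $\boldsymbol{u}$.
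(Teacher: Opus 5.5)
Your proof is correct. Iterating super-additivity of $\phi_2\circ\psi_1$ at the points $\phi_1(u_i)$ and then applying the nonincreasing $\psi_2$ with $\psi_2\circ\phi_2=\mathrm{id}$ on $[0,1]$ yields $C_{\psi_1}(\boldsymbol{u})\le C_{\psi_2}(\boldsymbol{u})$, and you handle the boundary cases $u_i=0$ and $C_{\psi_1}(\boldsymbol{u})=0$ properly. The paper does not prove this lemma itself but quotes it from Li and Fang (2015), and your argument is the standard direct proof of that result; you implicitly use the convention $\phi(1)=0$, which the paper's literal definition $\sup\{x:\psi(x)>v\}$ leaves undefined at $v=1$, but this is harmless.
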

We show a few examples illustrating the scope of applicability of Lemma~\ref{cop}.
\begin{example}
Assume that $\psi_1(t) = e^{-t}$, the generator of the independence copula. {For generators $\psi_2$ within the above-mentioned families, the super-additivity assumption holds for a wide range of parameter values}.
    \begin{enumerate}
    \item
    The Clayton case with $\gamma_2\geq0$.
    We have that $\phi_2\circ\psi_1(t) = \frac{1}{\gamma_2} \left(e^{\gamma_2 t} - 1\right)$, which is easily seen to be convex. As $\phi_2\circ\psi_1(0)=0$, the super-additivity follows. 
    
    \item
    The Frank case with $\gamma_2>0$. For this copula we find
    $$
    \phi_2\circ\psi_1(t)=-\log\left(\frac{e^{-\gamma_2 e^{-t}}-1}{e^{-\gamma_2}-1}\right),
    $$
    that is easily seen to be convex for $t\geq 0$, hence, as $\phi_2\circ\psi_1(0)=0$, the super-additivity follows.
    
    \item
    The Gumbel case with $\gamma_2\geq1$. Then
	$\phi_2\circ\psi_1(t)=(-\log e^{-t})^{\gamma_2}=t^{\gamma_2}$,
    which is super-additive on $[0,\infty)$.
    
    \item
    The AMH case with $\gamma_2\in(0,1)$.
    Then
	$$
	\phi_2\circ\psi_1(t)
	=\log\left(\frac{1-\gamma_2(1-e^{-t})}{e^{-t}}\right)
	=\log\left((1-\gamma_2)e^{t}+\gamma_2\right).
	$$
	Again, we find a convex function such that $\phi_2\circ\psi_1(0)=0$, thus the super-additivity follows.
    \end{enumerate}
\end{example}
	
\begin{example}
\label{ex:fams}
Consider now two generators $\psi_1$ and $\psi_2$ from the same family of copulas, with parameters $\gamma_1$ and $\gamma_2$. With the appropriate range of variability for the parameters, the composition $\phi_2\circ\psi_1$ is still super-additive.
    \begin{enumerate}
    \item
    The Clayton family with $\gamma_2\geq\gamma_1>0$. In this case, we have $\phi_2\circ\psi_1(t) = \frac{1}{\gamma_2}\left((1+\gamma_1 t)^{\gamma_2/\gamma_1} - 1\right)$. For the given choice of parameters, $\phi_2\circ\psi_1$ is convex so, as $\phi_2\circ\psi_1(0)=0$, this function is super-additive.
    
    \item
    The Frank copulas with $\gamma_2\geq\gamma_1$. A direct calculation gives
    $$
    \phi_2\circ\psi_1(t)=-\log\left(\frac{\left(e^{-t}(e^{-\gamma_1}-1)+1\right)^{\gamma_2/\gamma_1}-1}{e^{-\gamma_2}-1}\right),
    $$
     which is easily seen to be convex when $\gamma_2\geq\gamma_1$, hence, again as $\phi_2\circ\psi_1(0)=0$, it follows that $\phi_2\circ\psi_1$ is super-additive.
    
    \item
    The Gumbel family with $\gamma_2\geq\gamma_1\geq 1$. We find that $\phi_2 \circ \psi_1(t)= t^{\gamma_2/\gamma_1}$ which, being convex and null at the origin, is super-additive.
    
    \item
    The AMH copulas with $1\geq\gamma_2\geq\gamma_1\geq0$. Some simple algebra shows that
    $$
    \phi_2\circ\psi_1(t)
	=\log\left(\frac{1-\gamma_2}{1-\gamma_1}e^{t}+\frac{\gamma_2-\gamma_1}{1-\gamma_1}\right)
	=\log\big(ae^{t}+b\big),
    $$
    where $a=\frac{1-\gamma_2}{1-\gamma_1}$ and $b=\frac{\gamma_2-\gamma_1}{1-\gamma_1}$, so $a+b=1$.
	As $\gamma_2\geq\gamma_1$, it follows that $a\in(0,1]$ and $b\in[0,1)$. Therefore, for $x,y\ge 0$ we have
	$$
	\log(ae^{x+y}+b)\ge \log(ae^{x}+b)+\log(ae^{y}+b)
	\quad\Leftrightarrow\quad 
	ae^{x+y}+b\ge (ae^{x}+b)(ae^{y}+b).
	$$
	Expanding and rearranging the terms,
	$$
	ae^{x+y}+b-(ae^{x}+b)(ae^{y}+b)
	=a(1-a)\big(e^{x+y}-e^{x}-e^{y}+1\big)
	=a(1-a)(e^{x}-1)(e^{y}-1)\ge 0,
	$$
	which proves super-additivity in this case as well.
    \end{enumerate}
\end{example}

\subsection{Schur-convexity}
Finally, we recall the main tool to derive the stochastic dominance between different distributions {depending on a} family of parameters.

\begin{definition}
\label{def:Schur}
Let $\boldsymbol{\alpha}=(\alpha_1,\ldots,\alpha_n),\,\boldsymbol{\beta}=(\beta_1,\ldots,\beta_n)\in\mathbb{R}^n$.
    \begin{enumerate}
    \item (\cite{marshall2011}, Definition~1.A.1)
     The vector $\boldsymbol{\alpha}$ is said to be majorized by another vector $\boldsymbol{\beta}$, denoted by $\boldsymbol{\alpha} \overset{m}{\preceq}\boldsymbol{\beta}$, if for each $\ell=1, \ldots, n-1$, we have $\sum_{i=1}^{\ell} \alpha_{i:n} \geq \sum_{i=1}^{\ell} \beta_{i:n}$ and $\sum_{i=1}^{n} \alpha_{i:n}=\sum_{i=1}^{n} \beta_{i:n}$, where $\alpha_{i:n}$ and $\beta_{i:n}$ represent the coordinates of $\boldsymbol{\alpha}$ and $\boldsymbol{\beta}$, respectively, ordered increasingly.
    \item (\cite{marshall2011}, Definition~3.A.1)
	A function $h:A\subseteq \mathbb{R}^{n} \rightarrow \mathbb{R}$ is said to be Schur-convex (Schur-concave) on $A$ if
	$$
	\boldsymbol{\alpha} \overset{m}\succeq \boldsymbol{\beta} \Rightarrow h(\boldsymbol{\alpha}) \geq(\leq)\; h(\boldsymbol{\beta}),
    \quad \text { for all } \boldsymbol{\alpha},\, \boldsymbol{\beta} \in A.
	$$
\end{enumerate}
\end{definition}
The result below provides a connection between majorization and Schur-convexity.
\begin{theorem}[\cite{marshall2011}, Theorem~3.A.4]
	\label{MO1}Suppose $J \subset \mathbb{R}$ is an open interval and $h: J^{n} \rightarrow \mathbb{R}$ is continuously differentiable. Then, $h$ is Schur-convex (Schur-concave) on $J^{n}$ if and only if
	\begin{enumerate}
		\item $h$ is symmetric on $J^{n}$, and
		\item for all $i \neq j$ and all $\boldsymbol{\alpha} \in J^{n}$,
		\begin{equation}
		\label{eq:Schur}
        \left(\alpha_{i}-\alpha_{j}\right)\left(\frac{\partial h(\boldsymbol{\alpha})}{\partial \alpha_{i}}-\frac{\partial h(\boldsymbol{\alpha})}{\partial \alpha_{j}}\right) \geq (\leq)\,0,
		\end{equation}
		where $\frac{\partial h(\boldsymbol{\alpha})}{\partial \alpha_{i}}$ represents the partial derivative of $h$ with respect to its $i$-th argument.
	\end{enumerate}
\end{theorem}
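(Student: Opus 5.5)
This is the characterization of Schur-convexity for continuously differentiable functions on $J^n$ (Marshall--Olkin, Theorem~3.A.4). The plan is to prove both directions, reducing everything to the two-coordinate case via T-transforms. Only the Schur-convex case needs to be written out. The Schur-concave case then follows by applying it to $-h$.

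\emph{Necessity.} Symmetry comes first. For any permutation matrix $P$, the vectors $\boldsymbol{\alpha}$ and $\boldsymbol{\alpha}P$ have the same increasingly ordered coordinates. Hence each majorizes the other, and Schur-convexity gives $h(\boldsymbol{\alpha})=h(\boldsymbol{\alpha}P)$. For (\ref{eq:Schur}), fix $i\neq j$ and, by symmetry, assume $\alpha_i>\alpha_j$. For small $\varepsilon>0$, let $\boldsymbol{\alpha}(\varepsilon)$ be obtained by replacing $\alpha_i$ with $\alpha_i-\varepsilon$ and $\alpha_j$ with $\alpha_j+\varepsilon$. As long as $\varepsilon<(\alpha_i-\alpha_j)/2$, this vector is majorized by $\boldsymbol{\alpha}$: a transfer from a larger to a smaller coordinate that does not reverse their order only increases the partial sums of the smallest coordinates. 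Since $J$ is open, $\boldsymbol{\alpha}(\varepsilon)\in J^n$ for small $\varepsilon$. Therefore $h(\boldsymbol{\alpha}(\varepsilon))\le h(\boldsymbol{\alpha})$. Dividing by $\varepsilon$ and letting $\varepsilon\downarrow0$ gives $-\partial_i h+\partial_j h\le 0$, which is (\ref{eq:Schur}). When $\alpha_i=\alpha_j$ the inequality is trivial.

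\emph{Sufficiency.} I will use the classical fact (Hardy--Littlewood--P\'olya; Marshall--Olkin 2.B.1) that if $\boldsymbol{\beta}\overset{m}{\preceq}\boldsymbol{\alpha}$, then $\boldsymbol{\beta}$ can be reached from $\boldsymbol{\alpha}$ by finitely many T-transforms. Each T-transform averages two coordinates, $(x,y)\mapsto(\lambda x+(1-\lambda)y,\,(1-\lambda)x+\lambda y)$, and the coordinates of every intermediate vector lie between those of $\boldsymbol{\alpha}$, so all intermediate vectors stay in $J^n$ because $J$ is an interval. It therefore suffices to check that $h$ does not increase under a single T-transform acting on coordinates $i,j$. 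By symmetry, a T-transform with $\lambda<1/2$ equals one with $\lambda\ge1/2$ composed with a swap. So, relabelling, we may assume $\alpha_i\ge\alpha_j$ and that the transform moves the pair to $(\alpha_i-s,\alpha_j+s)$ with $0\le s\le(\alpha_i-\alpha_j)/2$. Define $g(t)=h(\ldots,\alpha_i-t,\ldots,\alpha_j+t,\ldots)$ for $t\in[0,s]$. Then $g'(t)=-(\partial_i h-\partial_j h)$, evaluated at a point whose $i$-th coordinate is still at least its $j$-th coordinate. By (\ref{eq:Schur}), $g'(t)\le0$, so $g(s)\le g(0)$.

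The main obstacle is the decomposition of majorization into T-transforms. I would cite it rather than reprove it. If a self-contained argument is wanted, I would proceed by induction on the number of coordinates where $\boldsymbol{\alpha}$ and $\boldsymbol{\beta}$ differ after sorting both increasingly. Choose the largest index $k$ with $\alpha_k>\beta_k$ and the smallest index $l>k$ with $\alpha_l<\beta_l$. Then transfer $\min(\alpha_k-\beta_k,\beta_l-\alpha_l)$ between positions $k$ and $l$, which preserves the majorization and reduces the count of differing coordinates by at least one. A minor point also needs care: differentiability of $g$ on the closed segment, which uses continuous differentiability of $h$ on the open set $J^n$.
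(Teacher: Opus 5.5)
The paper gives no proof of this statement. It is quoted from Marshall--Olkin (Theorem~3.A.4) and used only as a tool in Theorems~\ref{thm:main1} and~\ref{thm:main1min}, so there is no in-paper argument to match.

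Your proof is a correct, standard one. Necessity comes from permutation invariance plus a one-sided derivative along a small transfer from the larger to the smaller coordinate. Sufficiency comes from chaining T-transforms and showing $h$ is nonincreasing along each one.

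If you want to avoid the Hardy--Littlewood--P\'olya lemma altogether, there is a shorter route:
\begin{itemize}
\item By symmetry, sort both vectors increasingly.
\item Follow the segment $\boldsymbol{z}(\lambda)=\boldsymbol{\beta}+\lambda(\boldsymbol{\alpha}-\boldsymbol{\beta})$, which stays in $J^n$ and stays increasingly ordered.
\item Use summation by parts to write $\tfrac{d}{d\lambda}h(\boldsymbol{z}(\lambda))=\sum_{k=1}^{n-1}D_k\bigl(\partial_k h-\partial_{k+1}h\bigr)(\boldsymbol{z}(\lambda))$, with $D_k=\sum_{m\le k}(\alpha_m-\beta_m)\le 0$.
\item Each term is nonnegative by (\ref{eq:Schur}), together with symmetry at ties.
\end{itemize}
Your approach pays for a combinatorial decomposition but only ever handles two coordinates at a time. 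The segment approach is a single computation.

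Two details should be repaired.

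\emph{The tie at the endpoint.} In the sufficiency step, at $t=s=(\alpha_i-\alpha_j)/2$ the $i$-th and $j$-th coordinates coincide. There (\ref{eq:Schur}) is vacuous, so it does not by itself give $g'(s)\le 0$. Either apply the mean value theorem on the open interval $(0,s)$, where the $i$-th coordinate is strictly larger, or note that symmetry of $h$ forces $\partial_i h=\partial_j h$ at points whose $i$-th and $j$-th coordinates are equal.

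\emph{The index recipe in your optional sketch.} Your self-contained sketch of the T-transform decomposition has its inequalities reversed for increasing order. Suppose $\boldsymbol{\beta}\overset{m}{\preceq}\boldsymbol{\alpha}$ with both sorted increasingly. The last index where they differ satisfies $\alpha_p>\beta_p$, so the largest $k$ with $\alpha_k>\beta_k$ is that index, and no $l>k$ with $\alpha_l<\beta_l$ exists. Already $\boldsymbol{\alpha}=(0,2)$, $\boldsymbol{\beta}=(1,1)$ breaks the recipe. What you wrote is the correct recipe for decreasing order.

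In increasing order, take the largest $k$ with $\alpha_k<\beta_k$ and the smallest $l>k$ with $\alpha_l>\beta_l$. Then move $\min(\beta_k-\alpha_k,\alpha_l-\beta_l)$ from position $l$ to position $k$. Since $\alpha_m\ge\beta_m$ for all $m>k$, for $k\le\ell<l$ one has $\sum_{m\le\ell}(\beta_m-\alpha_m)=\sum_{m>\ell}(\alpha_m-\beta_m)\ge\alpha_l-\beta_l$. This is exactly what keeps the partial-sum inequalities intact.

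Since you cite the lemma in the main line, neither slip invalidates the proof.
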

For a thorough discussion on the theory of majorization and Schur functions, one may refer to {\cite{marshall2011}}.

\section{Stochastic dominance for complex systems}
\label{sec:general}
{We shall prove some general stochastic dominance results comparing systems built on different collections of components, whose lifetimes are given by a semiparametric model. Let us start by defining the framework. Let $T:A\times\PR\longrightarrow\PR$, where $A\subset\mathbb{R}$, be some functional and consider collections of random variables $X_i$ and $Y_i$, for $i=1,2,\ldots,n$, with distributions $F_{X_i}=T(\alpha_i,F)$ and $F_{Y_i}=T(\beta_i,F)$, where $\alpha_i,\beta_i\in A$ such that $\mathbf{X}=(X_1,\ldots,X_n)$ and $\mathbf{Y}=(Y_1,\ldots,Y_n)$ have joint distributions described by Archimedean copulas with generators $\psi_1$ and $\psi_2$, respectively.}

{\subsection{Stochastic dominance between parallel systems}}
{The lifetime of the parallel system with components $X_i$, for $i=1,\ldots,n$, is given by $X_{n:n}(\boldsymbol{\alpha})=\max\{X_1,\ldots,X_n\}$, where $\boldsymbol{\alpha}=(\alpha_1,\ldots,\alpha_n)$, whose distribution function is of the form
\begin{equation}
\label{eq:maxX}
h(\boldsymbol{\alpha};\psi_1)(x)=F^{(\balpha)}_{n:n}(x)=\psi_1\left(\sum_{i=1}^n\phi_1(T(\alpha_i,F(x)))\right),
\end{equation}
where $\phi_1$ is the inverse of $\psi_1$. Analogous expressions characterise the parallel system with components $Y_i$, for $i=1,\ldots,n$, with lifetime $Y_{n:n}(\boldsymbol{\beta})$, that has distribution function $h(\boldsymbol{\beta};\psi_2)$, where $\boldsymbol{\beta}=(\beta_1,\ldots,\beta_n)$.}

We prove a first stochastic domination result for parallel systems assuming a strict order relation between the parameters characterising each distribution.
\begin{theorem}
\label{thm:main0}
{Assume the parameter vectors $\boldsymbol{\alpha}$ and $\boldsymbol{\beta}$ are such that $\alpha_i\leq\beta_i$, for $i=1,\ldots,n$.} 
    \begin{enumerate}
    \item
    {If} $T$ is increasing in its first argument that is, $\alpha \leq \beta$ implies that, for every $x\in\mathbb{R}$, $T(\alpha, F) (x) \leq T(\beta, F) (x)$ and the generators are such that $\phi_2\circ\psi_1$ is super-additive{, then} {$X_{n:n}(\boldsymbol{\alpha})\geqst Y_{n:n}(\boldsymbol{\beta})$}.
    \item
    {If} $T$ is decreasing in its first argument that is, $\alpha\leq \beta$ implies that, for every $x\in\mathbb{R}$, $T(\alpha, F) (x) \geq T(\beta, F) (x)$ and the generators are such that $\phi_1\circ\psi_2$ is super-additive{, then} {$X_{n:n}(\boldsymbol{\alpha})\leqst Y_{n:n}(\boldsymbol{\beta})$}.
    \end{enumerate}
\end{theorem}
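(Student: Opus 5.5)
Both parts are proved by comparing the distribution functions of the two maxima pointwise. Recall that $X_{n:n}(\boldsymbol{\alpha})\geqst Y_{n:n}(\boldsymbol{\beta})$ is equivalent to $h(\boldsymbol{\alpha};\psi_1)(x)\le h(\boldsymbol{\beta};\psi_2)(x)$ for every $x$. Using \eqref{eq:maxX}, $h(\boldsymbol{\alpha};\psi_1)(x)=C_{\psi_1}(u_1,\ldots,u_n)$ with $u_i=T(\alpha_i,F)(x)$, and $h(\boldsymbol{\beta};\psi_2)(x)=C_{\psi_2}(v_1,\ldots,v_n)$ with $v_i=T(\beta_i,F)(x)$. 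The argument has two steps, inserting the intermediate quantity $C_{\psi_2}(u_1,\ldots,u_n)$ or $C_{\psi_1}(v_1,\ldots,v_n)$: one step changes the copula, the other changes the arguments.

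For part 1, the super-additivity of $\phi_2\circ\psi_1$ and Lemma~\ref{cop} give $C_{\psi_1}(\boldsymbol{u})\le C_{\psi_2}(\boldsymbol{u})$. Since $T$ is increasing in its first argument and $\alpha_i\le\beta_i$, we have $u_i\le v_i$ for every $i$. A copula is a distribution function, hence nondecreasing in each coordinate. (Directly: $\phi_2$ is nonincreasing, so $\sum\phi_2(u_i)\ge\sum\phi_2(v_i)$, and $\psi_2$ is nonincreasing.) Therefore $C_{\psi_2}(\boldsymbol{u})\le C_{\psi_2}(\boldsymbol{v})$. Chaining the two inequalities gives $F^{(\balpha)}_{n:n}(x)\le F^{(\bbeta)}_{n:n}(x)$ for all $x$, hence $\Fbar$ of $X_{n:n}(\boldsymbol{\alpha})$ dominates that of $Y_{n:n}(\boldsymbol{\beta})$. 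This is the claimed $X_{n:n}(\boldsymbol{\alpha})\geqst Y_{n:n}(\boldsymbol{\beta})$.

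For part 2, the roles are reversed. Since $T$ is decreasing in its first argument, $u_i\ge v_i$, so $C_{\psi_1}(\boldsymbol{u})\ge C_{\psi_1}(\boldsymbol{v})$. Lemma~\ref{cop}, applied with $\phi_1\circ\psi_2$ super-additive, gives $C_{\psi_2}(\boldsymbol{v})\le C_{\psi_1}(\boldsymbol{v})$. Together these yield $F^{(\bbeta)}_{n:n}\le F^{(\balpha)}_{n:n}$, that is, $X_{n:n}(\boldsymbol{\alpha})\leqst Y_{n:n}(\boldsymbol{\beta})$.

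The only point needing care is the monotonicity of $C_\psi$ in each argument when written through the generator. This requires $\phi$ to be nonincreasing, which holds because $\psi$ is nonincreasing ($d$-monotone with $i=1$). One must also handle arguments where $\psi$ reaches $0$ at a finite point and $\phi$ is the generalized inverse. I would settle this by appealing to the fact that $C_\psi$ is a genuine copula, hence a distribution function, rather than working through the generator formula. Everything else is a direct chaining of inequalities.
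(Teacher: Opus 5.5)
Your proof is correct and is essentially the paper's argument: in part 1 you chain $C_{\psi_1}(\boldsymbol{u})\le C_{\psi_2}(\boldsymbol{u})\le C_{\psi_2}(\boldsymbol{v})$, using Lemma~\ref{cop} and then the coordinatewise monotonicity of the copula, exactly as the paper does. For part 2, which the paper only calls analogous, your chain $C_{\psi_2}(\boldsymbol{v})\le C_{\psi_1}(\boldsymbol{v})\le C_{\psi_1}(\boldsymbol{u})$ correctly fills in the details.
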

\begin{proof}
Regarding assertion 1., according to Lemma~\ref{cop}, as $\phi_2\circ\psi_1$ is super-additive, it follows that
\begin{eqnarray*}
F_{X_{n:n}}^{(\balpha)} & = & C_{\psi_1}(T(\alpha_1,F),\ldots,T(\alpha_n,F)) \leq C_{\psi_2}(T(\alpha_1,F),\ldots,T(\alpha_n,F)) \\[1.5ex] 
 & \leq &  C_{\psi_2}(T(\beta_1,F),\ldots,T(\beta_n,F))=F_{Y_{n:n}}^{(\bbeta)},
\end{eqnarray*}
using the increasingness of $T(\cdot,F)$ and the fact that $C_{\psi_2}$, being a copula, is a multivariate distribution function, hence increasing in each coordinate. The second assertion is proved analogously taking into account the reversal of the direction of the super-additivity assumption.
\end{proof}
\begin{remark}
Note that the monotonicity assumptions in Theorem~\ref{thm:main0} may be restated in terms of the usual stochastic order. Indeed, for example, as what regards assertion 1., one may rewrite it as $\alpha\leq\beta$ implies that $T(\alpha,F)\geqst T(\beta,F)$. Of course, assertion 2.\ reverses the direction of this stochastic dominance.
\end{remark}

The requirement 
of coordinatewise increasing parameter vectors 
may be relaxed, using the majorization $\overset{m}{\preceq}$ relation introduced in Definition~\ref{def:Schur}, as proved next.
\begin{theorem}
\label{thm:main1}
{Assume the functional $T$ is differentiable with respect to its first argument, the generators are such that $\phi_2\circ\psi_1$ is super-additive, $\psi_2$ is $d$-monotone with $d\geq 3$, $\boldsymbol{\alpha} \overset{m}\succeq \boldsymbol{\beta}$} and 
\begin{equation}
\label{eq:V-inc}
V(\beta) =\frac{1}{\psi_2^\prime\circ\phi_2(T(\beta,F))}\frac{\partial T}{\partial\beta}(\beta,F),\quad\beta\in A,
\end{equation}
is increasing, then {$X_{n:n}(\boldsymbol{\alpha})\geqst Y_{n:n}(\boldsymbol{\beta})$}.
\end{theorem}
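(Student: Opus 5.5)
The argument splits into two inequalities, following the same pattern as the proof of Theorem~\ref{thm:main0}. First the copula is changed, then the parameter vector:
\begin{equation*}
F^{(\balpha)}_{X_{n:n}}=C_{\psi_1}\bigl(T(\alpha_1,F),\ldots,T(\alpha_n,F)\bigr)\leq C_{\psi_2}\bigl(T(\alpha_1,F),\ldots,T(\alpha_n,F)\bigr)\leq C_{\psi_2}\bigl(T(\beta_1,F),\ldots,T(\beta_n,F)\bigr)=F^{(\bbeta)}_{Y_{n:n}}.
\end{equation*}
The first inequality is Lemma~\ref{cop}, since $\phi_2\circ\psi_1$ is super-additive. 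For the second, fix $x$ and set $h(\boldsymbol{\gamma})=\psi_2\bigl(\sum_{i=1}^n\phi_2(T(\gamma_i,F(x)))\bigr)$ for $\boldsymbol{\gamma}\in A^n$. Since $\boldsymbol{\alpha}\overset{m}{\succeq}\boldsymbol{\beta}$, it suffices to show that $h$ is Schur-concave, which gives $h(\boldsymbol{\alpha})\leq h(\boldsymbol{\beta})$. Pointwise ordering of the distribution functions is exactly $X_{n:n}(\boldsymbol{\alpha})\geqst Y_{n:n}(\boldsymbol{\beta})$.

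For Schur-concavity I apply Theorem~\ref{MO1} on the interior of $A$. Symmetry of $h$ is immediate from the sum. The $d$-monotonicity of $\psi_2$ with $d\geq3$ makes $\psi_2$ continuously differentiable with $\psi_2'\leq0$, and together with the differentiability of $T$ this makes $h$ $C^1$. Using $\phi_2'(u)=1/\psi_2'(\phi_2(u))$, the chain rule gives
\begin{equation*}
\frac{\partial h}{\partial\gamma_i}(\boldsymbol{\gamma})=\psi_2'\Bigl(\sum_{k=1}^n\phi_2(T(\gamma_k,F))\Bigr)\,\frac{1}{\psi_2'\circ\phi_2(T(\gamma_i,F))}\,\frac{\partial T}{\partial\gamma}(\gamma_i,F)=\psi_2'(S)\,V(\gamma_i),
\end{equation*}
where $S=\sum_k\phi_2(T(\gamma_k,F))$. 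Hence
\begin{equation*}
(\gamma_i-\gamma_j)\Bigl(\frac{\partial h}{\partial\gamma_i}-\frac{\partial h}{\partial\gamma_j}\Bigr)=\psi_2'(S)\,(\gamma_i-\gamma_j)\bigl(V(\gamma_i)-V(\gamma_j)\bigr).
\end{equation*}
Since $V$ is increasing, $(\gamma_i-\gamma_j)(V(\gamma_i)-V(\gamma_j))\geq0$, and $\psi_2'(S)\leq0$, so the product is $\leq0$. This is condition (\ref{eq:Schur}) in its Schur-concave form, so $h$ is Schur-concave and the second inequality holds.

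The step I expect to need the most care is the regularity bookkeeping. Theorem~\ref{MO1} requires an open interval and a $C^1$ function, so I will work on the interior of $A$, assumed to be an interval, and extend to boundary points by continuity. I also need $\psi_2'\circ\phi_2(T(\beta,F))\neq0$ so that $V$ is defined; this is implicit in the hypothesis, and points where $T(\beta,F(x))\in\{0,1\}$ are trivial, since there both distribution functions equal the same value $0$ or $1$. The role of $d\geq3$ is precisely to guarantee that $\psi_2'$ exists and is continuous, rather than only one-sided derivatives of a convex function.
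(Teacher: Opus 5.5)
Your proposal is correct and follows the paper's proof essentially step for step. You use Lemma~\ref{cop} to pass from $C_{\psi_1}$ to $C_{\psi_2}$, then show Schur-concavity of $\boldsymbol{\beta}\mapsto h(\boldsymbol{\beta};\psi_2)$ via Theorem~\ref{MO1}, using $\partial h/\partial\beta_i=\psi_2'(S)\,V(\beta_i)$, the monotonicity of $V$ and $\psi_2'\leq 0$. Your regularity remarks go beyond what the paper writes, with one small caveat: dismissing points where $T(\beta,F)(x)\in\{0,1\}$ as trivial tacitly assumes this happens exactly when $F(x)\in\{0,1\}$, independently of the parameter, which is true for $T_e$, $T_c$ and $\ToMO$ but is not among the general hypotheses.
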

\begin{proof}
According to Lemma~\ref{cop}, as $\phi_2\circ\psi_1$ is super-additive, it follows that {$h(\boldsymbol{\alpha};\psi_1) \leq h(\boldsymbol{\alpha};\psi_2)$}. 
As we are assuming that $\boldsymbol{\alpha} \overset{m}\succeq \boldsymbol{\beta}$, taking into account Definition~\ref{def:Schur}, the conclusion follows if we prove the Schur-concavity of {$h(\boldsymbol{\beta};\psi_2)$} with respect to {$\boldsymbol{\beta}$}, for what we shall verify that the assumptions of Theorem~\ref{MO1} are satisfied. As {$h(\boldsymbol{\beta};\psi_2)$} is obviously symmetric with respect to $\boldsymbol{\beta}$, it remains to show that (\ref{eq:Schur}) holds. Differentiating, we have
$$
\frac{\partial h}{\partial\beta_i}=\psi_2^\prime\left(\sum_{i=1}^n\phi_2(T(\beta_i,F))\right)V(\beta_i),
$$
hence
$$
(\beta_i-\beta_j)\left(\frac{\partial h}{\partial\beta_{i}}-\frac{\partial h}{\partial\beta_{j}}\right) =
\psi_2^\prime\left(\sum_{i=1}^n\phi_2(T(\beta_i,F))\right)(\beta_i-\beta_j)\bigl(V(\beta_i)-V(\beta_j)\bigr)\leq 0,
$$
as $V$ is increasing and $\psi_2$, being the generator of an Archimedean copula, is decreasing.
\end{proof}

For many Archimedean copula generators, namely, for the families of copulas mentioned above, a convenient representation of $\psi^\prime$ is available, allowing for a simplification of the function $V$ defined in (\ref{eq:V-inc}).
\begin{corollary}
\label{cor:main1}
Assume the same conditions as in Theorem~\ref{thm:main1}. 
With $K_2(x)=-\frac{\psi_2(x)}{\psi_2^\prime(x)}$, the same conclusion holds if
(\ref{eq:V-inc}) is replaced by
\begin{equation}
\label{eq:K2-inc}
V^\ast(\beta) =\frac{-K_2\circ\phi_2(T(\beta,F))}{T(\beta,F)}\frac{\partial T}{\partial\beta}(\beta,F),\quad\beta\in A,
\end{equation}
\end{corollary}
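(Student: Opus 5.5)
The plan is to show that $V^\ast$ coincides with the function $V$ of (\ref{eq:V-inc}), so that its increasingness gives the increasingness of $V$ and Theorem~\ref{thm:main1} applies unchanged. The only ingredient is the identity $\psi_2\circ\phi_2(u)=u$ for $u\in(0,1]$, which holds because $\phi_2$ is the (right continuous) inverse of the continuous, decreasing generator $\psi_2$. On the range where $\psi_2$ is strictly decreasing this is a genuine inverse; that covers all the values $T(\beta,F)(x)\in(0,1)$ we need.

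First, fix $x$ and write $u=T(\beta,F)(x)$. By definition of $K_2$,
$$
-K_2\circ\phi_2(u)=\frac{\psi_2(\phi_2(u))}{\psi_2^\prime(\phi_2(u))}=\frac{u}{\psi_2^\prime\circ\phi_2(u)}.
$$
Dividing by $u=T(\beta,F)$ gives
$$
\frac{-K_2\circ\phi_2(T(\beta,F))}{T(\beta,F)}=\frac{1}{\psi_2^\prime\circ\phi_2(T(\beta,F))}.
$$
Multiplying by $\frac{\partial T}{\partial\beta}(\beta,F)$ then yields $V^\ast(\beta)=V(\beta)$ for every $\beta\in A$. Consequently, assuming $V^\ast$ increasing is the same as assuming (\ref{eq:V-inc}) increasing. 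All other hypotheses (super-additivity of $\phi_2\circ\psi_1$, $d$-monotonicity with $d\geq3$, and $\boldsymbol{\alpha}\overset{m}\succeq\boldsymbol{\beta}$) are shared, so the conclusion $X_{n:n}(\boldsymbol{\alpha})\geqst Y_{n:n}(\boldsymbol{\beta})$ follows directly from Theorem~\ref{thm:main1}.

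The main obstacle is checking that the quantities are well defined, not the algebra. We need $T(\beta,F)(x)>0$ and $\psi_2^\prime(\phi_2(u))\neq0$ so that $K_2$ and the quotient make sense. Since $\psi_2$ is $d$-monotone with $d\geq3$, it is differentiable, decreasing and convex. Its derivative is therefore negative wherever $\psi_2>0$: if $\psi_2^\prime(t_0)=0$ with $\psi_2(t_0)>0$, convexity and monotonicity would force $\psi_2$ to be constant on $[t_0,\infty)$, contradicting $\lim\psi_2=0$.

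Points $x$ with $T(\beta,F)(x)\in\{0,1\}$ need separate treatment. There the distribution function of the parallel system is $0$ or $1$ for both systems (or the comparison is trivial), so the Schur argument is only needed where $u\in(0,1)$. This is the same restriction implicitly used in the proof of Theorem~\ref{thm:main1}.
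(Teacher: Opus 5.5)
Your proposal is correct and follows the paper's own one-line argument. The paper likewise uses $\psi_2\circ\phi_2(u)=u$ to get $V^\ast\equiv V$, so Theorem~\ref{thm:main1} applies verbatim; your extra checks that $\psi_2^\prime<0$ where $\psi_2>0$ and that only points with $T(\beta,F)(x)\in(0,1)$ matter go beyond what the paper writes, without changing the approach.
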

\begin{proof}
This is a straightforward consequence of $\phi_2$ being the inverse of $\psi_2$.
\end{proof}

\begin{example}
\label{ex:exp}
As an application of the previous results, choose  as baseline the standard exponential distribution $\mathcal{E}(x)=1-e^{-x}$, for $x\geq 0$, and, for $\beta>0$, $T(\beta,\mathcal{E})(x)=1-(1-\mathcal{E}(x))^\beta=1-e^{-\beta x}$, the exponential distribution with hazard rate $\beta$. Then, with the notation of Theorems~\ref{thm:main0} and \ref{thm:main1}, $\mathbf{X}=(X_1,\ldots,X_n)$ and $\mathbf{Y}=(Y_1,\ldots,Y_n)$ are random vectors with dependent components whose marginal distributions are heterogeneous exponentials. Assume the distributions of $\mathbf{X}$ and $\mathbf{Y}$ are described using a Clayton copula with parameters $\gamma_1$ and $\gamma_2$, respectively, such that $\gamma_2\geq\gamma_1>0$. Then, as referred in Example~\ref{ex:fams}, $\phi_2\circ\psi_1$ is super-additive. Hence, from Theorem~\ref{thm:main0}, if $\alpha_i\leq\beta_i$, $i=1,\ldots,n$, it follows that {$X_{n:n}(\boldsymbol{\alpha})\geqst Y_{n:n}(\boldsymbol{\beta})$}. In case we do not have increasing coordinates for the parameter vectors, but they still satisfy $\boldsymbol{\alpha}\overset{m}\succeq\boldsymbol{\beta}$, we get a conclusion using Theorem~\ref{thm:main1}. Moreover, for the Clayton generator $\psi_2^\prime\circ\phi_2(T(\beta,\mathcal{E}))(x)=-(1-e^{-x})^{1+\gamma_2}$, so
$$
V(\beta)=-x(1-e^{-\beta x})^{-(1+\gamma_2)}e^{-\beta x},
$$
which is increasing with respect to $\beta>0$, so again we conclude that {$X_{n:n}(\boldsymbol{\alpha})\geqst Y_{n:n}(\boldsymbol{\beta})$}.
\end{example}

The characterisation established in Theorem~\ref{thm:main1} provides a tool for identifying extremal configurations of parallel systems under stochastic ordering of their lifetimes. Under rectangular constraints on the parameter space, it shows that the extremal behaviour is achieved by specific boundary configurations, thus yielding an explicit and tractable description of optimal parameter allocations.
\begin{corollary}
\label{cor:boxconst}
Assume the functional $T$ and the generator of the Archimedean copula describing the distribution of $\mathbf{X}$ are such that the function $V$ defined in (\ref{eq:V-inc}) (with $\psi_2=\psi_1$) is increasing. Fix numbers $a < b$ and $c \in [na, nb)$, and define
$$
A_{a,b,c} = \left\{\balpha = (\alpha_1,\dots,\alpha_n) \in [a,b]^n : \sum_{i=1}^n \alpha_i = c \right\},
$$
$$
\overline{\balpha} = \left(\frac{c}{n}, \ldots, \frac{c}{n}\right),\quad q = \left\lfloor \frac{c - na}{b - a} \right\rfloor, 
\quad\mbox{and} \quad
\eta = c - qb - (n-q-1)a.
$$
Then $\eta \in [a,b)$ and
$$
\balpha^* = (\!\!\!\!\underbrace{a,\dots,a}_{n-q-1\text{ times}}\!\!\!\!, \eta, \underbrace{b,\dots,b}_{q\text{ times}}) \in A_{a,b,c}.
$$
Moreover, for every $\balpha \in A_{a,b,c}$, $X_{n:n}(\balpha^*) \ge_{st} X_{n:n}(\balpha) \ge_{st} X_{n:n}(\overline{\balpha})$.
\end{corollary}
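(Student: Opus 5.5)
The plan is to apply Theorem~\ref{thm:main1} with $\psi_2=\psi_1$. Then $\phi_2\circ\psi_1$ is the identity, which is additive and so trivially super-additive, and the assumption that $V$ is increasing is exactly what the proof of Theorem~\ref{thm:main1} uses to show that $h(\cdot;\psi_1)$ is Schur-concave in the parameter vector. So $\balpha\overset{m}\succeq\boldsymbol{\gamma}$ gives $F^{(\balpha)}_{n:n}\le F^{(\boldsymbol{\gamma})}_{n:n}$ pointwise, i.e.\ $X_{n:n}(\balpha)\geqst X_{n:n}(\boldsymbol{\gamma})$. Hence the stochastic ordering claim reduces to two majorization statements inside $A_{a,b,c}$:
$$
\balpha^*\overset{m}\succeq\balpha\overset{m}\succeq\overline{\balpha}\quad\text{for every }\balpha\in A_{a,b,c}.
$$
The $d$-monotonicity hypothesis on $\psi_2$ is inherited from $\psi_1$, which I take as implicitly assumed.

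First I check the membership claims. Since $c\in[na,nb)$, $q$ is an integer in $\{0,\ldots,n-1\}$, and $q\le (c-na)/(b-a)<q+1$. Rewriting, $\eta-a=(c-na)-q(b-a)\in[0,b-a)$, so $\eta\in[a,b)$. The coordinates of $\balpha^*$ sum to $(n-q-1)a+\eta+qb=c$ by the definition of $\eta$, so $\balpha^*\in A_{a,b,c}$. Clearly $\overline{\balpha}\in A_{a,b,c}$ as well, since $a\le c/n<b$.

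The lower bound $\balpha\overset{m}\succeq\overline{\balpha}$ is the classical fact that the mean vector is majorized by every vector with the same sum. Indeed, the sum of the $\ell$ smallest coordinates of $\balpha$ is at most $\ell c/n$, since the average of the smallest $\ell$ values cannot exceed the overall average.

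The upper bound $\balpha^*\overset{m}\succeq\balpha$ is the main step. By Definition~\ref{def:Schur}, I must show that for each $\ell$ the sum of the $\ell$ smallest coordinates of $\balpha$ is at least that of $\balpha^*$. I will argue this through the complementary partial sums of the $n-\ell$ largest coordinates, since the total sums are both $c$. Set $m=n-\ell$.
\begin{enumerate}
\item If $m\le q$, the $m$ largest coordinates of $\balpha^*$ sum to $mb$, which bounds any $m$ coordinates of $\balpha\in[a,b]^n$.
\item If $m>q$, the $m$ largest coordinates of $\balpha^*$ are $q$ copies of $b$, then $\eta$, then $m-q-1$ copies of $a$. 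Their sum is $c-(n-m)a$. For $\balpha$, the $m$ largest coordinates sum to $c$ minus the remaining $n-m$ coordinates, each of which is $\ge a$, so they sum to at most $c-(n-m)a$.
\end{enumerate}
This establishes the majorization. Combining it with the Schur-concavity from Theorem~\ref{thm:main1} (with $\psi_2=\psi_1$, applied twice) yields both stochastic inequalities. The only subtlety I foresee is making sure the Schur-concavity argument applies on the relevant parameter set, i.e.\ the open interval $J$ in Theorem~\ref{MO1} contains $[a,b]$. Continuity of $h$ in the parameters should handle the boundary points $a$ and $b$.
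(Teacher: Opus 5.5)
Your proposal is correct and follows essentially the same route as the paper. You apply Theorem~\ref{thm:main1} with $\psi_2=\psi_1$ to reduce everything to $\balpha^*\overset{m}\succeq\balpha\overset{m}\succeq\overline{\balpha}$, then check the upper majorization over the same two ranges of $\ell$; the paper uses direct partial sums for $\ell\le n-q-1$ and complementary sums for $\ell\ge n-q$, whereas you use complementary sums throughout, which is only a cosmetic difference (your remarks on the tacit $d$-monotonicity/differentiability hypotheses and on the boundary of the parameter interval are points the paper also leaves implicit).
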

\begin{proof}
First, note that $\overline{\balpha} \in A_{a,b,c}$ implies $a \leq \frac{c}{n} < b$, as $c \in [na,nb)$. Moreover, we may write $c - na = q(b-a) + r$, where $0 \leq r < b-a$,	which leads to $\eta = c - qb - (n-q-1)a = a + r$, proving that $\eta \in [a,b)$. Therefore $\balpha^* \in A_{a,b,c}$.
Now, taking into account Theorem~\ref{thm:main1}, it is enough to prove that $\balpha^*\overset{m}\succeq\balpha\overset{m}\succeq\overline{\balpha}$. The latter majorization is obvious, as among vectors with coordinates with a fixed sum, the constant vector is well known to be majorised by any other vector. Hence, we need to verify that, for every $\ell=1,\ldots,n-1$, $\sum_{i=1}^\ell \alpha_{i:n}\geq\sum_{i=1}^\ell \alpha^*_{i:n}$, where $\alpha_{i:n}$ and $\alpha_{i:n}^*$ represent the coordinates of $\balpha$ and $\balpha^*$, respectively, ordered increasingly.
    \begin{enumerate}[(i)]
    \item
    If $1 \leq \ell \leq n-q-1$, then the $\ell$ smallest coordinates of $\balpha^*$ are all equal to $a$, so $\sum_{i=1}^\ell \alpha^*_{i:n} = \ell a$.
	Since every coordinate of $\balpha$ belongs to $[a,b]$, we also have $\alpha_{i:n} \geq a$, for every $i = 1,\ldots,\ell$,
	and therefore $\sum_{i=1}^\ell \alpha_{i:n} \geq \ell a = \sum_{i=1}^\ell \alpha^*_{i:n}$.
    \item
    Assume now that $n-q \leq \ell \leq n-1$. Since the total sums of the coordinates of $\balpha$ and $\balpha^*$ are both equal to $c$, 
	$\sum_{i=1}^\ell \alpha_{i:n} = c - \sum_{i=\ell+1}^n \alpha_{i:n}$ and $\sum_{i=1}^\ell \alpha^*_{i:n} = c - \sum_{i=\ell+1}^n \alpha^*_{i:n}$.
	The number of terms in both complementary sums is $n-\ell \leq q$. Since each coordinate of $\balpha$ is bounded above by $b$, we have
    $\sum_{i=\ell+1}^n \alpha_{i:n} \le (n-\ell)b$.
	On the other hand, because the $n-\ell$ largest coordinates of $\balpha^*$ are all equal to $b$ in this range of $\ell$, we obtain
	$\sum_{i=\ell+1}^n \alpha^*_{i:n} = (n-\ell)b$.
	Hence
	$$
	\sum_{i=\ell+1}^n \alpha_{i:n} \leq \sum_{i=\ell+1}^n \alpha^*_{i:n}
	\quad\Rightarrow\quad
	\sum_{i=1}^\ell \alpha_{i:n} \geq \sum_{i=1}^\ell \alpha_{i:n}^*,
    $$
	which completes the proof.
    \end{enumerate}
\end{proof}

\subsection{Stochastic dominance between series systems}
Complex systems with components connected in series are treated similarly, after adapting appropriately the notation. The series system with components $X_i$, for $i=1,\ldots,n$, whose distributions are as described above, has lifetime $X_{1:n}(\boldsymbol{\alpha})=\min\{X_1,\ldots,X_n\}$, with tail $\Fbar_{1:n}^{(\balpha)}(x)={\rm P}(X_1>x,\ldots,X_n>x)$. As above, assuming that $\mathbf{X}$ is associated with an Archimedean copula with generator $\psi_1$ is given by
\begin{equation}
\label{eq:minX}
\widetilde{h}(\balpha;\psi)(x)=\Fbar_{1:n}^{(\balpha)}(x) = \psi_1\left(\sum_{i=1}^{n}\phi_1\left (1-T(\alpha_i,F(x))\right )\right)
= \psi_1\left(\sum_{i=1}^{n}\phi_1\left (\Tbar(\alpha_i,F(x))\right )\right),
\end{equation}
by defining $\overline{T}(\alpha,F)=1-T(\alpha,F)$. Analogous adaptations should be made to describe the lifetime $Y_{1:n}(\boldsymbol{\beta})$ of the series system with components $Y_i$, for $i=1,\ldots,n$.
We now obtain sufficient conditions for stochastic dominance between series system lifetimes by adapting the above results obtained in the previous subsection for parallel systems.

\medskip

\begin{theorem}
\label{thm:main0min}
{Assume the parameter vectors $\boldsymbol{\alpha}$ and $\boldsymbol{\beta}$ are such that $\alpha_i\leq\beta_i$, for $i=1,\ldots,n$}.
    \begin{enumerate}
    \item
    If $T$ is decreasing in its first argument that is, $\alpha \leq \beta$ implies that, for very $x\in\mathbb{R}$, $T(\alpha, F) (x) \geq T(\beta, F) (x)$ and the generators are such that $\phi_2\circ\psi_1$ is super-additive, then {$X_{1:n}(\boldsymbol{\alpha})\leqst Y_{1:n}(\boldsymbol{\beta})$}.
    \item
    If $T$ is increasing in its first argument that is, $\alpha\leq \beta$ implies that, for every $x\in\mathbb{R}$, $T(\alpha, F) (x) \leq T(\beta, F) (x)$ and the generators are such that $\phi_1\circ\psi_2$ is super-additive, then {$X_{1:n}(\boldsymbol{\alpha})\geqst Y_{1:n}(\boldsymbol{\beta})$}.
    \end{enumerate}
\end{theorem}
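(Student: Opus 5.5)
The argument mirrors the proof of Theorem~\ref{thm:main0}, but now for survival functions. By (\ref{eq:minX}), the survival function of the series system is obtained by evaluating the Archimedean copula at the marginal survival functions $\Tbar(\alpha_i,F)=1-T(\alpha_i,F)$:
\[
\Fbar_{X_{1:n}}^{(\balpha)}=C_{\psi_1}\bigl(\Tbar(\alpha_1,F),\ldots,\Tbar(\alpha_n,F)\bigr),
\qquad
\Fbar_{Y_{1:n}}^{(\bbeta)}=C_{\psi_2}\bigl(\Tbar(\beta_1,F),\ldots,\Tbar(\beta_n,F)\bigr).
\]
By Definition~\ref{def:st}, we must compare these survival functions pointwise in $x$. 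We do this in two moves: first change the copula using Lemma~\ref{cop}, then change the parameters using the coordinatewise monotonicity of a copula.

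For assertion 1, $\phi_2\circ\psi_1$ is super-additive, so Lemma~\ref{cop} evaluated at $\boldsymbol{u}=(\Tbar(\alpha_1,F)(x),\ldots,\Tbar(\alpha_n,F)(x))\in[0,1]^n$ gives
\[
\Fbar_{X_{1:n}}^{(\balpha)}(x)\leq C_{\psi_2}\bigl(\Tbar(\alpha_1,F)(x),\ldots,\Tbar(\alpha_n,F)(x)\bigr).
\]
Since $T$ is decreasing in its first argument, $\alpha_i\leq\beta_i$ yields $T(\alpha_i,F)(x)\geq T(\beta_i,F)(x)$, that is, $\Tbar(\alpha_i,F)(x)\leq\Tbar(\beta_i,F)(x)$ for every $i$. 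Now $C_{\psi_2}$ is a multivariate distribution function and hence nondecreasing in each coordinate. Therefore the right-hand side is at most $C_{\psi_2}(\Tbar(\beta_1,F)(x),\ldots,\Tbar(\beta_n,F)(x))=\Fbar_{Y_{1:n}}^{(\bbeta)}(x)$, which gives $X_{1:n}(\balpha)\leqst Y_{1:n}(\bbeta)$.

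For assertion 2 the roles are reversed, and we start from the $\mathbf{Y}$ side. Since $\phi_1\circ\psi_2$ is super-additive, Lemma~\ref{cop} (with the indices swapped) gives
\[
\Fbar_{Y_{1:n}}^{(\bbeta)}(x)\leq C_{\psi_1}\bigl(\Tbar(\beta_1,F)(x),\ldots,\Tbar(\beta_n,F)(x)\bigr).
\]
Since $T$ is increasing in its first argument, $\Tbar(\beta_i,F)(x)\leq\Tbar(\alpha_i,F)(x)$ for every $i$. Monotonicity of $C_{\psi_1}$ in each coordinate then bounds the right-hand side by $\Fbar_{X_{1:n}}^{(\balpha)}(x)$, so $X_{1:n}(\balpha)\geqst Y_{1:n}(\bbeta)$.

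The only point needing care is the input to Lemma~\ref{cop}. The lemma is stated for the Archimedean copula $C_\psi$, whereas $\Fbar_{1:n}$ involves the survival copula $\widehat{C}$. Formula (\ref{eq:minX}), however, already writes the survival function as $\psi_1\bigl(\sum_i\phi_1(\Tbar(\alpha_i,F))\bigr)$, which is exactly $C_{\psi_1}$ evaluated at the points $\Tbar(\alpha_i,F)(x)\in[0,1]$. So the lemma applies directly, and no further obstacle is expected.
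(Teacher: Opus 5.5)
Your proof is correct and follows the paper's argument: the paper's proof just says to repeat the proof of Theorem~\ref{thm:main0} using the survival-copula representation. That is what you do, chaining Lemma~\ref{cop} with the coordinatewise monotonicity of the Archimedean copula evaluated at the marginal survival functions $\Tbar(\cdot,F)$, with the same convention as (\ref{eq:minX}) that the survival copula is $C_{\psi_1}$.
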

\begin{proof}
The proof follows the arguments used to prove Theorem~\ref{thm:main0} using the survival copula representation for the tails of the distributions (see (\ref{eq:cop-rep})).
\end{proof}

\begin{theorem}
\label{thm:main1min}
{Assume $T$ is differentiable with respect to its first argument, the generators are such that $\phi_2\circ\psi_1$ is super-additive, $\psi_1$ is $d$-monotone with $d\geq 3$, $\boldsymbol{\beta}\overset{m}\succeq\boldsymbol{\alpha}$} and 
\begin{equation}
\label{eq:V-inc-min}
\overline{V}(\beta) =\frac{1}{\psi_1^\prime\circ\phi_1(\Tbar(\beta,F))}\frac{\partial\Tbar}{\partial\beta}(\beta,F),\quad\beta\in A,
\end{equation}
is decreasing, then, {$X_{1:n}(\boldsymbol{\alpha})\leqst Y_{1:n}(\boldsymbol{\beta})$}.
\end{theorem}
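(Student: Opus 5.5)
The plan is to mirror the proof of Theorem~\ref{thm:main1}, working with the survival functions (\ref{eq:minX}) instead of distribution functions. Since $X_{1:n}(\balpha)\leqst Y_{1:n}(\bbeta)$ means $\widetilde{h}(\balpha;\psi_1)(x)\leq \widetilde{h}(\bbeta;\psi_2)(x)$ for every $x$, I will split this into the chain
$$
\widetilde{h}(\balpha;\psi_1)\leq \widetilde{h}(\bbeta;\psi_1)\leq \widetilde{h}(\bbeta;\psi_2).
$$
The first inequality handles the change of parameters with the copula fixed. The second handles the change of copula with the parameters fixed. This order matters, because the hypothesis on $\overline{V}$ involves $\psi_1$, not $\psi_2$. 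So the Schur argument must be run with generator $\psi_1$, and the change of copula must come last.

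\emph{The second inequality.} I will apply Lemma~\ref{cop} at the point $\boldsymbol{u}=(\Tbar(\beta_1,F(x)),\ldots,\Tbar(\beta_n,F(x)))\in[0,1]^n$. Super-additivity of $\phi_2\circ\psi_1$ gives $C_{\psi_1}(\boldsymbol{u})\leq C_{\psi_2}(\boldsymbol{u})$. By (\ref{eq:cop-rep}) and (\ref{eq:minX}), with the Archimedean copulas acting as survival copulas, this is exactly $\widetilde{h}(\bbeta;\psi_1)\leq\widetilde{h}(\bbeta;\psi_2)$.

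\emph{The first inequality.} Since $\bbeta\overset{m}\succeq\balpha$, Definition~\ref{def:Schur} reduces the claim to Schur-convexity of $\boldsymbol{\theta}\mapsto\widetilde{h}(\boldsymbol{\theta};\psi_1)(x)$ for fixed $x$, and I will verify this through Theorem~\ref{MO1}.
\begin{itemize}
\item Symmetry in $\boldsymbol{\theta}$ is immediate from the form of (\ref{eq:minX}).
\item For continuous differentiability, the $d$-monotonicity of $\psi_1$ with $d\geq3$ makes $\psi_1$ continuously differentiable with $\psi_1'\leq 0$. Together with the differentiability of $T$ in its first argument, this justifies the chain rule.
\item Differentiating, and using $\phi_1'(u)=1/\psi_1'(\phi_1(u))$, gives
$$
\frac{\partial \widetilde{h}}{\partial\theta_i}=\psi_1'\Bigl(\sum_{k=1}^n\phi_1(\Tbar(\theta_k,F))\Bigr)\,\overline{V}(\theta_i).
$$
\item Hence
$$
(\theta_i-\theta_j)\Bigl(\frac{\partial \widetilde{h}}{\partial\theta_i}-\frac{\partial \widetilde{h}}{\partial\theta_j}\Bigr)=\psi_1'\Bigl(\sum_{k}\phi_1(\Tbar(\theta_k,F))\Bigr)\,(\theta_i-\theta_j)\bigl(\overline{V}(\theta_i)-\overline{V}(\theta_j)\bigr).
$$
Because $\overline{V}$ is decreasing, $(\theta_i-\theta_j)(\overline{V}(\theta_i)-\overline{V}(\theta_j))\leq0$. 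Multiplying by $\psi_1'\leq0$ makes the whole expression nonnegative, which is condition (\ref{eq:Schur}) in its Schur-convex form.
\end{itemize}
Theorem~\ref{MO1} then gives $\widetilde{h}(\balpha;\psi_1)\leq\widetilde{h}(\bbeta;\psi_1)$. Combining this with the second inequality proves the theorem.

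\emph{Expected obstacle.} The main difficulty is bookkeeping rather than depth. I need to check that the sign conventions line up: Schur-convexity, not Schur-concavity as in Theorem~\ref{thm:main1}, with the majorization direction reversed and $\overline{V}$ decreasing. I also need to confirm that Theorem~\ref{MO1} is applicable. This means restricting to the interior of $A$, taken as an open interval $J$, and to those $x$ where $0<\Tbar(\theta,F(x))<1$, so that $\phi_1$ is differentiable there. The boundary values of $x$ would then be handled by continuity, or trivially when the survival functions equal $0$ or $1$.
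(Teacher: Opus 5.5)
Your proposal is correct and follows the paper's own argument. It uses the same chain $\widetilde{h}(\balpha;\psi_1)\leq\widetilde{h}(\bbeta;\psi_1)\leq\widetilde{h}(\bbeta;\psi_2)$: Lemma~\ref{cop} handles the change of copula at $\bbeta$, and Theorem~\ref{MO1} gives Schur-convexity of $\widetilde{h}(\cdot;\psi_1)$ because $\overline{V}$ is decreasing and $\psi_1^\prime\leq 0$. Your extra care about the open interval required in Theorem~\ref{MO1}, the restriction $0<\Tbar<1$, and reading the Archimedean copulas as survival copulas simply makes explicit what the paper leaves implicit.
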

\begin{proof}
According to Lemma~\ref{cop}, as $\phi_2\circ\psi_1$ is super-additive, it follows that {$\widetilde{h}(\boldsymbol{\beta};\psi_2) \geq \widetilde{h}(\boldsymbol{\beta};\psi_1)$}. 
Differentiating, we have
$$
\frac{\partial\widetilde{h}}{\partial\beta_i}=\psi_1^\prime\left(\sum_{i=1}^n\phi_1(\Tbar(\beta_i,F))\right)\overline{V}(\beta_i),
$$
hence
$$
(\beta_i-\beta_j)\left(\frac{\partial\widetilde{h}}{\partial\beta_{i}}-\frac{\partial\widetilde{h}}{\partial\beta_{j}}\right) =
\psi_1^\prime\left(\sum_{i=1}^n\phi_1(\Tbar(\beta_i,F))\right)(\beta_i-\beta_j)\bigl(\overline{V}(\beta_i)-\overline{V}(\beta_j)\bigr)\geq 0,
$$
as $\psi_1$ is decreasing, thus $\widetilde{h}$ is Schur-convex, so 
{$\widetilde{h}(\boldsymbol{\beta};\psi_1)\geq\widetilde{h}(\boldsymbol{\alpha};\psi_1)$}, concluding the proof.
%
\end{proof}

A statement similar to Corollary~\ref{cor:main1} is immediate.
\begin{corollary}
\label{cor:main1min}
Assume the same conditions as in Theorem~\ref{thm:main1min}. 
With $K_1(x)=-\frac{\psi_1(x)}{\psi_1^\prime(x)}$, the same conclusion holds if
(\ref{eq:V-inc-min}) is replaced by
\begin{equation}
\label{eq:K1-dec}
\overline{V}^\ast(\beta) =\frac{-K_1\circ\phi_1(\Tbar(\beta,F))}{\Tbar(\beta,F)}\frac{\partial\Tbar}{\partial\beta}(\beta,F),\quad\beta\in A,
\end{equation}
\end{corollary}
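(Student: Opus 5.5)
The plan is to reduce Corollary~\ref{cor:main1min} to Theorem~\ref{thm:main1min} by showing that $\overline{V}^\ast$ in (\ref{eq:K1-dec}) coincides with $\overline{V}$ in (\ref{eq:V-inc-min}). Once this identity is established, the monotonicity assumption on $\overline{V}^\ast$ is exactly the assumption on $\overline{V}$, and the conclusion $X_{1:n}(\boldsymbol{\alpha})\leqst Y_{1:n}(\boldsymbol{\beta})$ follows directly from Theorem~\ref{thm:main1min}. This mirrors the proof of Corollary~\ref{cor:main1}.

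The key step is to rewrite the factor $1/\psi_1^\prime\circ\phi_1(\Tbar(\beta,F))$. Set $u=\Tbar(\beta,F)$, evaluated at a fixed $x$. Since $\phi_1$ is the inverse of $\psi_1$, we have $\psi_1(\phi_1(u))=u$ for $u\in(0,1]$. By the definition $K_1=-\psi_1/\psi_1^\prime$, evaluating at the point $\phi_1(u)$ gives
$$
K_1(\phi_1(u))=-\frac{\psi_1(\phi_1(u))}{\psi_1^\prime(\phi_1(u))}=-\frac{u}{\psi_1^\prime(\phi_1(u))}.
$$
Hence
$$
\frac{1}{\psi_1^\prime\circ\phi_1(u)}=\frac{-K_1\circ\phi_1(u)}{u}.
$$
Multiplying both sides by $\frac{\partial\Tbar}{\partial\beta}(\beta,F)$ yields $\overline{V}=\overline{V}^\ast$ pointwise in $\beta$, for each fixed $x$.

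The only point that needs care is that these expressions are well defined. We need $\psi_1^\prime(\phi_1(u))\neq 0$ and $u=\Tbar(\beta,F)(x)\in(0,1)$. The first condition is already implicit in the hypothesis of Theorem~\ref{thm:main1min}, since $\overline{V}$ must be finite there. The condition $u\in(0,1)$ can be imposed on the support of the baseline $F$; at points where $\Tbar$ equals $0$ or $1$, both survival functions are trivially equal and the stochastic comparison is unaffected. The $d$-monotonicity of $\psi_1$ with $d\geq3$ guarantees that $\psi_1^\prime$ exists and is continuous, so $K_1$ is well defined wherever $\psi_1^\prime<0$. Beyond these routine checks there is no real obstacle, because the identity is purely algebraic.
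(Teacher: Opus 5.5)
Your proof is correct and follows the paper's route. The paper gives no separate proof here: it calls the statement immediate by analogy with Corollary~\ref{cor:main1}, whose one-line proof uses only that $\phi$ is the inverse of $\psi$. That gives $1/\psi_1^\prime\circ\phi_1(u)=-K_1\circ\phi_1(u)/u$, hence $\overline{V}^\ast=\overline{V}$, which is exactly your identity.

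Your side remark that points with $\Tbar\in\{0,1\}$ are harmless is not automatic for a general $T$. It holds when those points do not depend on the parameter, as in the PHR, PRHR and oMO models. The remark is not needed anyway, because the algebraic identity is the whole content of the corollary.
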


Corollary~\ref{cor:boxconst} admits a direct counterpart for series systems. Under identical parameter constraints, the extremal configurations with respect to stochastic ordering of the system lifetime can be characterized in a completely analogous way. The proof is omitted, as it follows the same line of reasoning as Corollary~\ref{cor:boxconst}.
\begin{corollary}
Assume the functional $T$ and the generator of the Archimedean copula describing the distribution of $\mathbf{X}$ are such that the function $V^\ast$ defined in (\ref{eq:K1-dec}) (with $\psi_2=\psi_1$) is decreasing. 
With the notations of Corollary~\ref{cor:boxconst}, for every $\balpha \in A_{a,b,c}$, we have that $X_{1:n}(\balpha^*) \leq_{st} X_{1:n}(\balpha) \leq_{st} X_{1:n}(\overline{\balpha})$.
\end{corollary}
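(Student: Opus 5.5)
The plan is to combine two facts: the majorization chain already obtained in the proof of Corollary~\ref{cor:boxconst}, and a Schur-type property of the series survival function $\widetilde{h}(\cdot;\psi_1)$ from (\ref{eq:minX}). Everything happens under the single generator $\psi_1$, which means taking $\psi_2=\psi_1$ in Theorem~\ref{thm:main1min}. In that case the super-additivity hypothesis holds trivially, since $\phi_1\circ\psi_1$ is the identity and hence additive. Lemma~\ref{cop} then gives nothing new, and only the Schur step matters.

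\emph{Step 1 (majorization).} I reuse verbatim the argument of Corollary~\ref{cor:boxconst}. It gives $\eta\in[a,b)$, $\balpha^*\in A_{a,b,c}$ and, for every $\balpha\in A_{a,b,c}$,
$$
\balpha^*\overset{m}\succeq\balpha\overset{m}\succeq\overline{\balpha}.
$$
That argument concerns only the parameter vectors and not the system, so it transfers unchanged.

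\emph{Step 2 (Schur property).} The stated chain $X_{1:n}(\balpha^*)\leq_{st}X_{1:n}(\balpha)\leq_{st}X_{1:n}(\overline{\balpha})$ says, pointwise in $x$,
$$
\widetilde{h}(\balpha^*;\psi_1)(x)\leq\widetilde{h}(\balpha;\psi_1)(x)\leq\widetilde{h}(\overline{\balpha};\psi_1)(x).
$$
Given Step 1, this is exactly Schur-concavity of $\balpha\mapsto\widetilde{h}(\balpha;\psi_1)(x)$. Symmetry is clear. By Theorem~\ref{MO1} it remains to check
$$
(\alpha_i-\alpha_j)\left(\frac{\partial\widetilde{h}}{\partial\alpha_i}-\frac{\partial\widetilde{h}}{\partial\alpha_j}\right)\leq 0 .
$$
As in the proof of Theorem~\ref{thm:main1min}, this expression equals $\psi_1'(\cdot)\,(\alpha_i-\alpha_j)\bigl(\overline{V}(\alpha_i)-\overline{V}(\alpha_j)\bigr)$. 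Moreover, $\psi_1'\circ\phi_1=-\mathrm{id}/K_1\circ\phi_1$ shows that $\overline{V}$ coincides with the function $\overline{V}^\ast$ of (\ref{eq:K1-dec}).

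\emph{Main obstacle.} The whole difficulty sits in the sign at this point. Since $\psi_1'\leq0$, the required inequality holds precisely when $(\alpha_i-\alpha_j)(\overline{V}^\ast(\alpha_i)-\overline{V}^\ast(\alpha_j))\geq 0$. Taken literally, a decreasing $\overline{V}^\ast$ gives the opposite sign, which is Schur-convexity, as in Theorem~\ref{thm:main1min}. So I would first re-derive the derivative of $\widetilde{h}$ carefully, tracking the sign of $\partial\Tbar/\partial\beta=-\partial T/\partial\beta$. I would then pin down which monotonicity of the $V^\ast$-type function the corollary's hypothesis actually encodes. Note that the corollary says ``the function $V^\ast$'', which is the parallel-system notation of (\ref{eq:K2-inc}), while citing (\ref{eq:K1-dec}). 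Read with that function, built from $T$ rather than $\Tbar$, decreasingness gives $(\alpha_i-\alpha_j)(\overline{V}(\alpha_i)-\overline{V}(\alpha_j))\geq0$ and hence the required Schur-concavity. Once the correct sign is in place, Theorem~\ref{MO1} together with Step 1 yields the two stochastic inequalities in the stated direction, $X_{1:n}(\balpha^*)\leq_{st}X_{1:n}(\balpha)\leq_{st}X_{1:n}(\overline{\balpha})$.
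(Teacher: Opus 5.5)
\textbf{There is a genuine gap, though you correctly spotted that the statement is wrong.}

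Your Step~1 and the reduction to a Schur property of $\balpha\mapsto\widetilde h(\balpha;\psi_1)(x)$ are the route the paper intends. The paper omits the proof and points to Corollary~\ref{cor:boxconst}, i.e.\ the majorization chain combined with Theorem~\ref{thm:main1min} for $\psi_2=\psi_1$.

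Your sign diagnosis is also right, and it is not a bookkeeping issue that can later be ``pinned down''. Since $\overline{V}^\ast=\overline{V}$, the stated hypothesis makes $\widetilde h(\cdot;\psi_1)(x)$ Schur-convex. That route therefore yields $X_{1:n}(\balpha^*)\geqst X_{1:n}(\balpha)\geqst X_{1:n}(\overline{\balpha})$, the reverse of what is claimed. Read literally, the statement is false:
\begin{itemize}
\item Take the independence generator and $T=\ToMO$ with $\theta=1$. Then $\overline{V}^\ast(\beta)=F/(\beta F+\Fbar)$, which is decreasing in $\beta$.
\item Take $n=2$, $a=\tfrac12$, $b=\tfrac32$, $c=2$. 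This gives $\balpha^*=(\tfrac12,\tfrac32)$ and $\overline{\balpha}=(1,1)$.
\item At a point with $F(x)=\tfrac12$, the two series survival functions are $\tfrac23\cdot\tfrac25=\tfrac4{15}$ and $\tfrac14$. Since $\tfrac4{15}>\tfrac14$, the claim $X_{1:2}(\balpha^*)\leqst X_{1:2}(\overline{\balpha})$ fails.
\end{itemize}

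So your proposal can only prove a modified statement, and the gap lies in the modification you choose. You assert that if the function (\ref{eq:K2-inc}) with $\psi_2=\psi_1$ is decreasing, then $(\alpha_i-\alpha_j)(\overline{V}(\alpha_i)-\overline{V}(\alpha_j))\ge 0$. The apparent reason is $\partial T/\partial\beta=-\partial\Tbar/\partial\beta$, but this is not a sign flip. That function equals $\frac{d}{d\beta}\phi_1(T(\beta,F))$, whereas $\overline{V}=\frac{d}{d\beta}\phi_1(\Tbar(\beta,F))$. The factor $\psi_1'\circ\phi_1$ is evaluated at $T$ in one and at $1-T$ in the other, so neither is minus the other.

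The implication does hold, but it needs an argument you do not give:
\begin{itemize}
\item If $\frac{d}{d\beta}\phi_1(T)$ is decreasing, then $\phi_1\circ T$ is concave in $\beta$.
\item Hence $T=\psi_1\circ(\phi_1\circ T)$ is convex, because $\psi_1$ is convex and nonincreasing.
\item Hence $\Tbar$ is concave, and $\phi_1\circ\Tbar$ is convex, because $\phi_1$ is convex and nonincreasing.
\item Therefore $\overline{V}$ is increasing.
\end{itemize}

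There are two cleaner repairs. You can assume $\overline{V}^\ast$ increasing, and then your Schur-concavity argument gives the stated chain verbatim. Or you can keep ``decreasing'' and reverse the chain, which is what Theorem~\ref{thm:main1min} actually delivers. Either way, state explicitly which corrected result you prove, rather than leaving it as a plan.
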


\subsection{Stochastic dominance between $(n-k)$-out-of-$n$ systems}
We now consider the $(n-k)$-out-of-$n$ system with dependent and heterogeneous components, which remains functional whenever at least $n-k$ components are still alive. With the notation introduced at the beginning of Section~\ref{sec:general}, assuming the components lifetime distributions are $F_{X_i}=T(\alpha_i,F)$, the lifetime of such a system is given by $X_{n-k:n}(\balpha)$, where $\balpha=(\alpha_1,\ldots,\alpha_n)$. 
The derivation of a stochastic dominance result between $(n-k)$-out-of-$n$ systems, stated in Theorem~\ref{thm:heterogeneous-coordinatewise-comparison} below, requires a number of preparatory results, characterising the distribution of $X_{n-k:n}$ and some of its relevant properties, which we present first.
We start by an auxiliary result, helping with the representation of the relevant transformations.
\begin{lemma}\label{dif_gen}
Let $m \ge 1$, let $g:\mathbb{R}\longrightarrow\mathbb{R}$ be $m$ times differentiable, and $h_1,\dots,h_m>0$. Define the iterated forward difference operator by
\[
\Delta_{h_1,\dots,h_m} g(a) :=	(\Delta_{h_m}\circ\cdots\circ\Delta_{h_1})g(a), \qquad	\Delta_h g(a):=g(a+h)-g(a).
\]
Then the following identities hold:
\begin{align}
\Delta_{h_1,\dots,h_m} g(a) &=
   \sum_{L\subseteq\{1,\dots,m\}} (-1)^{m-|L|}\, g\left(a+\sum_{r\in L} h_r\right), \label{eq:FD-expansion} \\[1.5ex]
\Delta_{h_1,\dots,h_m} g(a) &=
   \int_0^{h_1}\cdots\int_0^{h_m} g^{(m)}(a+u_1+\cdots+u_m)\,du_1\cdots du_m. \label{eq:FD-integral}
\end{align}
\end{lemma}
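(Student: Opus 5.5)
Both identities will be proved by induction on $m$, using only the linearity of the difference operators and the fundamental theorem of calculus. For $m=1$, \eqref{eq:FD-expansion} reads $\Delta_{h_1}g(a)=g(a+h_1)-g(a)$, which is the sum over $L=\{1\}$ and $L=\emptyset$ with signs $+1$ and $-1$. Identity \eqref{eq:FD-integral} reads $g(a+h_1)-g(a)=\int_0^{h_1}g'(a+u_1)\,du_1$, which holds because $g$ is differentiable. Strictly speaking this needs $g'$ integrable, so I would read ``$m$ times differentiable'' as $g^{(m)}$ being continuous, or at least locally integrable, and would state this tacit regularity assumption explicitly.

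For the inductive step of \eqref{eq:FD-expansion}, I assume the formula for $m-1$ and write $G:=\Delta_{h_1,\dots,h_{m-1}}g$. Then $\Delta_{h_1,\dots,h_m}g(a)=G(a+h_m)-G(a)$. I substitute the expansion of $G$ at the two points $a+h_m$ and $a$. The terms coming from $G(a+h_m)$ correspond bijectively to subsets $L\subseteq\{1,\dots,m\}$ with $m\in L$, and they keep the sign $(-1)^{m-1-|L\setminus\{m\}|}=(-1)^{m-|L|}$. The terms coming from $-G(a)$ correspond to subsets with $m\notin L$, and they carry the sign $-(-1)^{m-1-|L|}=(-1)^{m-|L|}$. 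Merging the two families over all $L\subseteq\{1,\dots,m\}$ gives the identity. This step is purely combinatorial bookkeeping.

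For the inductive step of \eqref{eq:FD-integral}, I first note that the operators $\Delta_{h}$ commute, and that $\Delta_h$ commutes with differentiation. I again set $G=\Delta_{h_1,\dots,h_{m-1}}g$. Applying the induction hypothesis to $g$ with $m-1$ steps gives
\[
G(b)=\int_0^{h_1}\!\!\cdots\!\int_0^{h_{m-1}} g^{(m-1)}(b+u_1+\cdots+u_{m-1})\,du_1\cdots du_{m-1}.
\]
Then $\Delta_{h_m}G(a)=G(a+h_m)-G(a)$ equals the same $(m-1)$-fold integral with the integrand replaced by $g^{(m-1)}(s+h_m)-g^{(m-1)}(s)$, where $s=a+u_1+\cdots+u_{m-1}$. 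By the fundamental theorem of calculus applied to $g^{(m-1)}$, this difference equals $\int_0^{h_m}g^{(m)}(s+u_m)\,du_m$. Fubini's theorem, which is justified by the continuity of $g^{(m)}$ on the compact box, then yields the $m$-fold integral.

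The only delicate point is this regularity requirement for the fundamental theorem of calculus and for Fubini's theorem. If one wants to avoid assuming that $g^{(m)}$ is continuous, an alternative is to derive \eqref{eq:FD-integral} directly from \eqref{eq:FD-expansion}. This means integrating $g^{(m)}$ successively in $u_m,u_{m-1},\dots$; each integration produces a difference in one variable, and the successive differences reproduce the alternating sum over subsets. That argument still needs each $g^{(j)}$ to be absolutely continuous. Everything else is routine, and the result follows.
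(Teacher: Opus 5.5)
Your proof is correct and follows essentially the same route as the paper: induction on $m$, grouping the subsets $L$ according to whether $m\in L$ for the expansion, and for the integral form writing $\Delta_{h_m}$ applied to the $(m-1)$-fold integral as the difference of integrands, then applying the fundamental theorem of calculus. Your remark that the integral identity tacitly needs $g^{(m)}$ to be integrable (e.g.\ continuous) makes explicit an assumption the paper leaves unstated in ``$m$ times differentiable''; this is harmless in the paper's application to $g=\psi'$, where $d$-monotonicity with $d\geq k+3$ makes $\psi^{(k+1)}$ continuous on $(0,\infty)$.
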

\begin{proof}
We prove both identities by induction on $m$.
\begin{enumerate}[(i)]
\item
\textit{For \eqref{eq:FD-expansion}}. When $m=1$, we have $\Delta_h g(a)=g(a+h)-g(a)$, which coincides with \eqref{eq:FD-expansion}. Assume the result holds for $m-1$. Then
\[
\Delta_{h_1,\dots,h_m} g(a)
=
\Delta_{h_m}\bigl(\Delta_{h_1,\dots,h_{m-1}} g\bigr)(a)
=
\Delta_{h_1,\dots,h_{m-1}} g(a+h_m) - \Delta_{h_1,\dots,h_{m-1}} g(a).
\]
Applying the induction hypothesis to both terms, we obtain
\[
\sum_{L\subseteq\{1,\dots,m-1\}} (-1)^{(m-1)-|L|} g\left(a+h_m+\sum_{r\in L} h_r\right)
-
\sum_{L\subseteq\{1,\dots,m-1\}} (-1)^{(m-1)-|L|} g\left(a+\sum_{r\in L} h_r\right).
\]
Grouping terms according to whether the index $m$ belongs to the subset or not, we recover exactly the expansion \eqref{eq:FD-expansion} for $m$, with coefficients $(-1)^{m-|L|}$.

\item
\textit{For \eqref{eq:FD-integral}.} The representation is obvious for $m=1$. Assume now that \eqref{eq:FD-integral} is true for $m-1$.
Then
\[
\Delta_{h_1,\dots,h_m} g(a) =
\Delta_{h_m}
\left[ \int_0^{h_1}\cdots\int_0^{h_{m-1}} g^{(m-1)}(a+u_1+\cdots+u_{m-1}) \,du_1\cdots du_{m-1} \right].
\]
Applying $\Delta_{h_m}$ inside the integral and using the fundamental theorem of calculus, we obtain
\[
\int_0^{h_1}\cdots\int_0^{h_{m-1}} \int_0^{h_m}	g^{(m)}(a+u_1+\cdots+u_m) \,du_m\,du_1\cdots du_{m-1},
\]
concluding the proof.
\end{enumerate}
\end{proof}

We now proceed to the characterisation of the distribution function $F_{n-k:n}^{(\balpha)}$  of $X_{n-k:n}(\balpha)$. 
\begin{proposition}\label{prop:heterogeneous-n-k}
Let $1\le k\le n-1$ and let $X_1,\dots,X_n$ whose joint distribution is described by an Archimedean copula with generator $\psi$. For $x\in\mathbb{R}$ such that $0<T(\alpha_i,F)(x)<1$ for all $i$, define
\begin{equation}
\label{eq:u-and-t}
u_i(x):=T(\alpha_i,F)(x), \qquad t_i(x):=\phi\bigl(u_i(x)\bigr),\qquad i=1,\dots,n,
\end{equation}
Then, with $\balpha=(\alpha_1,\ldots,\alpha_n)$,
\begin{equation}
\label{cdf_gen}
F_{n-k:n}^{(\balpha)}(x) =
\sum_{j=n-k}^{n} (-1)^{j-(n-k)}\binom{j-1}{n-k-1} \sum_{\substack{I\subseteq\{1,\dots,n\}\\ |I|=j}}	\psi\left(\sum_{i\in I} t_i(x)\right).
\end{equation}
\end{proposition}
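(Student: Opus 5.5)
Fix $x$ with $0<u_i(x)<1$ for all $i$, and write $u_i$, $t_i$ for short. The event $\{X_{n-k:n}\le x\}$ says that at least $n-k$ of the $X_i$ are $\le x$. The plan is to reduce the probability of this event to the probabilities ${\rm P}(X_i\le x,\ i\in I)$ via inclusion--exclusion, and then to read those off the copula.

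First, for any $I\subseteq\{1,\dots,n\}$, let the remaining coordinates tend to $+\infty$ in the copula representation \eqref{eq:cop-rep}. Since $\phi(1)=0$, Definition~\ref{def:arch} gives
\[
S_I:={\rm P}(X_i\le x,\ i\in I)=\psi\Bigl(\sum_{i\in I}t_i\Bigr).
\]
Set $s_j=\sum_{|I|=j}S_I$. This is exactly the inner sum in \eqref{cdf_gen}.

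Second, let $N=\sum_i \mathbf 1\{X_i\le x\}$. We want ${\rm P}(N\ge n-k)$ expressed through the $s_j$. The key identity is $s_j=\mathbb E\binom{N}{j}$: each subset $I$ of size $j$ contributes $\mathbf 1\{X_i\le x,\ i\in I\}$, and summing over $I$ counts the $j$-subsets of the indices where $X_i\le x$. With $r=n-k\ge1$, it then suffices to prove the pointwise combinatorial identity
\[
\sum_{j=r}^{n}(-1)^{j-r}\binom{j-1}{r-1}\binom{N}{j}=\mathbf 1\{N\ge r\},\qquad N\in\{0,\dots,n\},
\]
and take expectations.

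This identity is the main step. For $N<r$ every term vanishes. For $N\ge r$, I would use the standard relation $\sum_{j=r}^{N}(-1)^{j-r}\binom{j-1}{r-1}\binom{N}{j}=1$, which can be proved in either of two ways.
\begin{enumerate}[(i)]
\item By induction on $N$, using Pascal's rule $\binom{N}{j}=\binom{N-1}{j}+\binom{N-1}{j-1}$ together with $\binom{j-1}{r-1}=\binom{j}{r-1}-\binom{j-1}{r-2}$.
\item By generating functions: write $\binom{N}{j}(-1)^{j}$ as a coefficient and use the Vandermonde-type sum $\sum_{j}(-1)^j\binom{N}{j}\binom{j-1}{r-1}$.
\end{enumerate}
Equivalently, one can cite the classical Schuette--Nesbitt / Jordan formula for the probability that at least $r$ of $n$ events occur, applied to the events $\{X_i\le x\}$.

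Combining the two steps gives \eqref{cdf_gen}. Lemma~\ref{dif_gen} is not needed for this identity; it would only enter if one preferred to organise the alternating sums over subsets as iterated differences of $\psi$, for instance in later monotonicity arguments.
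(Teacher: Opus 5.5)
Your proof is correct and follows the same route as the paper. You rewrite $\{X_{n-k:n}\le x\}$ as the event that at least $n-k$ of the events $\{X_i\le x\}$ occur, apply the inclusion--exclusion formula for that event, and evaluate each $\mathrm{P}(X_i\le x,\ i\in I)$ as $\psi\bigl(\sum_{i\in I}t_i\bigr)$ through the Archimedean copula. The only difference is that you justify the inclusion--exclusion coefficients (via $s_j=\mathbb{E}\binom{N}{j}$ and the binomial identity) and the marginalisation step (via $\phi(1)=0$), both of which the paper takes as standard.
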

\begin{proof}
Set $B_i(x):=\{X_i\le x\}$, $i=1,\dots,n$. Then $\{X_{n-k:n}\le x\}=\left\{\sum_{i=1}^n \mathbf{1}_{B_i(x)}\ge n-k\right\}$.
By inclusion-exclusion for the probability that at least $n-k$ among the events $B_1(x),\dots,B_n(x)$ occur, we get
\[
\mathbb{P}\left(\sum_{i=1}^n \mathbf{1}_{B_i(x)}\ge n-k\right) =
\sum_{j=n-k}^{n} (-1)^{j-(n-k)}	\binom{j-1}{n-k-1} \sum_{\substack{I\subseteq\{1,\dots,n\}\\ |I|=j}} \mathbb{P}\left(\bigcap_{i\in I} B_i(x)\right).
\]
Since the joint distribution is described by the Archimedean copula $C_\psi$, if follows that
\[
\mathbb{P}\left(\bigcap_{i\in I} B_i(x)\right) =
C_\psi\bigl((u_i(x))_{i\in I}\bigr)=
\psi\left(\sum_{i\in I}\phi(u_i(x))\right)=	\psi\left(\sum_{i\in I} t_i(x)\right).
\]
Substituting this in the previous identity yields \eqref{cdf_gen}.
\end{proof}


\begin{proposition}\label{heterogeneous-derivative}
Assume the same conditions as in Proposition \ref{prop:heterogeneous-n-k}. Moreover, assume that $T$ is differentiable with respect to its first argument, that $\psi$ is $d$-monotone with $d\geq 3$, and that its inverse generator $\phi=\psi^{-1}$ is differentiable on the relevant interval. Then, with $\balpha=(\alpha_1,\ldots,\alpha_n)$, for each $\ell=1,\dots,n$,
\begin{equation}
\label{derivative_gen}	
\frac{\partial}{\partial \alpha_\ell}F_{n-k:n}^{(\balpha)}(x)=
\phi^\prime\bigl(u_\ell(x)\bigr)\,\frac{\partial T}{\partial \alpha}(\alpha_\ell,F)(x)
\sum_{j=n-k}^{n} (-1)^{j-(n-k)}	\binom{j-1}{n-k-1}	\sum_{\substack{I\subseteq\{1,\dots,n\}\\ |I|=j,\ \ell\in I}}	\psi^\prime\left(\sum_{i\in I} t_i(x)\right).
\end{equation}
\end{proposition}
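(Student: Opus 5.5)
We will differentiate the representation \eqref{cdf_gen} of Proposition~\ref{prop:heterogeneous-n-k} term by term. This formula expresses $F_{n-k:n}^{(\balpha)}(x)$ as a finite linear combination, with coefficients $(-1)^{j-(n-k)}\binom{j-1}{n-k-1}$ independent of $\balpha$, of the functions
\[
\balpha\mapsto \psi\Bigl(\sum_{i\in I} t_i(x)\Bigr), \qquad I\subseteq\{1,\dots,n\},\ |I|\ge n-k .
\]
Each $t_i(x)$ depends on $\balpha$ only through the coordinate $\alpha_i$, via $t_i(x)=\phi(T(\alpha_i,F)(x))$. Consequently, for fixed $\ell$, every term with $\ell\notin I$ is constant in $\alpha_\ell$, and its partial derivative vanishes. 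This is exactly why the inner sum in \eqref{derivative_gen} is restricted to the subsets $I$ with $\ell\in I$.

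For a subset with $\ell\in I$ we apply the chain rule twice. First,
\[
\frac{\partial}{\partial\alpha_\ell}\psi\Bigl(\sum_{i\in I}t_i(x)\Bigr)=\psi^\prime\Bigl(\sum_{i\in I}t_i(x)\Bigr)\frac{\partial t_\ell(x)}{\partial\alpha_\ell}.
\]
Second,
\[
\frac{\partial t_\ell(x)}{\partial\alpha_\ell}=\phi^\prime\bigl(u_\ell(x)\bigr)\frac{\partial T}{\partial\alpha}(\alpha_\ell,F)(x).
\]
The factor $\phi^\prime(u_\ell(x))\,\partial T/\partial\alpha(\alpha_\ell,F)(x)$ does not depend on $I$ or $j$, so it can be pulled outside both sums. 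This yields \eqref{derivative_gen} directly.

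The only point requiring care is justifying the differentiability used in the chain rule. Since $\psi$ is $d$-monotone with $d\ge 3$, it is differentiable up to order $d-2\ge 1$ on $(0,\infty)$, so $\psi^\prime$ exists there. The arguments $\sum_{i\in I}t_i(x)$ lie in $(0,\infty)$, because $0<u_i(x)<1$ forces $0<t_i(x)<\infty$. The differentiability of $\phi$ on the relevant interval is an explicit assumption, and so is the differentiability of $T$ in its first argument. I expect this to be the only (mild) obstacle: checking that the restriction $0<T(\alpha_i,F)(x)<1$ keeps every argument inside the open interval where these derivatives exist. Once that is confirmed, the interchange of differentiation with the finite sums is trivial, and the proof is complete.
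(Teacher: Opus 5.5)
Your proposal is correct and follows the paper's proof exactly. Both differentiate \eqref{cdf_gen} term by term, note that only subsets $I$ with $\ell\in I$ depend on $\alpha_\ell$, and apply the chain rule through $\psi$, $\phi$ and $T$, pulling the common factor $\phi^\prime(u_\ell(x))\,\frac{\partial T}{\partial\alpha}(\alpha_\ell,F)(x)$ outside the sums; your added check that each argument $\sum_{i\in I}t_i(x)$ lies in $(0,\infty)$, where $\psi^\prime$ exists by $d$-monotonicity with $d\ge 3$, is a detail the paper leaves implicit.
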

\begin{proof}
First note that, when differentiating \eqref{cdf_gen} with respect to $\alpha_\ell$, only the subsets $I$ such that $\ell\in I$ contribute. For each such subset,
\[
\frac{\partial}{\partial \alpha_\ell}
\psi\left(\sum_{i\in I} t_i(x)\right) =
\psi^\prime\left(\sum_{i\in I} t_i(x)\right)\frac{\partial t_\ell(x)}{\partial \alpha_\ell}.
\]
Now, using (\ref{eq:u-and-t}) and differentiating concludes the proof.
\end{proof}

To be able to prove some convenient monotonicity results, that will imply the stochastic dominance relations between different $(n-k)$-out-of-$n$ systems, we need a different representation of the derivative (\ref{derivative_gen}). This is proved in the next result.
\begin{proposition}\label{heterogeneous-difference}
Assume the same conditions as in Proposition \ref{heterogeneous-derivative}. Fix $\ell\in\{1,\dots,n\}$ and write $N_\ell:=\{1,\dots,n\}\setminus\{\ell\}$.	Then
\begin{align}
&\sum_{j=n-k}^{n} (-1)^{j-(n-k)} \binom{j-1}{n-k-1}	\sum_{\substack{I\subseteq\{1,\dots,n\}\\ |I|=j,\ \ell\in I}}\psi^\prime\left(\sum_{i\in I} t_i(x)\right) \notag\\[1.5ex]
&\label{hetdif}\qquad\quad =	(-1)^k \sum_{\substack{J\subseteq N_\ell\\ |J|=n-k-1}} \Delta_{(t_i(x))_{i\in N_\ell\setminus J}}\psi^\prime\left(t_\ell(x)+\sum_{j\in J}t_j(x)\right),
\end{align}
where $\Delta_{(t_i(x))_{i\in N_\ell\setminus J}}$ denotes the iterated forward difference with increments $(t_i(x))_{i\in N_\ell\setminus J}$. Consequently,
\begin{align}
\label{hetder}
\frac{\partial}{\partial \alpha_\ell}F^{(\alpha)}_{n-k:n}(x) =
(-1)^k \phi^\prime(u_\ell(x))\,\frac{\partial T}{\partial \alpha}(\alpha_\ell,F)(x)
  \sum_{\substack{J\subseteq N_\ell\\ |J|=n-k-1}} \Delta_{(t_i(x))_{i\in N_\ell\setminus J}}
  \psi^\prime\left(t_\ell(x)+\sum_{j\in J}t_j(x)\right).
\end{align}
\end{proposition}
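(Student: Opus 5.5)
The identity \eqref{hetdif} is purely combinatorial, so I would prove it by expanding both sides as signed sums of the values $\psi^\prime\bigl(t_\ell(x)+\sum_{i\in K}t_i(x)\bigr)$, $K\subseteq N_\ell$, and matching coefficients. Throughout I write $g:=\psi^\prime$, $m:=n-k-1\ge 0$, and suppress the argument $x$. Only the finite-difference expansion \eqref{eq:FD-expansion} of Lemma~\ref{dif_gen} will be used, and it is a purely algebraic identity. The integral form \eqref{eq:FD-integral} is not needed here; the $d$-monotonicity assumption matters only later, for sign arguments.

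\emph{Left-hand side.} Every $I\subseteq\{1,\dots,n\}$ with $\ell\in I$ and $|I|=j$ can be written uniquely as $I=\{\ell\}\cup K$, where $K\subseteq N_\ell$ and $|K|=j-1$. The range $j=n-k,\dots,n$ becomes $|K|=m,\dots,n-1$. The sign is $(-1)^{j-(n-k)}=(-1)^{|K|-m}$ and the binomial coefficient is $\binom{j-1}{n-k-1}=\binom{|K|}{m}$. Hence the left-hand side equals
\[
\sum_{\substack{K\subseteq N_\ell\\ |K|\ge m}} (-1)^{|K|-m}\binom{|K|}{m}\, g\Bigl(t_\ell+\sum_{i\in K}t_i\Bigr).
\]

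\emph{Right-hand side.} For fixed $J$ with $|J|=m$, the increment set $N_\ell\setminus J$ has exactly $n-1-m=k$ elements. Applying \eqref{eq:FD-expansion} with $a=t_\ell+\sum_{j\in J}t_j$ gives
\[
\Delta_{(t_i)_{i\in N_\ell\setminus J}}\, g(a)=\sum_{L\subseteq N_\ell\setminus J}(-1)^{k-|L|}\, g\Bigl(t_\ell+\sum_{i\in J\cup L}t_i\Bigr).
\]
Summing over $J$, I regroup by $K:=J\cup L$, which is a disjoint union. A given $K\subseteq N_\ell$ with $|K|\ge m$ arises exactly once for each $m$-subset $J\subseteq K$, with $L=K\setminus J$. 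That gives $\binom{|K|}{m}$ occurrences, each with sign $(-1)^{k-(|K|-m)}$. Multiplying by the prefactor $(-1)^k$ turns the sign into $(-1)^{2k-|K|+m}=(-1)^{|K|-m}$. This reproduces the left-hand side coefficient by coefficient, which proves \eqref{hetdif}.

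\emph{Consequence.} Substituting \eqref{hetdif} into the formula \eqref{derivative_gen} of Proposition~\ref{heterogeneous-derivative} yields \eqref{hetder} directly. The only real obstacle is the bookkeeping in the regrouping step: the multiplicity $\binom{|K|}{m}$ and the parity of the sign. The edge case $k=n-1$ (so $m=0$ and $J=\emptyset$) should be checked separately, but it is covered by the same computation since $\binom{|K|}{0}=1$.
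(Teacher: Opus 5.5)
Your proposal is correct and takes essentially the same approach as the paper. You expand each forward difference via \eqref{eq:FD-expansion}, regroup the terms by the disjoint union $K=J\cup L\subseteq N_\ell$ (the paper's $A$), count the $\binom{|K|}{n-k-1}$ admissible choices of $J$, and check that $(-1)^k(-1)^{k-|L|}=(-1)^{|K|-(n-k-1)}$ matches the left-hand side coefficient; \eqref{hetder} then follows by substitution into \eqref{derivative_gen}, exactly as in the paper.
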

\begin{proof}
We only need to prove \eqref{hetdif}, as \eqref{hetder} is then an immediate from Proposition~\ref{heterogeneous-derivative}.
Fix $J\subseteq N_\ell$ with $|J|=n-k-1$. Then $|N_\ell\setminus J|=k$, and by	Lemma~\ref{dif_gen} and multiplying by $(-1)^k$, we find
\[
(-1)^k\Delta_{(t_i(x))_{i\in N_\ell\setminus J}}\psi^\prime\left(t_\ell(x)+\sum_{j\in J}t_j(x)\right)=
\sum_{L\subseteq N_\ell\setminus J}	(-1)^{|L|}	\psi^\prime\left(t_\ell(x)+\sum_{j\in J}t_j(x)+\sum_{i\in L}t_i(x)\right).
\]
We now determine the coefficient of a fixed term
\[
\psi^\prime\left(\sum_{i\in I} t_i(x)\right),\qquad \ell\in I,\quad |I|=m,
\]
in the right-hand side of \eqref{hetdif}. Denote $I=\{\ell\}\cup A$, $A\subseteq N_\ell$, $|A|=m-1$.
A term $\psi^\prime\left(\sum_{i\in I} t_i(x)\right)$ appears in the above expansion if and only if
\[
t_\ell(x)+\sum_{j\in J}t_j(x)+\sum_{i\in L}t_i(x)=t_\ell(x)+\sum_{a\in A}t_a(x),
\]
which is equivalent to $J\cup L = A$ and $J\cap L=\varnothing$. Hence such a term appears precisely when $J\subseteq A$ and $L=A\setminus J$. In addition, we must have $|J|=n-k-1$.
Thus the number of occurrences of the term is exactly the number of subsets $J\subseteq A$ with $|J|=n-k-1$, namely $\binom{m-1}{n-k-1}$.
For each such occurrence, the coefficient is $(-1)^{|L|}$. Since $|L|=|A|-|J|=(m-1)-(n-k-1)=m-(n-k)$, the sign of this coefficient is $(-1)^{|L|}=(-1)^{m-(n-k)}$.
Therefore, the total coefficient of $\psi^\prime\left(\sum_{i\in I} t_i(x)\right)$	in the right-hand side is
\[
(-1)^{m-(n-k)}\binom{m-1}{n-k-1},
\]
which coincides exactly with the coefficient of the same term in the left-hand side of \eqref{hetdif}.
Since this holds for every subset $I$ such that $\ell\in I$, the identity \eqref{hetdif} follows. This completes the proof.
\end{proof}

\begin{corollary}\label{heterogeneous-sign}
Assume the conditions of Proposition \ref{heterogeneous-difference}. Moreover, assume that the copula generator $\psi$ is $d$-monotone with $d\geq k+3$.
Then, for each $\ell=1,\dots,n$,
\begin{equation}
\label{sign}\operatorname{sgn}\!\left(
\frac{\partial}{\partial \alpha_\ell}F_{n-k:n}^{(\balpha)}(x)\right)=
\operatorname{sgn}\!\left(	\frac{\partial T}{\partial \alpha}(\alpha_\ell,F)(x)\right).
\end{equation}
\end{corollary}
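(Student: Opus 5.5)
Starting from the representation \eqref{hetder} of Proposition~\ref{heterogeneous-difference}, the sign of $\frac{\partial}{\partial\alpha_\ell}F^{(\balpha)}_{n-k:n}(x)$ is the product of three signs: $(-1)^k$, $\phi'(u_\ell(x))$, and the sum of iterated forward differences, times $\frac{\partial T}{\partial\alpha}(\alpha_\ell,F)(x)$. The claim \eqref{sign} follows once we show that the product of the first three factors is nonnegative (strictly positive, to get equality of signs rather than an inequality).

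The factor $\phi'(u_\ell(x))$ is handled first. Since $\psi$ is continuous, strictly decreasing on the region where it is positive, and $0<u_\ell(x)<1$, its inverse $\phi$ is decreasing. Hence $\phi'(u_\ell(x))=1/\psi'(t_\ell(x))<0$, using $\psi'<0$ at interior points.

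Next I treat each summand $\Delta_{(t_i(x))_{i\in N_\ell\setminus J}}\psi'(a)$, where $a=t_\ell(x)+\sum_{j\in J}t_j(x)$. There are $k$ increments, all $t_i(x)>0$ because $u_i(x)<1$. By \eqref{eq:FD-integral} of Lemma~\ref{dif_gen} with $g=\psi'$ and $m=k$, this summand equals
\[
\int_0^{t_{i_1}}\cdots\int_0^{t_{i_k}}\psi^{(k+1)}(a+u_1+\cdots+u_k)\,du_1\cdots du_k .
\]
The hypothesis $d\geq k+3$ gives $d-2\geq k+1$, so $\psi^{(k+1)}$ exists and $(-1)^{k+1}\psi^{(k+1)}\geq0$. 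Therefore each summand has sign $(-1)^{k+1}$ or vanishes. Multiplying by $(-1)^k$ makes the whole sum over $J$ nonpositive. Combined with $\phi'(u_\ell)<0$, the prefactor of $\frac{\partial T}{\partial\alpha}$ in \eqref{hetder} is $\geq0$, and the signs agree.

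The main obstacle is strictness: we need the sum over $J$ to be nonzero so that the sign is not merely weakly preserved. I would first try to get this from the $d$-monotonicity structure. The function $(-1)^{d-2}\psi^{(d-2)}$ is nonincreasing, convex and tends to $0$, and if it vanished at some point it would vanish from there on, which forces $\psi$ to be polynomial beyond that point and hence $\psi\equiv0$ there. This rules out such vanishing on the relevant range $\{\psi>0\}$, giving $(-1)^{k+1}\psi^{(k+1)}>0$ and hence a strictly positive integral. When $k+1=d-2$ the derivative is only known to exist a.e., and I would instead argue through the one-sided derivative. Failing a clean strictness argument, I would state \eqref{sign} in the weak sense, which is all that the subsequent monotonicity comparisons need.
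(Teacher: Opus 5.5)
Your argument is correct and is essentially the paper's proof. Both use the integral form \eqref{eq:FD-integral} of each $k$-fold forward difference in \eqref{hetder}, $d\ge k+3$ giving $(-1)^{k+1}\psi^{(k+1)}\ge 0$, the factor $(-1)^k$, and $\phi'(u_\ell(x))<0$. Your concern about strictness is a genuine refinement that the paper glosses over: for strict generators your positivity argument goes through, and no one-sided-derivative workaround is needed since the definition gives $\psi^{(d-2)}$ everywhere. For non-strict generators such as Clayton with $\gamma<0$, however, all values $\psi\bigl(t_\ell(x)+\sum_{j\in J}t_j(x)\bigr)$ can vanish, making the derivative $0$ even when $\frac{\partial T}{\partial\alpha}\neq 0$. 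So in general only the weak form you fall back on holds, and that is all Theorem~\ref{thm:heterogeneous-coordinatewise-comparison} uses.
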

\begin{proof}
We apply the integral representation in Lemma~\ref{dif_gen} to	each forward difference in \eqref{hetder}. Since $|N_\ell\setminus J|=k$, every such term has the form
\[
\int_0^{t_{i_1}(x)}\cdots\int_0^{t_{i_k}(x)}
\psi^{(k+1)}\left(t_\ell(x)+\sum_{j\in J}t_j(x)+u_1+\cdots+u_k\right)\,du_1\cdots du_k.
\]
By assumption, as $\psi$ is $d$-monotone, each of these integrals has sign $(-1)^{k+1}$. Therefore the whole sum over $J$ has sign $(-1)^{k+1}$, and multiplication by the factor $(-1)^k$ makes the bracket negative.
	Finally, since $\phi$ is decreasing, we have $\phi'(u_\ell(x))<0$. Hence
\[
\operatorname{sgn}\!\left(
\frac{\partial}{\partial \alpha_\ell}F_{n-k:n}^{(\balpha)}(x) \right)=
\operatorname{sgn}\!\left(\frac{\partial T}{\partial \alpha}(\alpha_\ell,F)(x)\right).
\]
\end{proof}

We may finally prove that the monotonicity of the functional $T$ implies stochastic dominance relations between the $(n-k)$-out-of-$n$ systems.
\begin{theorem}\label{thm:heterogeneous-coordinatewise-comparison}
Assume the conditions of Corollary \ref{heterogeneous-sign}.
    \begin{enumerate}
	\item 
    If $T$ is increasing in its first argument, that is, $\alpha\le \beta$ implies that $T(\alpha,F)(x)\le T(\beta,F)(x)$, for every $x\in\mathbb{R}$, then, for each fixed $x\in\mathbb{R}$, the map $\balpha=(\alpha_1,\dots,\alpha_n)\longmapsto F_{n-k:n}^{(\balpha)}(x)$	is increasing in each coordinate.
    
    \item 
    If $T$ is decreasing in its first argument, that is, $\alpha\le \beta$ implies that $T(\alpha,F)(x)\ge T(\beta,F)(x)$, for every $x\in\mathbb{R}$, then, for each fixed $x\in\mathbb{R}$, the same map is decreasing in each coordinate.
	\end{enumerate}
Consequently, if $\balpha=(\alpha_1,\ldots,\alpha_n)$ and $\bbeta=(\beta_1,\ldots,\beta_n)$ are such that $\alpha_i\le \beta_i$, for every $i=1,\dots,n$, then
    \begin{enumerate}
    \item
    If $T$ is increasing in its first argument, then $X_{n-k:n}(\balpha)\geqst X_{n-k:n}(\bbeta)$,
    \item
    If $T$ is decreasing in its first argument, then $X_{n-k:n}(\balpha)\leqst X_{n-k:n}(\bbeta)$.
	\end{enumerate}
\end{theorem}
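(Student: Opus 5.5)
The plan is to deduce the coordinatewise monotonicity directly from Corollary~\ref{heterogeneous-sign}, and then obtain the stochastic dominance by moving along a path that changes one coordinate at a time.

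\emph{Step 1: monotonicity in each coordinate.} Fix $x\in\mathbb{R}$. First suppose $0<T(\alpha_i,F)(x)<1$ for all $i$, so that the representation (\ref{cdf_gen}) and the derivative formula (\ref{hetder}) are available. If $T$ is increasing in its first argument and differentiable there, then $\frac{\partial T}{\partial\alpha}(\alpha_\ell,F)(x)\ge 0$. By (\ref{sign}), $\frac{\partial}{\partial\alpha_\ell}F_{n-k:n}^{(\balpha)}(x)\ge0$, so $\alpha_\ell\mapsto F^{(\balpha)}_{n-k:n}(x)$ is nondecreasing by the mean value theorem. The decreasing case follows by reversing the sign.

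The boundary cases, where some $u_i(x)\in\{0,1\}$, need a separate argument. For these I would avoid derivatives and use a direct argument: $\{X_{n-k:n}\le x\}$ is an increasing event in the indicators $\mathbf 1_{\{X_i\le x\}}$. Alternatively, continuity of (\ref{cdf_gen}) in the $u_i$ with $\psi(\infty)=0$ and $\phi(1)=0$ extends the monotonicity to the closure. I expect this boundary bookkeeping to be the only delicate point. I would handle it by noting that $F^{(\balpha)}_{n-k:n}(x)$ is a continuous function of $(u_1,\dots,u_n)\in[0,1]^n$ that is nondecreasing in each $u_i$ on the open cube, and hence also on the closed cube.

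\emph{Step 2: from coordinatewise monotonicity to the order.} Given $\alpha_i\le\beta_i$, build the chain
\[
\balpha=\balpha^{(0)},\ \balpha^{(1)}=(\beta_1,\alpha_2,\dots,\alpha_n),\ \dots,\ \balpha^{(n)}=\bbeta,
\]
where consecutive vectors differ only in one coordinate, which increases. By Step~1, in the increasing case $F^{(\balpha)}_{n-k:n}(x)\le F^{(\bbeta)}_{n-k:n}(x)$ for every $x$. Equivalently, $\Fbar_{X_{n-k:n}(\balpha)}(x)\ge \Fbar_{X_{n-k:n}(\bbeta)}(x)$, which by Definition~\ref{def:st} is $X_{n-k:n}(\balpha)\geqst X_{n-k:n}(\bbeta)$. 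The decreasing case gives the reverse inequality in the same way.

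Here $X_{n-k:n}(\bbeta)$ is understood as built with the same copula $\psi$, as in the setting of Corollary~\ref{heterogeneous-sign}.
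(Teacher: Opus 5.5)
Your proposal is correct and follows the paper's proof: coordinatewise monotonicity of $\balpha\mapsto F^{(\balpha)}_{n-k:n}(x)$ comes from the sign identity (\ref{sign}) of Corollary~\ref{heterogeneous-sign}, and then a chain of vectors changing one coordinate at a time gives the stochastic order. Your handling of points where some $T(\alpha_i,F)(x)\in\{0,1\}$ fills in a detail the paper passes over. Moreover, the direct argument you mention there, namely $F^{(\balpha)}_{n-k:n}(x)={\rm P}\bigl(\#\{i: U_i\le T(\alpha_i,F)(x)\}\ge n-k\bigr)$ with $(U_1,\ldots,U_n)\sim C_\psi$, proves the whole statement for any copula, without derivatives or $d$-monotonicity.
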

\begin{proof}
We only prove assertion 1., as the decreasing case is handled analogously, reversing inequalities. Therefore, $T$ being increasing in its first argument means that $\frac{\partial T}{\partial \alpha}\ge 0$ whenever the derivative exists, so Corollary~\ref{heterogeneous-sign} yields $\frac{\partial}{\partial \alpha_\ell}F_{n-k:n}^{(\balpha)}(x)\ge 0$, for every $\ell=1,\dots,n$. Hence the map $\balpha\mapsto F_{n-k:n}^{(\balpha)}(x)$ is increasing in each coordinate.
	
Now suppose that $\alpha_i\le \beta_i$ for all $i=1,\ldots,n$, define the intermediate vectors $\boldsymbol{\gamma}^{(0)}:=(\alpha_1,\dots,\alpha_n)=\balpha$,
\[
\boldsymbol{\gamma}^{(m)}:=(\beta_1,\dots,\beta_m,\alpha_{m+1},\dots,\alpha_n),\qquad m=1,\dots,n-1,
\]
and $\boldsymbol{\gamma}^{(n)}=(\beta_1,\dots,\beta_n)=\bbeta$. By coordinatewise monotonicity, when $T$ is increasing we obtain
\[
F_{n-k:n}^{(\balpha)}(x)= F_{n-k:n}^{(\boldsymbol{\gamma}^{(0)})}(x)\le F_{n-k:n}^{(\boldsymbol{\gamma}^{(1)})}(x) \le \cdots \le	F_{n-k:n}^{(\boldsymbol{\gamma}^{(n)})}(x) = F_{n-k:n}^{(\bbeta)}(x),
\]
for every $x\in\mathbb{R}$, 
equivalently $X_{n-k:n}(\balpha)\geqst X_{n-k:n}(\bbeta)$.
\end{proof}
%

The following example shows that an extension of the previous result replacing the coordinatewise monotonicity of the parameters by a majorization relationship is not achievable.
\begin{example}
Take $\mathcal{E}(x)=1-e^{-x}$, for $x\geq 0$, the standard exponential, and the functional $T(\alpha,\mathcal{E})(x)=1-(1-\mathcal{E}(x))^\alpha=1-e^{-\alpha x}$, where $\alpha>0$. Consider a system with $n=3$ components with lifetimes $T(\alpha_i,\mathcal{E})$, for $i=1,2,3$, whose joint distribution is described by an Archimedean copula with Clayton generator $\psi(t)=(1+t)^{-1}$, for $t\geq 0$. With $\balpha=(\alpha_1,\alpha_2,\alpha_3)$, the lifetime of a 2-out-of-3 system is $X_{2:3}(\balpha)$, whose distribution function is obtained by using (\ref{cdf_gen}):
\[
F_{2:3}^{(\balpha)}(x)=\psi(t_1+t_2)+\psi(t_1+t_3)+\psi(t_2+t_3)-2\psi(t_1+t_2+t_3),
\]
where $t_i=(e^{\alpha_ix}-1)^{-1}$. A direct differentiation with respect to $\alpha_\ell$ gives
\[
\frac{\partial F^{(\balpha)}_{2:3}}{\partial \alpha_\ell}(x)
= t_\ell^\prime
\left(\sum_{j\ne \ell}\psi^\prime(t_\ell+t_j)- 2\psi^\prime(t_1+t_2+t_3)\right),
\]
where $t_\ell^\prime=-xe^{\alpha_\ell x}(e^{\alpha_\ell x}-1)^{-2}$. 
Choosing now $\balpha=(1,2.5,4)$, the plots for $(\alpha_\ell-\alpha_m)\left(\frac{\partial F^{(\balpha)}_{2:3}}{\partial \alpha_\ell}-\frac{\partial F^{(\balpha)}_{2:3}}{\partial \alpha_m}\right)$ are shown in Figure~\ref{fig:f1}.
\begin{figure}[h!]
\centering
\includegraphics[scale=.35]{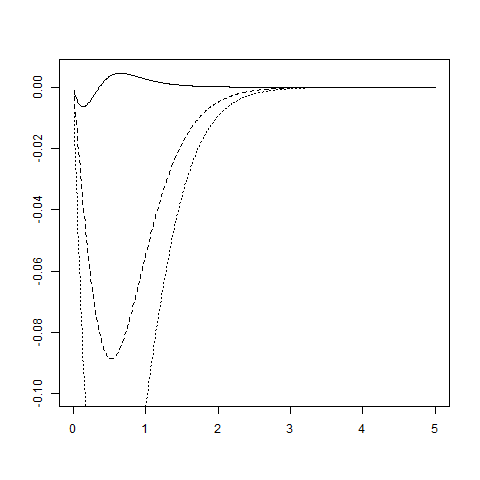}
\caption{Plots for $(\alpha_\ell-\alpha_m)\left(\frac{\partial F^{(\balpha)}_{2:3}}{\partial \alpha_\ell}-\frac{\partial F^{(\balpha)}_{2:3}}{\partial \alpha_m}\right)$: $(\ell,m)=(1,2)$ (solid), $(\ell,m)=(1,3)$ (dashed), $(\ell,m)=(2,3)$ (dotted).}
\label{fig:f1}
\end{figure}
Although when taking $(\ell,m)=(1,3)$ or $(2,3)$ the curve has a constant sign, the case $(\ell,m)=(1,2)$ changes sign as $x$ ranges $(0,+\infty)$. Hence, the map $\balpha\longmapsto F^{(\balpha)}_{2:3}(x)$ is neither Schur-convex nor Schur-concave.
\end{example} 

\section{Stochastic Comparisons for specific transformation models}

{We now demonstrate how the proposed framework can be applied to standard system structures. In many practical settings, systems consist of dependent and heterogeneous components whose distributions can be viewed as transformations of a common baseline model. In this context, the marginal distributions are expressed as $F_{X_i}=T(\alpha_i,F)$, where $F$ denotes the baseline distribution and the transformation $T$ reflects deviations across components. The parameters $\alpha_i$ therefore regulate the relative behaviour of individual components within a unified modelling scheme, while dependence among them is described through Archimedean copulas.}

{Within this setting, the maximum and minimum order statistics naturally represent system performance, corresponding to best- and worst-case scenarios, respectively. The results established in Section~\ref{sec:general} provide general conditions under which such systems can be compared in terms of stochastic ordering. In particular, monotonicity of the transformation $T$ leads to ordering results for parallel systems, whereas additional conditions involving the function $V$ allow majorization of the parameter vector to determine the ordering of extrema. In this way, the analysis makes explicit how heterogeneity in component characteristics influences overall system behaviour.}
From a methodological standpoint, the results highlight that stochastic ordering of system performance is driven by the interplay between three key elements: the monotonicity properties of the transformation, the dispersion of the parameter vector as described by majorization, and the dependence structure induced by the copula. This perspective provides a general mechanism that both extends and consolidates existing results, which are often confined to specific models or independence assumptions.

The properties of parallel systems will depend on the particular $V$ or $V^\ast$ function introduced in Theorem~\ref{thm:main1} and Corollary~\ref{cor:main1}, or their counterparts corresponding to series systems. Hence it is useful to have a description of the composition $\psi^\prime\circ\phi$, or of the quotient $-\tfrac{\psi}{\psi^\prime}$, where $\psi$ is the generator of each of the Archimedean copulas of interest and $\phi$ the corresponding inverse. These functions are given in Table~\ref{tab:Arch-fs}.

\begin{table}[h!]
	\centering
	\caption{Auxiliary functions for selected Archimedean copulas.}
	\label{tab:Arch-fs}
	\medskip
	\begin{tabular}{llll}
		\hline
		Copula & $\psi^\prime(t)$ & $\psi^\prime\circ\phi(t)$ & $K(t)=-\tfrac{\psi(t)}{\psi^\prime(t)}$ \\
		\hline
		Independence 
		& $e^{-t}$ 
		& $-t$ 
		& 1 \\[6pt]
		
		Clayton 
		& $-(1+\gamma t)^{-(\gamma+1)/\gamma}$ 
		& $-t^{\gamma+1}$ 
		& $1+\gamma t$ \\[10pt]

		Frank
		& $-\frac1\gamma\frac{e^{-t}(e^{-\gamma}-1)}{e^{-t}(e^{-\gamma}-1)+1}$ 
		& $\frac1\gamma(1-e^{\gamma t})$ 
		& $-\frac{e^{-t}(e^{-\gamma}-1)}{e^{-t}(e^{-\gamma}-1)+1} \log\left(e^{-t}(e^{-\gamma}-1)+1\right)$
        \\[10pt]
		
		Gumbel 
		& $-\frac1\gamma t^{(1-\gamma)/\gamma}\exp(-t^{1/\gamma})$ 
		& $-\frac{t}{\gamma}(-\log t)^{1-\gamma}$ 
		& $\gamma t^{(\gamma-1)/\gamma}$ \\[10pt]
		
		AMH 
		& $-\frac{(1-\gamma)e^t}{(e^t-\gamma)^2}$ 
		& $-\frac{t(1-\gamma+\gamma t)}{1-\gamma}$
		& $1-\gamma e^{-t}$ \\
		\hline
	\end{tabular}
\end{table}

\subsection{The PHR model}
As mentioned above, in Example~\ref{ex:exp} we considered a specific choice for the operator $T$ leading to exponential distributions with different hazard rates. {More generally, this construction gives rise to a parametric family of distributions that includes several widely used models}. Let us define, for $\beta>0$ and $F\in\PR$,
\begin{equation}
\label{eq:Te}
T_e(\beta,F)=1-(1-F)^\beta.
\end{equation}
This transformation corresponds to the PHR models with baseline distribution $F$, as the tail $1-T_e(\beta,F)=\Fbar^\beta$. It also includes the general K-G family introduced in \cite{Gauss2011}, which considers distribution functions of the form $1-(1-G^\alpha)^\beta$, where $G$ is some given distribution function, and $\alpha,\beta>0$. 
Thus, this framework includes the PHR model based on exponentiated distributions, while requiring weaker assumptions on the generators than those in \cite{PanjaKunduPradhan2021} and \cite{Datta_Gupta2024}. It is worth noting that the stochastic dominance result in \cite{Datta_Gupta2024} is derived through a combination of hazard rate and reversed hazard rate orderings.

We now apply the general results proved in Section~\ref{sec:general} to obtain stochastic dominance between different parallel or series systems with dependent heterogeneous components whose lifetimes are described through the operator $T_e$ and have joint distribution described by a convenient Archimedean copula. Note that, for each fixed $F\in\PR$, $\tfrac{\partial T_e}{\partial\beta}=-(1-F)^\beta\log(1-F)=-(1-T_e)\log(1-F)>0$, hence $T_e$ is increasing with respect to $\beta$.

\begin{proposition}
{With marginal distributions based on the functional $T_e$, assume the parameters $\balpha$ and $\bbeta$ satisfy $\alpha_i\leq\beta_i$, for $i=1,\ldots,n$, and the copula generators are such that $\phi_2\circ\psi_1$ is super-additive}. Then {$X_{n:n}(\balpha)\geqst Y_{n:n}(\bbeta)$}.
\end{proposition}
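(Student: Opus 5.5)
This is a direct application of assertion 1 of Theorem~\ref{thm:main0} with $T=T_e$. That assertion needs three hypotheses: coordinatewise ordered parameters, $T$ increasing in its first argument, and $\phi_2\circ\psi_1$ super-additive. The first and third are assumed in the statement, so the only thing left to check is that $T_e$ is increasing in $\beta$, in the pointwise sense used in Theorem~\ref{thm:main0}.

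To verify this, fix $F\in\PR$ and $x\in\mathbb{R}$ and write $p=F(x)\in[0,1]$. We must show that $\beta\mapsto 1-(1-p)^\beta$ is nondecreasing on $(0,\infty)$. The edge cases are immediate:
\begin{itemize}
\item if $p=0$, the map is identically $0$;
\item if $p=1$, it is identically $1$ for $\beta>0$.
\end{itemize}
For $0<p<1$ we have $0<1-p<1$, so $(1-p)^\beta$ is strictly decreasing in $\beta$. Equivalently, the derivative computed just before the statement,
\[
\tfrac{\partial T_e}{\partial\beta}=-(1-p)^\beta\log(1-p),
\]
is positive. Hence $\alpha\le\beta$ implies $T_e(\alpha,F)(x)\le T_e(\beta,F)(x)$ for every $x$. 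This is precisely the monotonicity hypothesis of assertion 1 of Theorem~\ref{thm:main0}.

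Then invoke the theorem. Lemma~\ref{cop} gives $C_{\psi_1}\le C_{\psi_2}$, and $C_{\psi_2}$ is increasing in each coordinate, so
\[
F^{(\balpha)}_{n:n}=C_{\psi_1}\bigl(T_e(\alpha_i,F)\bigr)_i\le C_{\psi_2}\bigl(T_e(\alpha_i,F)\bigr)_i\le C_{\psi_2}\bigl(T_e(\beta_i,F)\bigr)_i=F^{(\bbeta)}_{n:n}.
\]
This yields $X_{n:n}(\balpha)\geqst Y_{n:n}(\bbeta)$.

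I do not expect a real obstacle. The only care needed is to state the monotonicity pointwise in $x$, including the boundary values $F(x)\in\{0,1\}$ where the logarithm is undefined, rather than relying only on the derivative formula.
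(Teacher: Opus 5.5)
Your proposal is correct and follows the paper's own proof, which simply invokes assertion~1 of Theorem~\ref{thm:main0} after observing that $T_e$ is increasing in $\beta$. Checking the monotonicity pointwise, including the boundary values $F(x)\in\{0,1\}$ where the logarithm in the derivative is undefined, is a small extra bit of rigor but does not change the argument.
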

\begin{proof}
This is an immediate consequence of assertion 1.\ in Theorem~\ref{thm:main0}.
\end{proof}

If the parameter vectors satisfy the more relaxed majorization relation, we need to be more specific about the copulas describing the joint distributions.

\begin{proposition}
\label{prop:exp-parallel}
{With marginal distributions based on the functional $T_e$ and copula generators are such that $\phi_2\circ\psi_1$ is super-additive and $\psi_2$ is $d$-monotone with $d\geq 3$, assume the parameters satisfy $\balpha\overset{m}\succeq\bbeta$}. 
If any of the following conditions is fulfilled:
    \begin{enumerate}
    \item
    (Clayton copula) $\psi_2$ is the Clayton generator with $\gamma_2\geq -1$,
    \item
    (Gumbel copula) $\psi_2$ is the Gumbel generator with $\gamma_2\geq1$,
    \item
    (AMH copula) $\psi_2$ is the AMH generator with $\gamma_2\in(-1,1)$,
    \end{enumerate}
then {$X_{n:n}(\balpha)\geqst Y_{n:n}(\bbeta)$}.
\end{proposition}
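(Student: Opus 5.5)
We apply Theorem~\ref{thm:main1} directly. Super-additivity of $\phi_2\circ\psi_1$, $d$-monotonicity of $\psi_2$ and $\balpha\overset{m}\succeq\bbeta$ are hypotheses, and $T_e$ is clearly differentiable in $\beta$. So the only thing to check is that the function $V$ in (\ref{eq:V-inc}) is increasing in $\beta>0$, for each fixed $x$ and for each of the three generators. The factor $\psi_2^\prime\circ\phi_2$ is read off Table~\ref{tab:Arch-fs}.

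Fix $x$ with $u=F(x)\in(0,1)$; other values of $x$ are trivial. Put $s=(1-u)^\beta\in(0,1)$, so that $T_e(\beta,F)(x)=1-s$ and $\frac{\partial T_e}{\partial\beta}=-s\log(1-u)$. The key observation is that $s$ is strictly decreasing in $\beta$, while $\log(1-u)<0$ does not depend on $\beta$. In each case $V(\beta)=c\,\log(1-u)\,g(s)$ for a positive constant $c$ and an explicit function $g$ on $(0,1)$. Hence $V$ is increasing in $\beta$ as soon as $g$ is increasing in $s$: a decreasing $s$ makes $g(s)$ decrease, and multiplying by the negative constant $c\log(1-u)$ turns this into an increase.

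The case computations are as follows.
\begin{enumerate}
\item Clayton, where $\psi_2^\prime\circ\phi_2(t)=-t^{\gamma_2+1}$. Here $V=\log(1-u)\,s(1-s)^{-(\gamma_2+1)}$. Since $\gamma_2+1\ge 0$, both $s$ and $(1-s)^{-(\gamma_2+1)}$ are nonnegative and nondecreasing in $s$, so $g(s)=s(1-s)^{-(\gamma_2+1)}$ is increasing.
\item Gumbel, where $\psi_2^\prime\circ\phi_2(t)=-\frac{t}{\gamma_2}(-\log t)^{1-\gamma_2}$. Here $V=\gamma_2\log(1-u)\,\frac{s}{1-s}\,(-\log(1-s))^{\gamma_2-1}$. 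The factor $\frac{s}{1-s}$ is increasing, and $(-\log(1-s))^{\gamma_2-1}$ is positive and nondecreasing because $\gamma_2\ge1$.
\item AMH, where $\psi_2^\prime\circ\phi_2(t)=-\frac{t(1-\gamma_2+\gamma_2t)}{1-\gamma_2}$. Substituting $t=1-s$ gives $1-\gamma_2+\gamma_2t=1-\gamma_2 s$, so $V=(1-\gamma_2)\log(1-u)\,\frac{s}{(1-s)(1-\gamma_2 s)}$. Note that $1-\gamma_2 s>0$ on $(0,1)$ for $\gamma_2\in(-1,1)$.
\end{enumerate}

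The AMH case is the only step that needs care, because for $\gamma_2<0$ the factor $1-\gamma_2 s$ is increasing in $s$ and works against monotonicity of $g$. I would handle it with a logarithmic derivative:
$$
\frac{d}{ds}\log g(s)=\frac1s+\frac{1}{1-s}+\frac{\gamma_2}{1-\gamma_2 s}=\frac{1}{s(1-\gamma_2 s)}+\frac1{1-s}>0 .
$$
This is valid for every $\gamma_2\in(-1,1)$, so $g$ is increasing in this case too.

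With $V$ increasing in all three cases, Theorem~\ref{thm:main1} yields $X_{n:n}(\balpha)\geqst Y_{n:n}(\bbeta)$.
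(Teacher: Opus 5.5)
Your proof is correct and follows the paper's route: you apply Theorem~\ref{thm:main1} and reduce the monotonicity of $V$ in $\beta$ to the monotonicity of a one-variable function, checked case by case from Table~\ref{tab:Arch-fs}. Indeed, the paper's $S_e(x)=\frac{1-x}{\psi_2^\prime\circ\phi_2(x)}$ equals $-c\,g(1-x)$ in your notation. The differences are only cosmetic: you work in the survival variable $s=1-T_e$ and argue through products of positive monotone factors and a logarithmic derivative, whereas the paper differentiates $S_e$ directly; your AMH computation reduces to the same sign condition $1-\gamma_2 s^2>0$ that the paper obtains.
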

\begin{proof}
Replacing the expression for the derivative of $T_e$ in (\ref{eq:V-inc}), means that we need to prove the increasingness with respect to $\beta>0$ of
$$
V(\beta)=\frac{-(1-T_e)\log(1-F)}{\psi_2^\prime\circ\phi_2(T_e)}.
$$
Given the monotonicity of $T_e$ with respect to $\beta$, the increasingness of $V(\beta)$ follows if we prove that $S_e(x)=\tfrac{1-x}{\psi_2^\prime\circ\phi_2(x)}$ is increasing for $x\in(0,1]$.
    \begin{enumerate}
    \item
    (Clayton copula) Replacing the expression for $\psi_2^\prime\circ\phi_2(x)$ from Table~\ref{tab:Arch-fs}, we find that $S_e(x)=x^{-\gamma_2}-x^{-(\gamma_2+1)}$, which is easily seen to be increasing whenever $\gamma_2\geq -1$ and $x\in(0,1]$.
    
    
    \item
    (Gumbel copula) We now have $S_e(x)=\gamma_2\left(1-\frac1x\right)\left(-\log x\right)^{\gamma_2-1}$. Differentiating shows that $S_e^\prime$ has the same sign as $-\log x-(\gamma_2-1)(x-1)$ that, for $\gamma_2\geq 1$ and $x\in(0,1]$, is nonnegative, thus $S_e$ is increasing.
    
    \item
    (AMH copula) In this case, we have $S_e(x)=\left(1-\frac1x\right)\tfrac{1-\gamma_2}{1-\gamma_2(1-x)}$. It easy verify that $S_e^\prime$ has the same sign as $1-\gamma_2+2\gamma_2 x-\gamma_2 x^2$. This quadratic has constant sign when $x\in(0,1]$ and $\gamma_2\in(-1,1)$, the range for the variation of the parameter, and the sign of this polynomial is positive. Hence, $S_e$ is indeed increasing when $x\in(0,1]$.
    \end{enumerate}
\end{proof}


We now describe conditions for stochastic dominance between series systems, {whose lifetime is given by $X_{1:n}(\balpha)=\min\{X_1,\ldots,X_n\}$ and $Y_{1:n}(\bbeta)=\min\{Y_1,\ldots,Y_n\}$, respectively}.
The following result is an immediate consequence of assertion 2.\ in Theorem~\ref{thm:main0min}.
\begin{proposition}
{With marginal distributions based on the functional $T_e$, copula generators such that $\phi_1\circ\psi_2$ is super-additive and parameters $\balpha$ and $\bbeta$ satisfying $\alpha_i\leq\beta_i$, for $i=1,\ldots,n$, we have that $X_{1:n}(\balpha)\geqst Y_{1:n}(\bbeta)$.}
\end{proposition}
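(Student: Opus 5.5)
The statement is a direct specialisation of assertion 2.\ in Theorem~\ref{thm:main0min}, so the plan is to check that its two hypotheses hold in the present setting and then invoke it.

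\emph{Monotonicity of $T_e$.} Fix $F\in\PR$ and $x\in\mathbb{R}$. We need $T_e(\cdot,F)(x)$ to be increasing in its first argument. For $F(x)\in(0,1)$ this follows from the derivative computed just before Proposition~\ref{prop:exp-parallel}:
$$
\frac{\partial T_e}{\partial\beta}=-(1-F)^\beta\log(1-F)\geq 0 .
$$
Equivalently, $\beta\mapsto(1-F(x))^\beta$ is decreasing because $1-F(x)\in(0,1)$. At the endpoints the map is constant in $\beta$. If $F(x)=0$, then $T_e=0$ for every $\beta>0$. If $F(x)=1$, then $T_e=1$. So for $\alpha\leq\beta$ we get $T_e(\alpha,F)(x)\leq T_e(\beta,F)(x)$ for every $x$. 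This is exactly the monotonicity condition of assertion 2.

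\emph{Remaining hypotheses and conclusion.} The super-additivity of $\phi_1\circ\psi_2$ is assumed in the statement. The coordinatewise relation $\alpha_i\leq\beta_i$, $i=1,\ldots,n$, is also assumed. Assertion 2.\ of Theorem~\ref{thm:main0min} then gives $X_{1:n}(\balpha)\geqst Y_{1:n}(\bbeta)$.

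To make the step transparent, the chain of inequalities runs through the survival copula representation (\ref{eq:minX}):
$$
\widetilde h(\balpha;\psi_1)=C_{\psi_1}\bigl(\Tbar_e(\alpha_1,F),\ldots,\Tbar_e(\alpha_n,F)\bigr)\geq C_{\psi_1}\bigl(\Tbar_e(\beta_1,F),\ldots,\Tbar_e(\beta_n,F)\bigr)\geq C_{\psi_2}\bigl(\Tbar_e(\beta_1,F),\ldots,\Tbar_e(\beta_n,F)\bigr)=\widetilde h(\bbeta;\psi_2).
$$
The first inequality holds because $\Tbar_e=\Fbar^\beta$ is decreasing in $\beta$ and the copula is increasing in each coordinate. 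The second inequality is Lemma~\ref{cop} applied with the roles of the generators swapped: super-additivity of $\phi_1\circ\psi_2$ gives $C_{\psi_2}\leq C_{\psi_1}$. The outer equalities use that the series-system survival function is given by the Archimedean copula evaluated at the marginal survival functions, as in (\ref{eq:minX}).

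There is no substantive obstacle here. The only points needing care are the degenerate values $F(x)\in\{0,1\}$ and making sure Lemma~\ref{cop} is applied in the correct direction.
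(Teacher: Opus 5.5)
Your proposal is correct and matches the paper's argument: the paper obtains this statement directly from assertion 2 of Theorem~\ref{thm:main0min}, using that $T_e$ is increasing in $\beta$. Your explicit chain of inequalities through the survival representation (\ref{eq:minX}) and Lemma~\ref{cop} with the generators swapped is exactly the reasoning behind that theorem.
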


\begin{proposition}
\label{prop:seriesPHR}
{With marginal distributions based on the functional $T_e$ and copula generators are such that $\phi_2\circ\psi_1$ is super-additive and $\psi_2$ is $d$-monotone with $d\geq 3$, assume the parameters satisfy $\bbeta\overset{m}\succeq\balpha$}. If any of the following conditions is fulfilled:
    \begin{enumerate}
    \item
    (Clayton copula) $\psi_1$ is the Clayton generator with $\gamma_1>0$,
    \item
    (Frank copula) $\psi_1$ is the Frank generator with $\gamma_1\in\mathbb{R}$,
    \item
    (Gumbel copula) $\psi_1$ is the Gumbel generator with $\gamma_1\geq1$,
    \item
    (AMH copula) $\psi_1$ is the AMH generator with $\gamma_1>0$,
    \end{enumerate}
then {$X_{1:n}(\balpha)\leqst Y_{1:n}(\bbeta)$}.
\end{proposition}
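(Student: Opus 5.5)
The plan is to obtain the statement as a direct application of Theorem~\ref{thm:main1min}, or equivalently Corollary~\ref{cor:main1min}, to the functional $T_e$. The hypotheses on the generators ($\phi_2\circ\psi_1$ super-additive, $d$-monotonicity) and the majorization $\bbeta\overset{m}\succeq\balpha$ are assumed in the statement. So the only thing to check, for each of the four families, is the monotonicity of $\overline{V}$ in (\ref{eq:V-inc-min}), computed with the generator $\psi_1$.

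First I compute the ingredients. Here $\Tbar_e(\beta,F)=(1-F)^\beta$, so
\[
\frac{\partial\Tbar_e}{\partial\beta}=\Tbar_e\log(1-F).
\]
Since $\psi_1^\prime\circ\phi_1(y)=-y/K_1(\phi_1(y))$ by the definition of $K_1$, this gives
\[
\overline{V}(\beta)=-\log(1-F)\,K_1\bigl(\phi_1(\Tbar_e(\beta,F))\bigr).
\]
This is the form in (\ref{eq:K1-dec}). The factor $-\log(1-F)$ is positive and does not depend on $\beta$. Moreover $\beta\mapsto\Tbar_e$ is decreasing and $\phi_1$ is decreasing, so $\beta\mapsto\phi_1(\Tbar_e(\beta,F))$ is increasing.

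Hence the monotonicity of $\overline{V}$ reduces to the monotonicity of the single-variable function $\overline{S}_e(y)=K_1(\phi_1(y))=-y/\psi_1^\prime\circ\phi_1(y)$ on $(0,1]$. I will do this in parallel with the function $S_e$ in the proof of Proposition~\ref{prop:exp-parallel}, reading $\psi_1^\prime\circ\phi_1$ or $K_1$ from Table~\ref{tab:Arch-fs} case by case:
\begin{itemize}
\item Clayton: $\overline{S}_e(y)=y^{-\gamma_1}$.
\item Gumbel: $\overline{S}_e(y)=\gamma_1(-\log y)^{(\gamma_1-1)}$.
\item AMH: $\overline{S}_e(y)=1-\gamma_1\frac{y}{1-\gamma_1(1-y)}$.
\item Frank: compose the tabulated $K$ with $\phi_1$. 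This gives an expression of the form $c(y)\log(\cdot)$ in $e^{-\gamma_1 y}$. Its derivative in $y$ can be sign-checked through an elementary inequality such as $\log(1+z)\le z$, applied with $z$ related to $e^{-\gamma_1 y}-1$. This has to hold for both signs of $\gamma_1$.
\end{itemize}
In each case I differentiate in $y$ and determine the sign using the stated parameter ranges: $\gamma_1>0$ for Clayton and AMH, $\gamma_1\ge1$ for Gumbel, $\gamma_1$ real for Frank.

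The main obstacle is the sign bookkeeping, and I expect it to decide the argument. The direction required of $\overline{V}$ in Theorem~\ref{thm:main1min} comes from the Schur-convexity step, where the sign of $\psi_1^\prime$ enters. After the two decreasing compositions above, that requirement translates into a definite direction of monotonicity for $y\mapsto K_1(\phi_1(y))$. I will fix this direction carefully, tracking the signs of $\log(1-F)$, of $\partial\Tbar_e/\partial\beta$ and of $\psi_1^\prime$, before doing the family computations.

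For Clayton, AMH and Gumbel the resulting functions are monotone in $y$ exactly under the stated parameter restrictions. I therefore expect these three cases to be routine once the direction is settled. The Frank case is the genuinely computational one: its $K$ is not a simple power or rational function, and the sign of its derivative needs a short convexity argument valid for every $\gamma_1\neq0$. If a direct check is messy, I would first try writing $\overline{S}_e$ in terms of $w=e^{-\gamma_1 y}$ and reducing it to the monotonicity of $w\mapsto w\log w/(w-1)$, which is elementary.
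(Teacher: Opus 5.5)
\paragraph{Verdict.} Your proposal has a genuine gap, and it is the one you deferred. The direction of monotonicity comes out the wrong way. Moreover, the statement itself is false as written, so no choice of direction-tracking can rescue the argument.

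\paragraph{Where the argument breaks.} You follow the paper's route: Theorem~\ref{thm:main1min}, reduction to a one-variable function, then a check for each family. Your identity $\overline V(\beta)=-\log(1-F)\,K_1\bigl(\phi_1(\Tbar_e(\beta,F))\bigr)$ and your four formulas are correct. Theorem~\ref{thm:main1min} requires $\overline V$ to be decreasing in $\beta$. Since $-\log(1-F)>0$ and $\Tbar_e$ decreases in $\beta$, this means $y\mapsto K_1(\phi_1(y))$ must be \emph{increasing} on $(0,1]$. Equivalently, $K_1$ must be decreasing, i.e.\ $\psi_1$ log-concave. Your functions go the other way:
\begin{itemize}
\item Clayton: $y^{-\gamma_1}$ with $\gamma_1>0$;
\item Gumbel: $\gamma_1(-\log y)^{\gamma_1-1}$ with $\gamma_1\ge 1$;
\item AMH: $1-\gamma_1y/(1-\gamma_1(1-y))$ with $\gamma_1>0$;
\item Frank: $\gamma_1y/(e^{\gamma_1y}-1)$ with $\gamma_1>0$.
\end{itemize}
All of these are strictly decreasing in $y$, apart from the independence case $\gamma_1=1$ of Gumbel. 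This is just log-convexity of these generators. Hence $\overline V$ is increasing; for Clayton, $\overline V(\beta)=a\,e^{a\gamma_1\beta}$ with $a=-\log(1-F)>0$. The computation in the proof of Theorem~\ref{thm:main1min} then yields Schur-\emph{concavity} of $\widetilde h(\cdot;\psi_1)$, the opposite of what you need.

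The paper's proof contains the same sign slip. It claims it suffices that $\overline S_e(x)=\frac{1-x}{\psi_1'\circ\phi_1(1-x)}$ be decreasing. But $\overline V(\beta)=\log(1-F)\,\overline S_e(T_e(\beta,F))$, with $\log(1-F)<0$ and $T_e$ increasing in $\beta$. So a decreasing $\overline V$ would require $\overline S_e$ to be \emph{increasing}.

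\paragraph{Counterexample.} Take the following data:
\begin{itemize}
\item $n=2$ and $\psi_1=\psi_2$ the Clayton generator with $\gamma=1$. Then $\phi_2\circ\psi_1$ is the identity, hence super-additive, and $\psi_2$ is completely monotone.
\item $F(x)=1-e^{-x}$, $\balpha=(1,1)$ and $\bbeta=(1/2,3/2)$, so $\bbeta\overset{m}\succeq\balpha$.
\end{itemize}
By (\ref{eq:minX}),
\[
\Fbar_{X_{1:2}}(x)=\frac{1}{2e^{x}-1},\qquad \Fbar_{Y_{1:2}}(x)=\frac{1}{e^{x/2}+e^{3x/2}-1},\qquad x\ge 0 .
\]
By AM--GM, $e^{x/2}+e^{3x/2}>2e^{x}$ for $x>0$. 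Hence $\Fbar_{Y_{1:2}}<\Fbar_{X_{1:2}}$ on $(0,\infty)$, and $X_{1:2}(\balpha)\leqst Y_{1:2}(\bbeta)$ fails.

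\paragraph{What your computation does prove.} It establishes the corrected statement with the majorization reversed. Assume $\balpha\overset{m}\succeq\bbeta$. Schur-concavity of $\widetilde h(\cdot;\psi_1)$ together with Lemma~\ref{cop} gives $\widetilde h(\balpha;\psi_1)\le\widetilde h(\bbeta;\psi_1)\le\widetilde h(\bbeta;\psi_2)$. That is, $X_{1:n}(\balpha)\leqst Y_{1:n}(\bbeta)$ whenever $\psi_1$ is log-convex.
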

\begin{proof}
We have $\Tbar_e=(1-F)^\beta$, so $\tfrac{\partial \Tbar_e}{\partial\beta}=(1-F)^\beta\log(1-F)=\Tbar_e\log(1-F)<0$, implying that $\Tbar_e$ is decreasing with respect to $\beta$. Hence, according to Theorem~\ref{thm:main1min}, we need to prove that 
$$
\overline{V}(\beta)=\frac{\Tbar_e\log(1-F)}{\psi_1^\prime\circ\phi_1(\Tbar_e)}
$$
is decreasing. Now, this follows by proving that $\overline{S}_e=\tfrac{1-x}{\psi_1^\prime\circ\phi_1(1-x)}$ is decreasing for $x\in[0,1]$.
    \begin{enumerate}
    \item
    (Clayton copula) As $\overline{S}_e(x)=-(1-x)^{-\gamma_1}$, it is clear that $\overline{S}_e$ is decreasing when $\gamma_1>0$ and $x\in[0,1]$.
    
    \item
    (Frank copula) We now have $\overline{S}_e(x)=\tfrac{\gamma_1 (1-x)}{1-e^{\gamma_1(1-x)}}$, therefore $\overline{S}_e^\prime$ has the opposite sign as $e^{\gamma_1(1-x)}(1-\gamma_1(1-x))-1$ that can be seen to be nonnegative when $x\in[0,1]$, hence $\overline{S}_e$ decreasing.
    
    \item
    (Gumbel copula) For this copula we have $\overline{S}_e(x)=-\gamma_1(-\log(1-x))^{\gamma_1-1}$ so, 
    $$
    \overline{S}_e^\prime(x)=-\gamma_1(\gamma_1-1)(-\log(1-x))^{\gamma_1-2}\frac{1}{1-x}<0
    $$
    when $x\in[0,1]$ and $\gamma_1>1$. 
            
    \item
    (AMH copula) In this case $\overline{S}_e(x)=\tfrac{\gamma_1-1}{1-\gamma_1x}$, whose derivative has the same sign as $\gamma_1(\gamma_1-1)$, hence is negative for $\gamma_1>0$ given the range of variability of the parameter for the AMH copula.
    \end{enumerate}
\end{proof}

Stochastic dominance between $(n-k)$-out-of-$n$ systems with joint distributions based on the same Archimedean copula is an immediate consequence of Theorem~\ref{thm:heterogeneous-coordinatewise-comparison}. Indeed, according to latter result, all we need to verify is the monotonicity, with respect to the first parameter of $T_e$, that we already mentioned to be increasing. Hence the following statement holds.
\begin{proposition}
Let $1\leq k\leq n-1$, $\balpha=(\alpha_1,\ldots,\alpha_n)$ and $\bbeta=(\beta_1,\ldots,\beta_n)$, where $\alpha_i\leq\beta_i$, for every $i=1,\ldots,n$. With marginal distributions based on the functional $T_e$, if the copula generator $\psi_1$ is $d$-monotone with $d\geq k+3$ then $X_{n-k:n}(\balpha)\geqst X_{n-k:n}(\bbeta)$.
\end{proposition}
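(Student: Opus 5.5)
The plan is to obtain the statement as a direct specialisation of Theorem~\ref{thm:heterogeneous-coordinatewise-comparison}. The work consists of checking that the hypotheses of that theorem, which are inherited from Corollary~\ref{heterogeneous-sign} and hence from Propositions~\ref{prop:heterogeneous-n-k}--\ref{heterogeneous-difference}, are met by the functional $T_e$ defined in (\ref{eq:Te}). Afterwards we read off the increasing case.

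\textbf{Step 1: differentiability of $T_e$.} For every fixed $x$ with $0<F(x)<1$, the map $\beta\mapsto T_e(\beta,F)(x)=1-(1-F(x))^\beta$ is differentiable on $(0,\infty)$. Its derivative is
$$\frac{\partial T_e}{\partial\beta}=-(1-F)^\beta\log(1-F)=-(1-T_e)\log(1-F),$$
which is strictly positive because $\log(1-F(x))<0$. Moreover $0<T_e(\alpha_i,F)(x)<1$ for all $i$ exactly when $0<F(x)<1$. So the set of $x$ considered in Proposition~\ref{prop:heterogeneous-n-k} is the whole support interior.

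Outside this set the comparison is trivial:
\begin{itemize}
\item if $F(x)=0$, every marginal value is $0$, so both distribution functions of the order statistics vanish;
\item if $F(x)=1$, every marginal value is $1$, so both equal $1$.
\end{itemize}

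\textbf{Step 2: the generator.} The assumed $d$-monotonicity of $\psi_1$ with $d\ge k+3$ supplies the sign condition on $\psi_1^{(k+1)}$ used in Corollary~\ref{heterogeneous-sign}. It also covers the requirement $d\ge3$ of Proposition~\ref{heterogeneous-derivative}. The differentiability of $\phi_1$ on $(0,1)$ follows from $\psi_1$ being strictly decreasing and differentiable with $\psi_1'<0$ there. This point should be remarked on briefly, since it is the only hypothesis not literally contained in the statement. I expect it to be the mild obstacle: one may need to note that a $d$-monotone generator with $d\ge3$ has $\psi_1'<0$ on the interior of its support, and to restrict to the set where $\psi_1>0$.

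\textbf{Step 3: conclusion.} Apply assertion 1 of the ``consequently'' part of Theorem~\ref{thm:heterogeneous-coordinatewise-comparison}, using the common copula generator $\psi_1$ for both parameter vectors. Since $T_e$ is increasing in its first argument and $\alpha_i\le\beta_i$ for all $i$, we get $F^{(\balpha)}_{n-k:n}(x)\le F^{(\bbeta)}_{n-k:n}(x)$ for every $x$. This is exactly $X_{n-k:n}(\balpha)\geqst X_{n-k:n}(\bbeta)$.
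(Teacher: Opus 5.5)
Your proposal is correct and follows the paper's route: the paper also obtains this as an immediate specialisation of Theorem~\ref{thm:heterogeneous-coordinatewise-comparison}, where the only thing to check is that $T_e$ is increasing in its first argument, via $\partial T_e/\partial\beta=-(1-F)^\beta\log(1-F)>0$. Your extra checks are sound and just make explicit what the paper leaves implicit. These are the trivial cases $F(x)\in\{0,1\}$ and the differentiability of $\phi_1$; the latter holds because for $d\ge 3$ the function $-\psi_1'$ is nonnegative and nonincreasing, so a zero of $\psi_1'$ where $\psi_1>0$ would make $\psi_1$ constant and positive on a half-line, contradicting $\psi_1(t)\to 0$.
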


\subsection{The PRHR model}
A slight modification of the previous model considers
\begin{equation}
\label{eq:Te1}
T_c(\beta,F)=F^\beta,
\end{equation}
where $\beta>0$ and $F\in\PR$. This transformation defines the PRHR models that includes every exponentiated distribution function. {Note that we now have $\tfrac{\partial T_c}{\partial\beta}=T_c\log F<0$, hence $T_c$ is decreasing with respect to $\beta$.}
The following result is immediate taking into account assertion 2.\ in Theorem~\ref{thm:main0}.
\begin{proposition}
{With marginal distributions based on the functional $T_c$, copula generators such that $\phi_1\circ\psi_2$ is super-additive and parameters $\balpha$ and $\bbeta$ satisfying $\alpha_i\leq\beta_i$, for $i=1,\ldots,n$, we have that $X_{n:n}(\balpha)\leqst Y_{n:n}(\bbeta)$.}
\end{proposition}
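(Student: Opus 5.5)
The statement is a direct specialisation of assertion 2 of Theorem~\ref{thm:main0}, so the plan is to check its two hypotheses for the functional $T_c(\beta,F)=F^\beta$ and then invoke it. The super-additivity of $\phi_1\circ\psi_2$ is assumed outright, and the ordering $\alpha_i\leq\beta_i$, $i=1,\ldots,n$, is also part of the statement. What remains is the monotonicity requirement: $\alpha\leq\beta$ must imply $T_c(\alpha,F)(x)\geq T_c(\beta,F)(x)$ for every $x\in\mathbb{R}$.

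For monotonicity, fix $x$ and set $u=F(x)\in[0,1]$. If $u\in(0,1)$, then $\beta\mapsto u^\beta=e^{\beta\log u}$ is strictly decreasing on $(0,\infty)$, because $\log u<0$. This is just the sign computation $\tfrac{\partial T_c}{\partial\beta}=T_c\log F<0$ noted right after (\ref{eq:Te1}). The endpoint cases $u=0$ and $u=1$ give the constants $0$ and $1$, so the inequality $u^\alpha\geq u^\beta$ holds trivially there. Hence $T_c(\alpha,F)\geq T_c(\beta,F)$ pointwise whenever $\alpha\leq\beta$.

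With both hypotheses in place, I would reproduce the chain from the proof of Theorem~\ref{thm:main0}, with the roles of the copulas reversed. First apply Lemma~\ref{cop} to the pair $(\psi_2,\psi_1)$: super-additivity of $\phi_1\circ\psi_2$ gives $C_{\psi_2}\leq C_{\psi_1}$. Since $C_{\psi_2}$ is coordinatewise increasing and $T_c(\beta_i,F)\leq T_c(\alpha_i,F)$, we get
$$F^{(\bbeta)}_{Y_{n:n}}=C_{\psi_2}(T_c(\beta_1,F),\ldots,T_c(\beta_n,F))\leq C_{\psi_2}(T_c(\alpha_1,F),\ldots,T_c(\alpha_n,F))\leq C_{\psi_1}(T_c(\alpha_1,F),\ldots,T_c(\alpha_n,F))=F^{(\balpha)}_{X_{n:n}}.$$
This is exactly $X_{n:n}(\balpha)\leqst Y_{n:n}(\bbeta)$. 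There is no genuine obstacle here; the only point requiring care is the boundary values $F(x)\in\{0,1\}$ in the monotonicity check, which are handled directly as above.
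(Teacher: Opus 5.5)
Your proposal is correct and takes the same route as the paper, which simply observes that $T_c$ is decreasing in $\beta$ (since $\partial T_c/\partial\beta=T_c\log F<0$) and invokes assertion 2 of Theorem~\ref{thm:main0}. Your explicit chain, which applies Lemma~\ref{cop} to the pair $(\psi_2,\psi_1)$ to get $C_{\psi_2}\leq C_{\psi_1}$ and then uses the coordinatewise monotonicity of $C_{\psi_2}$, just writes out the ``analogous'' proof of that assertion, with the inequalities correctly reversed.
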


\begin{proposition}
\label{prop:exp-parallel-c}
{With marginal distributions based on the functional $T_c$ and copula generators are such that $\phi_2\circ\psi_1$ is super-additive and $\psi_2$ is $d$-monotone with $d\geq 3$, assume the parameters satisfy $\balpha\overset{m}\succeq\bbeta$}. Assume any of the following conditions is fulfilled:
    \begin{enumerate}
    \item
    (Clayton copula) $\psi_2$ is the Clayton generator with $\gamma_2\geq 0$,
    \item
    (AMH copula) $\psi_2$ is the AMH generator with $\gamma_2\in[0,1]$.
    \end{enumerate}
Then, {$X_{n:n}(\balpha)\geqst Y_{n:n}(\bbeta)$}.
\end{proposition}
\begin{proof}
Replacing the derivative of $T_c$ in (\ref{eq:V-inc}), we need to prove the increasingness with respect to $\beta>0$ of
$$
V(\beta)=\frac{T_c\log F}{\psi_2^\prime\circ\phi_2(T_c)},
$$
which follows if we prove that $S_c(x)=\tfrac{x}{\psi_2^\prime\circ\phi_2(x)}$ is increasing for $x\in[0,1]$, taking into account that $\log F<0$.
    \begin{enumerate}
    \item
    (Clayton copula) We find that $S_c(x)=-x^{-\gamma_2}$, which is easily seen to be increasing whenever $\gamma_2\geq 0$ and $x\in[0,1]$.
    
%
    
    \item
    (AMH copula) In this case we have $S_c(x)=\tfrac{-(1-\gamma_2)}{1-\gamma_2+\gamma_2x}$ that is increasing when $\gamma_2\geq 0$ and $x\in[0,1]$. 
    \end{enumerate}
\end{proof}


As an immediate consequence of assertion 1.\ in Theorem~\ref{thm:main0min}, we have the following result.
\begin{proposition}
{With marginal distributions based on the functional $T_c$, copula generators such that $\phi_2\circ\psi_1$ is super-additive and parameters $\balpha$ and $\bbeta$ satisfying $\alpha_i\leq\beta_i$, for $i=1,\ldots,n$, we have that $X_{1:n}(\balpha)\leqst Y_{1:n}(\bbeta)$.}
\end{proposition}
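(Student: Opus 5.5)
The plan is to apply assertion~1 of Theorem~\ref{thm:main0min} directly. The work consists of checking its two hypotheses for the functional $T_c$: the monotonicity direction, and the super-additivity of $\phi_2\circ\psi_1$.

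\emph{Monotonicity.} I would first record that $T_c(\beta,F)=F^\beta$ is decreasing in $\beta$. For each fixed $x$ with $0<F(x)<1$,
\[
\frac{\partial T_c}{\partial\beta}=F^\beta\log F<0 .
\]
At points where $F(x)\in\{0,1\}$, the value $F^\beta$ is constant in $\beta>0$. Hence $\alpha\le\beta$ implies $T_c(\alpha,F)(x)\ge T_c(\beta,F)(x)$ for every $x$, which is exactly the hypothesis of assertion~1 in Theorem~\ref{thm:main0min}.

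\emph{Copula condition.} The remaining hypotheses are the super-additivity of $\phi_2\circ\psi_1$, assumed in the statement, and the coordinatewise ordering $\alpha_i\le\beta_i$, also assumed. So Theorem~\ref{thm:main0min}(1) yields $X_{1:n}(\balpha)\leqst Y_{1:n}(\bbeta)$.

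\emph{Unfolding the theorem.} For completeness I would sketch the chain that proves the theorem in this setting. The survival function of the series system is the copula expression~(\ref{eq:minX}), applied to the tails $\Tbar_c(\alpha_i,F)=1-F^{\alpha_i}$. Lemma~\ref{cop} gives $C_{\psi_1}\le C_{\psi_2}$ pointwise, and this compares the survival function of $X_{1:n}(\balpha)$, evaluated at the tails $\Tbar_c(\alpha_i,F)$, with the $\psi_2$-version. Since $\Tbar_c$ is increasing in the parameter and $C_{\psi_2}$ is increasing in each coordinate, replacing $\alpha_i$ by $\beta_i$ increases the $\psi_2$-expression. The result is
\[
\Fbar_{X_{1:n}(\balpha)}\le \widetilde h(\balpha;\psi_2)\le \widetilde h(\bbeta;\psi_2)=\Fbar_{Y_{1:n}(\bbeta)}.
\]

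\emph{Main obstacle.} The only delicate point is the one inherited from Theorem~\ref{thm:main0min}: the tails of the series system are governed by a survival copula. Using Lemma~\ref{cop} on the tails requires reading the Archimedean structure of~(\ref{eq:minX}) as the survival copula. I would follow the paper's convention there and not revisit it. Apart from that, the proof is a one-line invocation.
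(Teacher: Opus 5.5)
Your proposal is correct and follows the paper's own route: the paper also notes that $T_c$ is decreasing in its parameter and invokes assertion~1 of Theorem~\ref{thm:main0min}. The survival-copula chain you unfold is precisely the argument behind that theorem, including the convention of reading the Archimedean structure in (\ref{eq:minX}) as the survival copula.
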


For the copulas considered, the series systems based on distributions issued from the PRHR model are not comparable with respect to stochastic domination when the parameter vectors only satisfy a majorization relation.

As for the PHR model we may prove the stochastic dominance between $(n-k)$-out-of-$n$ systems when the joint distribution is described by the same Archimedean copula, as an immediate consequence of Theorem~\ref{thm:heterogeneous-coordinatewise-comparison}. Taking into account that $T_c$ is decreasing with respect to its first argument, the following statement is immediate.
\begin{proposition}
Let $1\leq k\leq n-1$, $\balpha=(\alpha_1,\ldots,\alpha_n)$ and $\bbeta=(\beta_1,\ldots,\beta_n)$, where $\alpha_i\leq\beta_i$, for every $i=1,\ldots,n$. With marginal distributions based on the functional $T_c$, if the copula generator $\psi_1$ is $d$-monotone with $d\geq k+3$ then $X_{n-k:n}(\balpha)\leqst X_{n-k:n}(\bbeta)$.
\end{proposition}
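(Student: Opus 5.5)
This follows at once from Theorem~\ref{thm:heterogeneous-coordinatewise-comparison}, assertion 2 of its final part, applied to the functional $T=T_c$. The plan is to check the theorem's hypotheses and then read off the conclusion.

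\textbf{Copula and differentiability hypotheses.} The theorem requires the conditions of Corollary~\ref{heterogeneous-sign}. These reduce to the following:
\begin{enumerate}[(i)]
\item the joint distribution is given by an Archimedean copula with generator $\psi_1$, which is $d$-monotone with $d\geq k+3$;
\item $T_c$ is differentiable in its first argument;
\item $\phi_1=\psi_1^{-1}$ is differentiable on the relevant interval.
\end{enumerate}
Condition (i) is assumed. For (ii), $T_c(\beta,F)=F^\beta=e^{\beta\log F}$ is smooth in $\beta>0$ wherever $0<F(x)<1$, with $\frac{\partial T_c}{\partial\beta}=T_c\log F$. For (iii), since $d\geq k+3\geq 4$, $\psi_1$ is at least twice differentiable with $\psi_1^\prime\leq 0$. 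Differentiability of $\phi_1$ then holds wherever $\psi_1^\prime\neq 0$, and I treat this implicitly as part of the standing assumptions, as the paper does in Proposition~\ref{heterogeneous-derivative}.

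\textbf{Monotonicity of $T_c$.} For $x$ with $0<F(x)<1$ we have $\log F(x)<0$, so $\frac{\partial T_c}{\partial\beta}=T_c\log F<0$. Equivalently, for fixed $u\in(0,1)$ the map $\beta\mapsto u^\beta$ is decreasing. Hence $\alpha\le\beta$ implies $T_c(\alpha,F)(x)\ge T_c(\beta,F)(x)$ for every $x$. At points where $F(x)\in\{0,1\}$ both sides coincide, being $0$ or $1$, so the inequality still holds.

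\textbf{Conclusion and main obstacle.} By Corollary~\ref{heterogeneous-sign}, $\frac{\partial}{\partial\alpha_\ell}F^{(\balpha)}_{n-k:n}(x)\le 0$ for every $\ell$. The telescoping through the intermediate vectors $\boldsymbol{\gamma}^{(m)}$ used in the proof of Theorem~\ref{thm:heterogeneous-coordinatewise-comparison} then gives $F^{(\balpha)}_{n-k:n}(x)\ge F^{(\bbeta)}_{n-k:n}(x)$ for all $x$, that is, $X_{n-k:n}(\balpha)\leqst X_{n-k:n}(\bbeta)$.

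The only delicate point is the boundary case, where some $T_c(\alpha_i,F)(x)\in\{0,1\}$. There Proposition~\ref{prop:heterogeneous-n-k} is not directly stated. However, $F(x)\in\{0,1\}$ is common to all components, so both distribution functions equal $0$ or both equal $1$, and the inequality is trivial.
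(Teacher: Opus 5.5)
Your proposal is correct and is the paper's argument: the paper states this as an immediate consequence of Theorem~\ref{thm:heterogeneous-coordinatewise-comparison}, assertion 2, using that $T_c$ is decreasing in its first argument since $\partial T_c/\partial\beta = T_c\log F<0$. Your explicit checks of the differentiability hypotheses and of the boundary points where $F(x)\in\{0,1\}$ fill in details the paper leaves implicit.
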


\subsection{The odds-Marshall-Olkin model}
The odds-Marshall-Olkin (oMO) model, introduced in \cite{AHO2025}, defines a flexible class of distributions obtained by combining the proportionality of the odds function with the proportionality of the log-odds function, defining a parametric class of distributions constructed from a baseline distribution. Recall that the odds function of the distribution function $F$ is $\Lambda_F(x)=\frac{F(x)}{\Fbar(x)}$. The distributions in the oMO class, denoted in \cite{AHO2025} as $G_{\beta,\theta}$, where $\beta,\,\theta>0$ are defined by $\Lambda_{\nsG}=\beta\Lambda_F^\theta$, which allows for the explicit representation
\begin{equation}
\label{eq:T}
\nsG(x) = \ToMO(\beta,F)=\dfrac{\beta F^\theta(x)}{\beta F^\theta(x) + \Fbar^\theta(x)}.
\end{equation}
This construction extends the classical Marshall-Olkin proportional odds model ($\theta=1$) and also includes proportional log-odds models ($\beta=1$). 
The two parameters allow separate control over scale and curvature of the odds function, thereby offering flexibility in skewness, tail behaviour and growth rate of the hazard.

In what follows, we derive conditions for stochastic dominance between parallel or series systems with heterogeneous components whose lifetimes follow some oMO distribution. Note that, as the odds function ranges from 0 to $+\infty$, distributions within the oMO class are comparable only when they share the same value for the parameter $\theta$. For this reason, we shall concentrate on the variation with respect to the parameter $\beta$. 

We will characterise the stochastic dominance between parallel systems when the joint distribution of the components lifetimes is described by each of the copula families, identifying the range of parameters for which it holds. Note that $\tfrac{\partial \ToMO}{\partial\beta}=\tfrac{\ToMO^2}{\beta^2}\left(\tfrac{\Fbar}{F}\right)^\theta$, so $\ToMO$ is increasing with respect to $\beta>0$. Therefore, the following dominance result is a straightforward consequence of Theorem~\ref{thm:main0}.
\begin{proposition}
{With marginal distributions based on the functional $\ToMO$, copula generators such that $\phi_2\circ\psi_1$ is super-additive and parameters $\balpha$ and $\bbeta$ satisfying $\alpha_i\leq\beta_i$, for $i=1,\ldots,n$, we have that $X_{n:n}(\balpha)\geqst Y_{n:n}(\bbeta)$.}
\end{proposition}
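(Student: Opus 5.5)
The plan is to apply assertion 1.\ of Theorem~\ref{thm:main0} with $T=\ToMO$, so the only thing left to check is that $\ToMO(\beta,F)(x)$ is increasing in $\beta$ for every fixed $x$.

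I would verify this monotonicity directly rather than through the derivative, so that the endpoints $F(x)\in\{0,1\}$ cause no trouble. Fix $x$. If $F(x)=0$, then $\ToMO(\beta,F)(x)=0$ for every $\beta$. If $F(x)=1$, then $\Fbar(x)=0$ and $\ToMO(\beta,F)(x)=1$ for every $\beta$. In both cases the map is constant, hence nondecreasing.

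Now suppose $0<F(x)<1$ and set $r=\bigl(\Fbar(x)/F(x)\bigr)^\theta\in(0,\infty)$. Then
\[
\ToMO(\beta,F)(x)=\frac{\beta}{\beta+r}=1-\frac{r}{\beta+r},
\]
which is strictly increasing in $\beta>0$. This agrees with the derivative formula $\tfrac{\partial \ToMO}{\partial\beta}=\tfrac{\ToMO^2}{\beta^2}\bigl(\tfrac{\Fbar}{F}\bigr)^\theta\ge 0$ noted before the statement. So $\alpha\le\beta$ implies $\ToMO(\alpha,F)(x)\le\ToMO(\beta,F)(x)$ for every $x\in\mathbb{R}$, with $\theta$ held fixed and common to all components.

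Since $\alpha_i\le\beta_i$ for every $i$ and $\phi_2\circ\psi_1$ is super-additive, all hypotheses of assertion 1.\ in Theorem~\ref{thm:main0} hold. Concretely, Lemma~\ref{cop} gives $C_{\psi_1}\le C_{\psi_2}$. Coordinatewise monotonicity of $C_{\psi_2}$ combined with the monotonicity of $\ToMO$ then gives $F_{X_{n:n}}^{(\balpha)}\le F_{Y_{n:n}}^{(\bbeta)}$ pointwise, which is exactly $X_{n:n}(\balpha)\geqst Y_{n:n}(\bbeta)$.

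The only potential obstacle is handling the boundary values of $F$, and the case split above disposes of it. I expect no genuine difficulty.
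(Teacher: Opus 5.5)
Your proposal is correct and matches the paper's proof: the paper observes that $\partial \ToMO/\partial\beta=\tfrac{\ToMO^2}{\beta^2}\bigl(\Fbar/F\bigr)^\theta\ge 0$, so $\ToMO$ is increasing in $\beta$, and then applies assertion 1.\ of Theorem~\ref{thm:main0}. Your rewriting $\ToMO(\beta,F)(x)=\beta/(\beta+r)$, together with the separate treatment of $F(x)\in\{0,1\}$, is the same monotonicity check done a little more carefully at the boundary.
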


\begin{proposition}
\label{prop:parallel}
{With marginal distributions based on the functional $\ToMO$ and copula generators are such that $\phi_2\circ\psi_1$ is super-additive and $\psi_2$ is $d$-monotone with $d\geq 3$, and parameters satisfying $\balpha\overset{m}\succeq\bbeta$,} assume any of the following conditions is fulfilled:
    \begin{enumerate}
    \item
    (Clayton copula) $\psi_2$ is the Clayton generator with $\gamma_2\geq1$,
    \item
    (Frank copula) $\psi_2$ is the Frank generator with $\gamma_2\geq 0$,
    \item
    (Gumbel copula) $\psi_2$ is the Gumbel generator with $\gamma_2\geq1$,
    \item
    (AMH copula) $\psi_2$ is the AMH generator with $\gamma_2\in(-1,1)$,
    \end{enumerate}
then {$X_{n:n}(\balpha)\geqst Y_{n:n}(\bbeta)$}.
\end{proposition}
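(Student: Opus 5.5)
The plan is to apply Theorem~\ref{thm:main1} with $T=\ToMO$. Its hypotheses other than the monotonicity of $V$ are assumed in the statement: super-additivity of $\phi_2\circ\psi_1$, $d$-monotonicity of $\psi_2$ with $d\geq3$, and $\balpha\overset{m}\succeq\bbeta$. So everything reduces to showing that the function $V$ in (\ref{eq:V-inc}) is increasing in $\beta>0$ for each of the four copula families.

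\textbf{Simplifying $\partial\ToMO/\partial\beta$.} The first step is to use the defining relation of the oMO class, $\ToMO/(1-\ToMO)=\beta\Lambda_F^\theta$. It gives $(\Fbar/F)^\theta=\beta(1-\ToMO)/\ToMO$, and substituting into the derivative computed above yields
$$
\frac{\partial \ToMO}{\partial\beta}=\frac{\ToMO(1-\ToMO)}{\beta}.
$$
Hence
$$
V(\beta)=\frac{1}{\beta}\,S(\ToMO(\beta,F)),\qquad S(x)=\frac{x(1-x)}{\psi_2^\prime\circ\phi_2(x)},\quad x\in(0,1).
$$

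\textbf{Reduction to a one-variable condition.} Since $\psi_2$ is decreasing, $S<0$, so I write $V(\beta)=-\frac{1}{\beta}\,|S|(\ToMO(\beta,F))$. The factor $1/\beta$ is positive and decreasing, and $\ToMO$ is increasing in $\beta$. It therefore suffices to show that $|S|(x)$ is decreasing on $(0,1)$. Then $\beta\mapsto\frac1\beta|S|(\ToMO)$ is a product of two positive decreasing functions, hence decreasing, and $V$ is increasing.

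\textbf{Case checks.} Using Table~\ref{tab:Arch-fs} for $\psi_2^\prime\circ\phi_2$:
\begin{enumerate}
\item Clayton: $|S|(x)=(1-x)x^{-\gamma_2}$. Both factors are decreasing for $\gamma_2\geq0$, so in particular for $\gamma_2\geq1$.
\item Gumbel: $|S|(x)=\gamma_2(1-x)(-\log x)^{\gamma_2-1}$. This is again a product of nonnegative decreasing factors when $\gamma_2\geq1$.
\item AMH: $|S|(x)=\frac{(1-\gamma_2)(1-x)}{1-\gamma_2+\gamma_2x}$. Its derivative has the sign of $-(1-\gamma_2+\gamma_2x)-\gamma_2(1-x)=-1<0$, for every $\gamma_2\in(-1,1)$.
\item Frank: the table entry gives $|S|(x)=\gamma_2\,x(1-x)/(e^{\gamma_2x}-1)$. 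For $\gamma_2>0$ this is $(1-x)$ times $\gamma_2x/(e^{\gamma_2x}-1)$. The latter is decreasing because $y\mapsto y/(e^y-1)$ is decreasing on $(0,\infty)$; equivalently, $e^y(1-y)-1\leq0$. The boundary value $\gamma_2=0$ is handled as the independence limit, where $|S|(x)=1-x$.
\end{enumerate}

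\textbf{Expected obstacle.} The main difficulty is the Frank case. Its table entry for $\psi^\prime\circ\phi$ should be double-checked against the generator in Table~\ref{tab:archimedean}, including sign conventions for $\gamma_2$. If the correct expression differs, the plan is to recompute $|S|$ directly and again reduce monotonicity to the elementary inequality $e^y(1-y)\leq1$. The other three cases are routine sign checks.
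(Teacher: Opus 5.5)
Your proof is correct and has the same backbone as the paper's: apply Theorem~\ref{thm:main1}, then check family by family that $V$ is increasing in $\beta$. The paper works with $V^\ast$ from Corollary~\ref{cor:main1}, but $V^\ast\equiv V$, because $K_2\circ\phi_2(u)/u=-1/\psi_2^\prime\circ\phi_2(u)$.

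The difference lies in the reduction. The paper writes $V^\ast$ explicitly in terms of $\beta$ and $\ToMO$ (or $F^\theta$, $\Fbar^\theta$) and argues separately in each case:
\begin{itemize}
\item for Clayton it reads off $-\ToMO^{1-\gamma_2}/\beta^2$, up to the positive factor $(\Fbar/F)^\theta$, and this is where $\gamma_2\geq1$ is used;
\item for Gumbel it needs a log-derivative estimate resting on $0\leq\partial_\beta\log\ToMO\leq 1/\beta$.
\end{itemize}
Your identity $\partial\ToMO/\partial\beta=\ToMO(1-\ToMO)/\beta$ factors $V$ as $\frac1\beta S(\ToMO)$. This reduces all four cases to the monotonicity of a single function on $(0,1)$. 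It is the device the paper itself uses for the PHR and PRHR models (via $S_e$ and $S_c$), but not for the oMO model.

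What your route buys:
\begin{itemize}
\item Gumbel becomes a product of monotone factors, with no log-derivative bound needed.
\item Clayton holds for every $\gamma_2\geq0$, a wider range than stated.
\end{itemize}
The price is that requiring $|S|$ to be decreasing ignores the help of the factor $1/\beta$. Your criterion is therefore sufficient but not sharp: a direct computation shows that for Clayton, $V$ is increasing for every $\gamma_2\geq-1$.

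On your Frank concern: the entry $\psi^\prime\circ\phi(t)=\frac1\gamma(1-e^{\gamma t})$ that you used is correct. The slips in that row of Table~\ref{tab:Arch-fs} are in the $\psi^\prime$ and $K$ columns. So $|S|(x)=\gamma_2x(1-x)/(e^{\gamma_2x}-1)$ and your argument via $e^y(1-y)\leq1$ stand.

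The paper's displayed $V^\ast$ for Frank is itself off. The correct expression is $-\gamma_2F^\theta\Fbar^\theta/\bigl((e^{\gamma_2\ToMO}-1)(\beta F^\theta+\Fbar^\theta)^2\bigr)$. For this expression the paper's ``increasing denominator'' argument still works, so both routes reach the same conclusion.
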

\begin{proof}
For each considered copula we will apply Corollary~\ref{cor:main1}, verifying that in every case $V^\ast$ is increasing with respect to $\beta>0$. 
    \begin{enumerate}
    \item
    (Clayton copula) In this case we have $K_2(x)=-\tfrac{\psi_2(x)}{\psi_2^\prime(x)}=1+\gamma_2 x$, hence, $V^\ast(\beta)=-\tfrac{1}{\beta^2}\ToMO^{1-\gamma_2}(\beta,F)=-\tfrac{1}{\beta^2}\nsG^{1-\gamma_2}$. A simple differentiation shows that $V^\ast$ is increasing with respect to $\beta$, concluding the proof.
    
    \item
    (Frank copula) We now have
    $$
    K_2(x)=\frac{1+e^{-x}(e^{-\gamma_2}-1)}{e^{-x}(e^{-\gamma_2}-1)}\log\left(1+e^{-x}(e^{-\gamma_2}-1)\right),
    $$
    hence, using (\ref{eq:K2-inc}) and replacing (\ref{eq:T}), we find that
    $$
    V^\ast(\beta)=-\frac{1}{\left(1-e^{-\gamma_2 \ToMO(\beta,F)}\right)\left(\beta F^\theta+\Fbar^\theta\right)^2}\frac{F^{3\theta}}{\Fbar^\theta}.
    $$
    As we are assuming that $\gamma_2>0$ and $\ToMO$ is increasing with respect to $\beta>0$, the denominator is clearly increasing, so the same monotonicity holds for $V^\ast$. 
    
    \item
    (Gumbel copula) For this copula choice, we have $K_2(x)=-\tfrac{\psi_2(x)}{\psi_2^\prime(x)}=\tfrac{x^{1-a}}{a}$, where $a=\tfrac1{\gamma_2}$, so 
    $$
    V^\ast(\beta)=-\frac{\ToMO(\beta,F)}{\beta^2}\Bigl(-\log \ToMO(\beta,F)\Bigr)^{\frac{1-a}a}\frac{\Fbar^\theta}{aF^\theta}.
    $$
    Set $Q(\beta) = \tfrac{u(-\log u)^{\frac{1-a}{a}}}{\beta^2}=-V^\ast(\beta)$, where $u = \ToMO(\beta,F)=\nsG$. Noting that $\log Q(\beta) = \log u+\tfrac{1-a}{a}\log(-\log u) - 2\log \beta$, it follows, with $k = \frac{1-a}{a}=\gamma_2-1\geq0$,
    $$
    \dfrac{d\log Q(\beta)}{d \beta} = \dfrac{u^\prime}{u}+ \frac{1-a}{a}\left (\dfrac{u^\prime}{u}\right )\left (\dfrac{1}{\log u}\right )-\dfrac{2}{\beta} =  \dfrac{u^\prime}{u}\left (1+\dfrac{k}{\log u}\right ) - \dfrac{2}{\beta}
    \leq \dfrac{u^\prime}{u} -\dfrac{2}{\beta}.
    $$
    Notice that 
    \[
    0\leq \dfrac{u^\prime}{u} = \dfrac{1}{\nsG}\dfrac{d\nsG}{d \beta} = \dfrac{\Fbar^\theta}{\beta\left(\beta F^\theta + \Fbar^\theta\right)}\leq \dfrac{1}{\beta}
    \]
    and therefore,
    \[
    \dfrac{Q^\prime(\beta)}{Q(\beta)} \leq \dfrac{u^\prime}{u} -\dfrac{2}{\beta} \leq -\dfrac{1}{\beta}<0.
    \]
    Since $Q(\beta)>0$ we conclude that $Q(\beta)$ is decreasing in $\beta$, hence $V^\ast(\beta)=-Q(\beta)$ is increasing. 
    
    \item
    (AMH copula) We have $K_2(x)=1-\gamma_2 e^{-x}$ so, after simplification, and replacing  with (\ref{eq:T}),
    $$
    V^\ast(\beta)=\frac{-(1-\gamma_2)}{1-\gamma_2(1-\ToMO(\beta,F))}\tfrac{\ToMO(\beta,F)}{\beta^2}
    =\frac{-(1-\gamma_2)F^\theta}{\beta\left(\beta F^\theta+(1-\gamma_2)\Fbar^\theta\right)},
    $$
    which is clearly increasing with respect to $\beta>0$. 
    \end{enumerate}
\end{proof}

\begin{example}
We showed in Example~\ref{ex:fams} that the super-additivity assumption is fulfilled composing two different copulas within the same family. We now show that the Gumbel family allows for some more flexibility on the choice of the copula $\psi_1$. Choose $\psi_1(t) = \exp\!\left(-t - c t^2\right)$, for some $c > 0$. Clearly, $\psi_1$ is decreasing, $\psi_1(0)=1$ and $\psi_1(t)\to 0$ as $t\to\infty$. Moreover, it is completely monotone and therefore defines a valid Archimedean copula. Then,
\[
\phi_2 \circ \psi_1(t)= \bigl(-\log \psi_1(t)\bigr)^{\gamma_2} = (t + c t^2)^{\gamma_2}.
\]
Taking $\gamma_2>1$, this is an increasing and convex transformation of a super-additive function, hence it is super-additive.
\end{example}


\begin{example}
Concerning the super-additivity assumption in Proposition~\ref{prop:parallel} when using a AMH generator, we have already verified that we may choose for $\psi_1$ the independent copula or another copula within the AMH family. We show next that we may also take $\psi_1$ within the Gumbel class.
Let $\psi_1(t)=\exp\big(-t^{1/\alpha}\big)$, $\alpha\ge 1$, be the generator function of the Gumbel family of copulas, and assume $\gamma_2>0$. Then
$$
\phi_2\circ\psi_1(t)
	=\log\left(\frac{1-\gamma_2(1-e^{-t^{1/\alpha}})}{e^{-t^{1/\alpha}}}\right)
	=\log\left((1-\gamma_2)e^{t^{1/\alpha}}+\gamma_2\right)
	=:g_\alpha(t).
$$
For $x,y\ge 0$ we again reduce super-additivity to an inequality without logarithms:
$$
g_\alpha(x+y)\ge g_\alpha(x)+g_\alpha(y)
\quad\Leftrightarrow\quad
(1-\gamma_2)e^{(x+y)^{1/\alpha}}+\gamma_2
\ge
\big((1-\gamma_2)e^{x^{1/\alpha}}+\gamma_2\big)\big((1-\gamma_2)e^{y^{1/\alpha}}+\gamma_2\big).
$$
Expanding the product and rearranging, it suffices to show
$$
(1-\gamma_2)e^{(x+y)^{1/\alpha}}-(1-\gamma_2)^2 e^{x^{1/\alpha}+y^{1/\alpha}}
\ge
\gamma_2(1-\gamma_2)\big(e^{x^{1/\alpha}}+e^{y^{1/\alpha}}-1\big).
$$
Since $\alpha\ge 1$, the map $t\mapsto t^{1/\alpha}$ is subadditive, hence
\[
(x+y)^{1/\alpha}\le x^{1/\alpha}+y^{1/\alpha}
\quad\Rightarrow\quad
e^{(x+y)^{1/\alpha}}\le e^{x^{1/\alpha}+y^{1/\alpha}}.
\]
Therefore,
\begin{eqnarray*}
	\lefteqn{(1-\gamma_2)e^{(x+y)^{1/\alpha}}+\gamma_2
		-\big((1-\gamma_2)e^{x^{1/\alpha}}+\gamma_2\big)\big((1-\gamma_2)e^{y^{1/\alpha}}+\gamma_2\big)}\\[1.5ex]
	&\ge &
		\gamma_2(1-\gamma_2)\Big(e^{x^{1/\alpha}+y^{1/\alpha}}-e^{x^{1/\alpha}}-e^{y^{1/\alpha}}+1\Big)
		=\gamma_2(1-\gamma_2)\big(e^{x^{1/\alpha}}-1\big)\big(e^{y^{1/\alpha}}-1\big)\ge 0,
\end{eqnarray*}
which proves that $g_\alpha$ is super-additive.
\end{example}


We now derive conditions and the range of variability of the copula parameters that allow for stochastic domination between the lifetime of series systems with dependent components. Recall that we obtained the representation for the tail of the distribution of the system lifetime $X_{1:n}(\balpha)=\min\{X_1,\ldots,X_n\}$ given in (\ref{eq:minX}).
Given the increasingness of $\ToMO$ with respect to $\beta>0$, we immediately have the following result.
\begin{proposition}
{With marginal distributions based on the functional $\ToMO$, copula generators such that $\phi_2\circ\psi_1$ is super-additive and parameters $\balpha$ and $\bbeta$ satisfying $\alpha_i\leq\beta_i$, for $i=1,\ldots,n$, we have that $X_{1:n}(\balpha)\geqst Y_{1:n}(\bbeta)$.}
\end{proposition}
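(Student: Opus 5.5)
The plan is to deduce the statement from the general comparison for series systems in Theorem~\ref{thm:main0min}, working directly with the survival-copula representation (\ref{eq:minX}). There are two ingredients. The first is the monotonicity of $\ToMO$ in its first argument. The second is the copula comparison supplied by Lemma~\ref{cop} under the hypothesis, as stated, that $\phi_2\circ\psi_1$ is super-additive.

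\textbf{Step 1 (monotonicity of the tails).} From $\tfrac{\partial \ToMO}{\partial\beta}=\tfrac{\ToMO^2}{\beta^2}\bigl(\tfrac{\Fbar}{F}\bigr)^\theta\geq 0$, the function $\Tbar_{o}(\beta,F)=1-\ToMO(\beta,F)$ is decreasing in $\beta$. Hence $\alpha_i\le\beta_i$ gives $\Tbar_o(\alpha_i,F)(x)\ge\Tbar_o(\beta_i,F)(x)$ for every $x$ and every $i$. For a fixed generator $\psi$, the map $(v_1,\ldots,v_n)\mapsto\psi\bigl(\sum_i\phi(v_i)\bigr)$ is a copula, hence coordinatewise increasing. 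The coordinatewise comparison of the arguments therefore transfers directly to the series-system tails $\widetilde{h}$ built on that same generator.

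\textbf{Step 2 (copula comparison).} Apply Lemma~\ref{cop} with the stated hypothesis that $\phi_2\circ\psi_1$ is super-additive. This gives $C_{\psi_1}(\boldsymbol v)\le C_{\psi_2}(\boldsymbol v)$ for all $\boldsymbol v\in[0,1]^n$. Evaluated at the vector of tails $\bigl(\Tbar_o(\cdot,F)(x)\bigr)$, this compares the survival function of the $\mathbf X$-system with that of a system having the same marginals but generator $\psi_2$.

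\textbf{Step 3 (chaining).} I would then assemble the two inequalities exactly as in the proof of Theorem~\ref{thm:main0} and Theorem~\ref{thm:main0min}. One inequality changes the copula at fixed parameters; the other changes the parameters $\alpha_i\to\beta_i$ at a fixed copula, using Step~1. Together they produce the ordering of $\Fbar^{(\balpha)}_{1:n}(x)$ against $\Fbar^{(\bbeta)}_{1:n}(x)$ pointwise in $x$, that is, the claimed stochastic order between $X_{1:n}(\balpha)$ and $Y_{1:n}(\bbeta)$ in the sense of Definition~\ref{def:st}. No differentiability or $d$-monotonicity beyond that needed for Lemma~\ref{cop} is used, since no Schur argument is involved.

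\textbf{Main obstacle.} The delicate point is Step~3: the two inequalities must point the same way so that they chain. The copula step under super-additivity of $\phi_2\circ\psi_1$ pushes the $\psi_2$-tail up. The parameter step, with $\ToMO$ increasing, pushes the tail down as $\alpha_i\to\beta_i$.

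I would first try ordering the chain as $\widetilde h(\bbeta;\psi_2)$ versus $\widetilde h(\balpha;\psi_2)$ versus $\widetilde h(\balpha;\psi_1)$. I would check carefully, at this point, which orientation of Theorem~\ref{thm:main0min} is actually being invoked, since that theorem's assertion~2 is the one matching an increasing $T$. If the two monotonicities turn out to oppose each other, the fallback is to state the copula comparison for the survival representation as the paper intends. That means checking whether, for the relevant generators, the super-additivity hypothesis yields the inequality in the direction needed for the conclusion $X_{1:n}(\balpha)\geqst Y_{1:n}(\bbeta)$.
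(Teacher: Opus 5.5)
There is a genuine gap, and it sits exactly where you flagged it. With the hypothesis as printed, Step~3 cannot be closed. Lemma~\ref{cop} with $\phi_2\circ\psi_1$ super-additive gives $\widetilde{h}(\boldsymbol{\gamma};\psi_1)\le\widetilde{h}(\boldsymbol{\gamma};\psi_2)$ at any fixed parameter vector $\boldsymbol{\gamma}$. Step~1 gives $\widetilde{h}(\bbeta;\psi)\le\widetilde{h}(\balpha;\psi)$ at any fixed generator. So the two tails you want to compare, $\widetilde{h}(\balpha;\psi_1)$ and $\widetilde{h}(\bbeta;\psi_2)$, are both bounded above by $\widetilde{h}(\balpha;\psi_2)$ and both bounded below by $\widetilde{h}(\bbeta;\psi_1)$. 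Neither pivot orders them.

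Your fallback does not work either. Lemma~\ref{cop} is one-directional: super-additivity of $\phi_2\circ\psi_1$ can only ever give $C_{\psi_1}\le C_{\psi_2}$. In fact the proposition is false as printed. Take $n=2$, $\theta=1$, $\balpha=\bbeta=(1,1)$, so both marginals equal $F$. Take $\psi_1(t)=e^{-t}$ and $\psi_2(t)=(1+t)^{-1}$ (Clayton, $\gamma_2=1$); then $\phi_2\circ\psi_1(t)=e^t-1$ is convex and null at $0$, hence super-additive. Now $\widetilde{h}(\balpha;\psi_1)=\Fbar^2$ and $\widetilde{h}(\bbeta;\psi_2)=\Fbar/(2-\Fbar)$. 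Since $1-\Fbar(2-\Fbar)=(1-\Fbar)^2>0$, we get $\Fbar^2<\Fbar/(2-\Fbar)$ whenever $0<\Fbar<1$, so $X_{1:2}(\balpha)\geqst Y_{1:2}(\bbeta)$ fails.

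The paper's one-line justification (``given the increasingness of $\ToMO$'') is assertion~2 of Theorem~\ref{thm:main0min}. That assertion assumes $\phi_1\circ\psi_2$ is super-additive, exactly as in the corresponding PHR statement, so the $\phi_2\circ\psi_1$ here is a typo. Under the corrected hypothesis, Lemma~\ref{cop} gives $C_{\psi_2}\le C_{\psi_1}$. The chain then closes by pivoting through generator $\psi_1$ at parameter $\bbeta$:
\[
\widetilde{h}(\bbeta;\psi_2)\le\widetilde{h}(\bbeta;\psi_1)\le\widetilde{h}(\balpha;\psi_1).
\]
The second inequality comes from $\Tbar_o(\beta_i,F)\le\Tbar_o(\alpha_i,F)$ and the coordinatewise monotonicity of $C_{\psi_1}$. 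Your Steps~1 and~2 are the right ingredients. What is missing is recognising that the copula hypothesis must be reversed, not reinterpreted.
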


\begin{proposition}
\label{prop:seriesoMO}
{With marginal distributions based on the functional $\ToMO$, $\psi_1$ the Clayton generator with $\gamma_1\leq 1$, $\psi_2$ a copula generator such that $\phi_2\circ\psi_1$ is super-additive and $\psi_2$ is $d$-monotone with $d\geq 3$, and parameters satisfying $\bbeta\overset{m}\succeq\balpha$,} 
then {$X_{1:n}(\balpha)\leqst Y_{1:n}(\bbeta)$}.
\end{proposition}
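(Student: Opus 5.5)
The plan is to apply Theorem~\ref{thm:main1min}, in the form of Corollary~\ref{cor:main1min}, with $\psi_1$ the Clayton generator. The super-additivity of $\phi_2\circ\psi_1$ and the majorization $\bbeta\overset{m}\succeq\balpha$ are assumed, so the only thing left is a monotonicity condition. For $\Tbar_{o}=1-\ToMO$ we need $\overline{V}$ in (\ref{eq:V-inc-min}) to be decreasing in $\beta>0$.

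First I compute the ingredients. We have
$$
\Tbar_o(\beta,F)=\frac{\Fbar^\theta}{\beta F^\theta+\Fbar^\theta},
\qquad
\frac{\partial\Tbar_o}{\partial\beta}=-\frac{\partial\ToMO}{\partial\beta}=-\frac{F^\theta\Fbar^\theta}{(\beta F^\theta+\Fbar^\theta)^2}=-\frac{F^\theta}{\Fbar^\theta}\,\Tbar_o^2 .
$$
From Table~\ref{tab:Arch-fs}, for the Clayton generator $\psi_1^\prime\circ\phi_1(x)=-x^{\gamma_1+1}$. Substituting gives
$$
\overline{V}(\beta)=\frac{-\tfrac{F^\theta}{\Fbar^\theta}\Tbar_o^2}{-\Tbar_o^{\gamma_1+1}}=\frac{F^\theta}{\Fbar^\theta}\,\Tbar_o(\beta,F)^{1-\gamma_1}.
$$
The same expression comes out of (\ref{eq:K1-dec}) with $K_1(x)=1+\gamma_1x$, which serves as a consistency check.

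Next I read off the monotonicity. For fixed $x$, the factor $F^\theta/\Fbar^\theta$ is positive and does not depend on $\beta$. Since $\ToMO$ is increasing in $\beta$, $\Tbar_o(\beta,F)$ is positive and decreasing in $\beta$. Because $\gamma_1\le 1$, the map $s\mapsto s^{1-\gamma_1}$ is nondecreasing on $(0,1]$ (it is constant when $\gamma_1=1$). Hence $\overline{V}$ is decreasing in $\beta$.

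Theorem~\ref{thm:main1min} now applies directly: $\widetilde h(\cdot;\psi_1)$ is Schur-convex, so $\widetilde h(\bbeta;\psi_2)\ge\widetilde h(\bbeta;\psi_1)\ge\widetilde h(\balpha;\psi_1)$, which is $X_{1:n}(\balpha)\leqst Y_{1:n}(\bbeta)$.

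The main obstacle is the hypothesis bookkeeping rather than the computation. Theorem~\ref{thm:main1min} asks for $\psi_1$ to be $d$-monotone with $d\ge3$, whereas the statement puts this on $\psi_2$. For $\psi_1$ this holds automatically when $0<\gamma_1\le1$, since the Clayton generator is then completely monotone; for $\gamma_1<0$ it requires $\gamma_1\ge-1/(d-1)$. I will note that this $d$-monotonicity is exactly what legitimates Clayton $\psi_1$ as an $n$-dimensional Archimedean generator, so it is implicitly required. Points where $F(x)\in\{0,1\}$ are trivial and will be treated separately.
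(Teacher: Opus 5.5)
Your proof is correct and follows the paper's route: you apply Theorem~\ref{thm:main1min} with the Clayton $\psi_1$ and obtain $\overline{V}(\beta)=(F/\Fbar)^\theta\,\Tbar_o^{1-\gamma_1}$, which the paper phrases as $\overline{S}_o(x)=x^2/\psi_1^\prime\circ\phi_1(x)=-x^{1-\gamma_1}$ being decreasing, and you conclude from $\Tbar_o$ decreasing in $\beta$ and $1-\gamma_1\geq 0$. Your remark that the regularity hypothesis is needed on $\psi_1$ rather than $\psi_2$ is a fair point the paper passes over; note only that your ``implicitly required'' justification covers $n\geq 3$, while what the argument actually uses is continuous differentiability of $\psi_1$, which the Clayton generator has for every $\gamma_1>-1$.
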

\begin{proof}
Similarly to the previous result, according to Theorem~\ref{thm:main1min}, we shall verify that in each case $\overline{V}$ is decreasing with respect to $\beta>0$. We now have $\tfrac{\partial\Tbar_o}{\partial\beta}=-\Tbar_o^2\tfrac{F^\theta}{\Fbar^\theta}$, hence, $\Tbar_o$ is decreasing with $\beta>0$. Therefore,
$$
\overline{V}(\beta)=\frac{-\Tbar_o^2}{\psi_1^\prime\circ\phi_1(\Tbar_o)}\left(\frac{F}{\Fbar}\right)^\theta,
$$
which is decreasing if $\overline{S}_{o}=\frac{x^2}{\psi_1^\prime\circ\phi_1(x)}$ is proven to be decreasing for $x\in[0,1]$. 
For the Clayton generator, according to Table~\ref{tab:Arch-fs}, we have $\overline{S}_o(x)=-x^{1-\gamma_1}$, which is decreasing when $x\in[0,1]$ and $\gamma_1\leq1$.
%
%
%
\end{proof}

The stochastic dominance between $(n-k)$-out-of-$n$ systems with joint distribution described by the same Archimedean copula is again an immediate consequence of Theorem~\ref{thm:heterogeneous-coordinatewise-comparison}. Taking into account that $\ToMO$ is increasing with respect to its first argument, the following statement is immediate.
\begin{proposition}
Let $1\leq k\leq n-1$, $\balpha=(\alpha_1,\ldots,\alpha_n)$ and $\bbeta=(\beta_1,\ldots,\beta_n)$, where $\alpha_i\leq\beta_i$, for every $i=1,\ldots,n$. With marginal distributions based on the functional $\ToMO$, if the copula generator $\psi_1$ is $d$-monotone with $d\geq k+3$ then $X_{n-k:n}(\balpha)\geqst X_{n-k:n}(\bbeta)$.
\end{proposition}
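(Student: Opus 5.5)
This is a direct application of Theorem~\ref{thm:heterogeneous-coordinatewise-comparison}, assertion 1, to the functional $T=\ToMO$. The only things to check are the standing hypotheses (those of Corollary~\ref{heterogeneous-sign}) and that $\ToMO$ is increasing in its first argument.

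\textbf{Differentiability of $\ToMO$.} From (\ref{eq:T}), $\ToMO(\beta,F)=\beta F^\theta/(\beta F^\theta+\Fbar^\theta)$ is a rational function of $\beta>0$ whose denominator is positive. It is therefore differentiable in $\beta$.

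\textbf{Monotonicity of $\ToMO$.} A direct computation gives
$$\frac{\partial \ToMO}{\partial\beta}=\frac{F^\theta\Fbar^\theta}{(\beta F^\theta+\Fbar^\theta)^2}=\frac{\ToMO^2}{\beta^2}\Bigl(\frac{\Fbar}{F}\Bigr)^\theta,$$
which is the expression already recorded above. This derivative is strictly positive at every $x$ with $0<F(x)<1$. So $\alpha\le\beta$ implies $\ToMO(\alpha,F)(x)\le\ToMO(\beta,F)(x)$ for all $x$. At points where $F(x)\in\{0,1\}$, both sides equal $0$ or $1$, so the inequality holds trivially.

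\textbf{Generator hypotheses.} The statement assumes $\psi_1$ is $d$-monotone with $d\ge k+3$, which contains the requirement $d\ge 3$ of Proposition~\ref{heterogeneous-derivative}. The inverse $\phi_1$ is differentiable on $(0,1)$ with $\phi_1'<0$, since $\psi_1$ is strictly decreasing and differentiable there with nonvanishing derivative. The auxiliary quantities $u_i(x)=\ToMO(\alpha_i,F)(x)$ lie in $(0,1)$ exactly when $0<F(x)<1$. At the remaining values of $x$, both $F^{(\balpha)}_{n-k:n}(x)$ and $F^{(\bbeta)}_{n-k:n}(x)$ equal $0$ or $1$, so they coincide trivially.

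\textbf{Conclusion.} Corollary~\ref{heterogeneous-sign} then shows that $\partial F^{(\balpha)}_{n-k:n}(x)/\partial\alpha_\ell\ge 0$ for each $\ell$. The telescoping chain through the vectors $\boldsymbol{\gamma}^{(m)}$ in Theorem~\ref{thm:heterogeneous-coordinatewise-comparison} gives $F^{(\balpha)}_{n-k:n}(x)\le F^{(\bbeta)}_{n-k:n}(x)$ for all $x$, that is, $X_{n-k:n}(\balpha)\geqst X_{n-k:n}(\bbeta)$.

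The only point needing care is the handling of the boundary values $x$ where $F(x)\in\{0,1\}$, and these are trivial; I expect no real obstacle.
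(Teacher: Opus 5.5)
Your proposal is correct and follows the paper's route exactly. The paper also obtains this as an immediate consequence of assertion 1 of Theorem~\ref{thm:heterogeneous-coordinatewise-comparison}, using only that $\ToMO$ is increasing in $\beta$ via $\partial\ToMO/\partial\beta=\ToMO^2\beta^{-2}(\Fbar/F)^\theta>0$; your extra checks (differentiability, $\phi_1'<0$, and the boundary points where $F(x)\in\{0,1\}$) are sound and just make explicit what the paper leaves implicit.
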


\bibliographystyle{abbrvnat-nourl} 
\bibliography{biblio}  

%
%
%
%
%
\end{document}